\DeclareFontFamily{OT1}{rsfs}{}
\DeclareFontShape{OT1}{rsfs}{n}{it}{<-> rsfs10}{}
\DeclareMathAlphabet{\curly}{OT1}{rsfs}{n}{it}
\newcommand{\sumskipp}[3]{
\sum_{#1\equiv #2} #3
}
\newcommand{\ind}{\operatorname{ind}}
\newcommand{\sumskip}[1]{\sumskipp{i}{u}{#1}}
\newcommand{\tr}{\operatorname{tr}}
\newcommand{\trw}{\operatorname{Tr}_w}
\newcommand{\Scal}{\operatorname{Scal}}
\newcommand{\orb}{\text{orb}} 
\newcommand{\hcf}{\operatorname{hcf}}
\newcommand{\norm}[1]{\lVert#1\rVert}
\newcommand\C{\mathbb C}
\newcommand\Q{\mathbb Q}
\newcommand\R{\mathbb R}
\newcommand\ZZ{\mathbb Z}
\newcommand\PP{\mathbb P}
\newcommand\WPP{\mathbb W\mathbb P}
\renewcommand\L{\mathcal L}
\renewcommand\O{\mathcal O}
\newcommand\I{\curly I}
\newcommand\m{\mathfrak m}
\newcommand\acts\curvearrowright
\newcommand\take\backslash
\newcommand\into\hookrightarrow
\newcommand\Lfloor{\left\lfloor}
\newcommand\Rfloor{\right\rfloor}
\newcommand{\vol}{\operatorname{vol}}
\newcommand{\Ric}{\operatorname{Ric}}
\renewcommand\_{^{\ }_}
\newcommand{\rk}{\operatorname{rank}}
\newcommand{\Id}{\operatorname{Id}}
\newcommand{\ev}{\operatorname{ev}}
\newcommand{\ord}{\operatorname{ord}}
\newcommand{\Aut}{\operatorname{Aut}}
\newcommand{\Proj}{\operatorname{Proj}\,}
\newcommand{\Spec}{\operatorname{Spec}\,}
\newcommand{\Hilb}{\operatorname{Hilb}}
\newcommand{\FS}{\operatorname{FS}}
\makeatletter \@addtoreset{equation}{chapter} \makeatother
\newtheorem{thm}[equation]{Theorem}
\newtheorem{lem}[equation]{Lemma}
\newtheorem{cor}[equation]{Corollary}
\newtheorem{prop}[equation]{Proposition}
\theoremstyle{definition}
\newtheorem{defn}[equation]{Definition}
\newtheorem{example}[equation]{Example}
\newtheorem{rmk}[equation]{Remark}
\newtheorem*{rmk*}{Remark}
\begin{document}
\pagestyle{myheadings}

\title{\vspace{-2cm} Weighted projective embeddings, stability of orbifolds and constant scalar curvature
K\"ahler metrics}
\author{Julius Ross and Richard Thomas}
\date{}

\hypersetup{
pdfauthor = {J. Ross and R. P. Thomas},
pdftitle = {Weighted projective embeddings, stability of orbifolds and constant scalar curvature
K\"ahler metrics},
pdfkeywords = {MSC  32Q15 53C55 14D20  14L24},
pdfcreator = {LaTeX with hyperref package}
}


\maketitle
\bibliographystyle{ross}
\setlength{\bibsep}{2pt}
\setlength{\absleftindent}{3mm}
\setlength{\absrightindent}{3mm}
\vspace{-12mm}
\begin{abstract} \vspace{-4mm}
We embed polarised orbifolds with cyclic stabiliser groups into weighted projective space via a weighted form of Kodaira embedding. Dividing by
the (non-reductive) automorphisms of weighted projective space then formally
gives a moduli space of orbifolds. We show how to express this as a \emph{reductive}
quotient and so a GIT problem, thus defining a notion of stability for orbifolds.

We then prove an orbifold version of Donaldson's theorem:
the existence of an orbifold K\"ahler metric
of constant scalar curvature implies K-semistability.

By extending the notion of slope stability to orbifolds we therefore get an explicit
obstruction to the existence of constant scalar curvature orbifold K\"ahler metrics.
We describe the manifold applications of this orbifold result, and show
how many previously known
results (Troyanov, Ghigi-Koll\'ar, Rollin-Singer, the AdS/CFT Sasaki-Einstein obstructions
of Gauntlett-Martelli-Sparks-Yau) fit into this framework.
\end{abstract}

\nobibintoc
\setlength{\cftbeforechapterskip}{0mm} 
\renewcommand{\tocheadstart}{}  
\renewcommand{\contentsname}{}  
\tableofcontents* 

\newpage
\chapter{Introduction}\label{chap:intro}

The problem of finding canonical K\"ahler metrics on complex manifolds is central in K\"ahler geometry.  Much of the recent work in this area centres around the conjecture of Yau, Tian and Donaldson that the existence of a constant scalar curvature
K\"ahler (cscK) metric should be equivalent to an algebro-geometric notion of stability.  This notion, called ``K-stability'', should be understood roughly as follows.  Suppose we are looking for such a metric on $X$ whose K\"ahler form lies in the first Chern class of an ample line bundle $L$. Then using sections of $L^k$ one can embed $X$ in a large projective space $\PP^{N_k}$ for $k\gg 0$, and stability is taken in a Geometric Invariant Theory (GIT) sense with respect to the automorphisms of these projective spaces as $k\to\infty$. By the Hilbert-Mumford criterion this in turn can be viewed as a statement about numerical invariants coming from one-parameter degenerations of $X$. The connection with metrics is through the Kempf-Ness theorem, that a stable orbit contains a zero of the moment map. Here this says that a (Chow) stable $X$ can be moved by an automorphism of $\PP^{N_k}$ to be \emph{balanced}, and then the restriction of the Fubini-Study metric on $\PP^{N_k}$
approximates a cscK metric for $k\gg0$.\medskip

In this paper we formulate and study a Yau-Tian-Donaldson correspondence for orbifolds.
On the algebro-geometric side this involves orbifold line bundles, embeddings in weighted
projective space, and a notion of stability for orbifolds. This is related in differential
geometry to orbifold K\"ahler metrics (those which pull back to a genuine K\"ahler metric
upstairs in an orbifold chart; downstairs these are K\"ahler metrics with cone angles
$2\pi/m$ about divisors with stabiliser group $\ZZ/m\ZZ$) and their scalar curvature.
So we restrict to the case of orbifolds with cyclic quotient singularities, but importantly
we do allow the possibility of orbifold structure in codimension one.

Our motivation is not the study of orbifolds per se, but their applications to manifolds.
Orbifold metrics are often the starting point for constructions of metrics on manifolds (see for instance \cite{ghigi_kollar:kaehl_einst_metric_orbif_einst_metric_spher}, and the gluing
construction of \cite{rollin_singer:05:non_minim_scalar_flat_kaehl}) or arise naturally
as quotients of manifolds (for instance quasi-regular Sasaki-Einstein metrics on odd
dimensional manifolds correspond to orbifold K\"ahler-Einstein metrics on the leaf space
of their Reeb vector fields). What first interested us in this subject was the remarkable
work of \cite{jerome:obstr_to_exist_sasak_einst_metric} finding new obstructions to the
existence of Ricci-flat cone metrics on cones over singularities, Sasaki-Einstein metrics on the links of the singularities,
and orbifold K\"ahler-Einstein metrics on the quotient. We wanted to understand their
results in terms of stability. In fact we found that most known results concerning orbifold cscK metrics could be understood through an extension of the ``slope stability" of
\cite{ross_thomas:06:obstr_to_exist_const_scalar,ross_thomas:07:study_hilber_mumfor_criter_for} to orbifolds.

The end product is a theory very similar to that of manifolds, but with a few notable differences requiring new ideas:

\begin{itemize}
\item Embedding an orbifold into projective space loses the information of the stabilisers, so instead we show how to embed them faithfully into weighted projective space.  This requires the correct notion of ampleness for an orbi-line bundle $L$, and we are forced to use sections of \emph{more than one} power $L^k$ -- in fact
at least as many as the order of the orbifold (defined in Section \ref{sec:orbibasics}). Then the relevant stability problem is taken not with respect to the full automorphism group of weighted projective space (which is not reductive) but with respect to its reductive part (a product of general linear groups).  This later quotient exactly reflects the ambiguity given by the choice of sections used in the embedding and, it turns out, gives the same moduli problem.
\item By considering the relevant moment maps we define the Fubini-Study K\"ahler metrics on weighted projective space required for stability.  A difference between this and the smooth case is that the curvature of the natural hermitian metric on the hyperplane line bundle is not the Fubini-Study K\"ahler metric, though we prove that the difference becomes negligible asymptotically.
\item A key tool connecting metrics of constant scalar curvature to stability is the asymptotic expansion of the Bergman kernel.   To ensure an expansion on orbifolds similar to that on manifolds we consider not just the sections of $L^k$ but sections of $L^{k+i}$ as $i$ ranges over one or more periods.  Moreover these sections must be taken with appropriate weights to ensure contributions from the orbifold locus add up to give a global expansion.  This is the topic of the companion paper \cite{ross_thomas:weigh_bergm_kernel_orbif}, which also contains a discussion of the exact weights needed.

This choice of weights can also be seen from the moment map framework.  The stability we consider is with respect to a product of unitary groups acting on a weighted projective space, and since the centraliser of this group is large the moment map is only defined up to some arbitrary constants.  These correspond exactly to the weights required for the Bergman kernel expansion, and  the main result of \cite{ross_thomas:weigh_bergm_kernel_orbif}
is that there is a choice of weights (and thus a choice of stability notion) that connects with scalar curvature.

\item The numerical invariants associated to orbifolds and their 1-parameter degenerations
are not polynomial but instead consist of a of polynomial ``Riemann-Roch'' term plus periodic terms coming from the orbifold strata.    The definition of the numerical invariants needed for stability (such as the Futaki invariant) will be made by normalising these periodic terms so they have average zero, and then only using the Riemann-Roch part. Then calculations involving stability become identical to the manifold case, only with the canonical divisor replaced with the orbifold canonical divisor.
\end{itemize}

After setting up this general framework, our main result is one direction of the Yau-Tian-Donaldson conjecture for orbifolds.

\begin{thm}\label{thm:cscKimpliesstability}
Let $(X,L)$ be a polarised orbifold with cyclic quotient singularities.  If
$c_1(L)$ admits an orbifold K\"ahler metric of constant scalar curvature
then $(X,L)$ is K-semistable.
\end{thm}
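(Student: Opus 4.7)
The plan is to adapt Donaldson's original proof of the manifold case (``Lower bounds on the Calabi functional'') to the weighted projective setting developed earlier in the paper. Given an orbifold cscK metric $\omega \in c_1(L)$, the goal is to show that for any test configuration $(\X,\LL)$ for $(X,L)$, the Donaldson--Futaki invariant is non-negative. I would proceed in two stages: first, show that the $L^2$ inner products on $\bigoplus_{i=0}^{r-1} H^0(L^{k+i})$ induced by $\omega$ give rise to weighted projective embeddings that are \emph{almost balanced} (with error tending to zero as $k\to\infty$) with respect to the moment map for the reductive group $G_k = \prod_{i=0}^{r-1} U(h^0(L^{k+i}))$; second, use Kempf--Ness to convert this into an asymptotic lower bound on the Mumford weight of the test configuration, which in the limit is the Futaki invariant.

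For the first stage, the key input is the weighted Bergman kernel asymptotic from the companion paper: with the prescribed weights assigned to each summand $H^0(L^{k+i})$, the associated Bergman function admits an expansion whose leading terms are globally defined, the periodic contributions from the orbifold strata cancelling to produce polynomial behaviour. Evaluating the moment map for $G_k$ acting on $\WPP$ at the embedding $\iota_k : X \hookrightarrow \WPP$ determined by $\omega$ then produces a trace-free Hermitian endomorphism of $\bigoplus H^0(L^{k+i})$ whose $L^2$ operator norm is of strictly smaller order in $k$ than the leading order of the embedding itself. The fact that the Fubini--Study form on $\WPP$ differs from the curvature of the hyperplane bundle contributes only a lower-order correction already quantified in the earlier section.

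Given a test configuration, the flat family $(\X,\LL)$ sits inside $\WPP \times \C$ and the generating $\C^*$-action is a one-parameter subgroup $\lambda(t) \subset G_k$. Its Mumford weight on the Chow/Hilbert point of $\iota_k(X)$ is the quantity whose non-negativity characterises semistability by Hilbert--Mumford. A standard convexity argument bounds this weight from below by minus the $L^2$-distance of the moment map image from zero, times the operator norm of the infinitesimal generator of $\lambda$. Combined with the almost-balanced estimate of stage one, this forces the Mumford weight to exceed $-\epsilon_k$ with $\epsilon_k$ of lower order in $k$ than the Futaki invariant's leading term. Extracting the Riemann--Roch (polynomial) coefficient in $k$ after normalising the periodic terms to have vanishing average, as prescribed in the introduction, yields the Donaldson--Futaki invariant of $(\X,\LL)$ and so completes the semistability statement.

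The hardest step will be the pairing in stage two: one must check that when the moment map (which is built from an \emph{averaged} combination of $L^{k+i}$) is contracted against the single one-parameter subgroup coming from a test configuration, the periodic contributions from the orbifold locus really cancel and only the polynomial Futaki term survives. Closely related is the non-reductive subtlety: the full automorphism group of $\WPP$ is larger than $G_k$, so one has to verify, using the moduli-space comparison sketched in the introduction, that each test configuration is realisable by a one-parameter subgroup inside the reductive part. Once these two points are settled, the remaining analysis mirrors the manifold case with the canonical divisor replaced throughout by the orbifold canonical divisor.
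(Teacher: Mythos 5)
Your proposal follows essentially the same route as the paper: an almost-balanced estimate for the embeddings $X\hookrightarrow\PP(\oplus_i H^0(L^{k+i})^*)$ obtained from the weighted Bergman kernel expansion (the bound on $\lVert M_k(X)\rVert$ in terms of $\lVert\Scal(\omega)-\,\overline{\!S}\rVert_{L^2}$), followed by the moment-map/Kempf--Ness convexity argument along the $\C^*$-orbit pairing $M_k$ against the generator $A$ of the test configuration, with the periodic orbifold contributions killed by the choice of weights $c_i$ (the paper's Lemma \ref{lem:ignoringperiodic}) and the realisation of abstract test configurations inside the reductive group handled exactly as you flag. The two points you single out as the hardest steps are indeed the ones the paper addresses explicitly, so the plan is sound and matches the published argument.
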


Our approach follows the proof given for manifolds by Donaldson in \cite{donaldson:05:lower_bound_calab_funct}. An improvement by Stoppa \cite{stoppa:09:k_stabil_const_scalar_curvat_kahler_manif} says that, as long as one assumes a discrete automorphism group, the existence of a cscK metric actually implies K-stability -- it is natural to ask if this too can be extended to orbifolds. \medskip

Finally we give an orbifold version of the slope semistability of
\cite{ross_thomas:06:obstr_to_exist_const_scalar,ross_thomas:07:study_hilber_mumfor_criter_for},
which we show is implied by orbifold K-semistability. Together with Theorem
\ref{thm:cscKimpliesstability} it gives an obstruction to the existence of orbifold cscK
metrics. We use this to interpret some of the known obstructions in terms of stability,
for instance the work of Troyanov on orbifold Riemann surfaces, Ghigi-Koll\'ar on orbifold projective spaces, and Rollin-Singer on projectivisations of parabolic bundles.  A particularly important class for this theory is Fano orbifolds, where cscK metrics are K\"ahler-Einstein
and equivalent to certain quasi-regular Sasaki-Einstein metrics on odd dimensional manifolds.
In this vein we interpret the Lichnerowicz obstruction of Gauntlett-Martelli-Sparks-Yau in terms of stability.

\section{Extensions}

\noindent\textbf{Non-cyclic orbifolds.}
We have restricted our attention purely to orbifold with cyclic quotient singularities.
It should extend easily to orbifolds whose stabilisers are products of cyclic groups by using several ample (in the sense of Section \ref{oample}) line bundles to embed in a product of weighted projective spaces. To encompass also non-abelian orbifolds one should replace the line bundle with a bundle of higher rank so that the local stabiliser groups can act effectively on the fibre over a fixed point, to give a definition of local
ampleness mirroring \ref{defn:locallyample} in the cyclic case. Then one would hope to embed into weighted Grassmannians. We thank Dror Varolin for this suggestion. \smallskip

\noindent\textbf{More general cone angles and ramifolds.}
It would be nice to extend our results from orbifold K\"ahler
metrics -- which have cone angles of the form $2\pi/p,\ p\in\mathbb N$, along divisors
$D$ -- to metrics with cone angles which are any positive rational multiple of $2\pi$.
It should be possible to study these within the framework of algebro-geometric stability
as well.

The one dimensional local model transverse to $D$ is as follows. In this paper to get
cone angle $2\pi/m$ along $x=0$ we introduce extra local functions $x^{\frac km}$ (by
passing the local $m$-fold cover and working with orbifolds). Therefore to produce cone
angles $2\pi p$ it makes sense to discard the local functions $x,x^2,\ldots x^{p-1}$
and use only $1,x^p,x^{p+1},\ldots$\ . (We could use $1,x^p,x^{2p},\ldots$, i.e. pass
to a $p$-fold quotient instead of an $m$-fold cover, but this would
be less general, producing metrics invariant under $\ZZ/p\ZZ$ rather
than those with this invariance on only the tangent space at $x=0$.)

The map $(x^p,x^{p+1})$ from $\C$ to $\C^2$ is a set-theoretic injection with
image $\{v^p=u^{p+1}\}\subset\C^2$. For very small $x$ (so that $x^{p+1}$ is negligible
compared to $x^p$) it is very close to the $p$-fold
cover $x\mapsto x^p$. More precisely, $\{v^p=u^{p+1}\}$ has
$p$ local branches (interchanged by monodromy) all tangent to the $u$-axis. Going once
round $x=0$ through angle $2\pi$ we go $p$ times round $u=0$ through angle $2\pi p$.
Therefore if we restrict a K\"ahler
metric from $\C^2$ to $\{v^p=u^{p+1}\}$ and pullback to $\C$ we get a smooth K\"ahler
metric away from $x=0$ which has cone angle $2\pi p$ at the origin. Similarly the map
$(x^p,x^{p+1},\ldots,x^{p+k})$ to $\C^{k+1}$ has the same property.

To work globally one has to pick a splitting $H^0(X,L^k)\cong H^0(D,L^k)\oplus
H^0(X,L^k(-D))$ and discard those functions in the second summand which do not vanish
to at least order $p$ along $D$. That is, we take
the obvious map
\begin{equation} \label{ramifold}
X\to\PP\big(H^0(D,L^k)^*\oplus H^0(L^k(-pD))^*\big).
\end{equation}
So instead of Kodaira
embedding, we take an injection which fails to be an embedding in the normal directions
to $D$ just as in the local model above.
(More generally, to get cone angles $2\pi p/q$ one should apply the above description
to an orbifold with $\ZZ/q\ZZ$ stabilisers along $D$ and injections instead into weighted
projective spaces.) One might hope for a relation between balanced injections of $X$
\eqref{ramifold} and cscK metrics with prescribed cone
angles along $D$.  We thank Dmitri Panov for discussions about these ``ramifolds". He
has also pointed out that it is too ambitious to expect the full
theory for manifolds and orbifolds to carry over verbatim to this setting since cscK
metrics with cone angles greater than
$2\pi$ can be non-unique. We hope to return to this in future work.
\smallskip

\noindent\textbf{Zero cone angles, cuspidal metrics and stability of pairs.}
It would be fruitful to consider the limit of large orbifold order.  By this we mean
fixing
the underlying space $X$ and a divisor $D$, then putting $\ZZ/m\ZZ$-stabilisers along
$D$ (as in Section \ref{codim1}) and considering $m\gg 0$.  Then, formally at least, stability in the limit $m\to\infty$ is the same as stability of the underlying space where the numerical invariants are calculated with $K_X$ replaced with $K_X+D$.  This has been studied by Sz\'ekelyhidi \cite{szekelyhidi:07:extrem_metric_k_stabil} under the name of ``relative stability''   of the pair $(X,D)$, which he conjectures to be linked via a Yau-Tian-Donaldson conjecture to the existence of complete ``cuspidal" cscK metrics on $X\take D$. And indeed one can think of orbifold metrics with cone angle $2\pi/m$
along a divisor $D$ as tending (as $m\to\infty$) to a complete metric on $X\take D$ (thanks
to Simon Donaldson and Dmitri Panov for explaining this to us). \smallskip

\noindent\textbf{Pairs.}
In principle this paper gives many other ways of forming moduli spaces of pairs $(X,D)$.
Initially one should take $X$ smooth projective and $D$ a simple normal crossings divisor which is a union
of smooth divisors $D_i$. Labelling the $D_i$ by integers $m_i>0$ satisfying the conditions
of Section \ref{codim1}
we get a natural orbifold structure on $X$ from which we recover $D$ as the locus with
nontrivial stabiliser group. Taking (for instance) the orbifold line bundle produced
by tensoring a polarisation on $X$ by $\O(\sum_iD_i/m_i)$ gives an orbifold line bundle
which is ample in the sense of Section \ref{oample}. Embedding in weighted projective
space as in Section \ref{okod} and dividing the resulting Hilbert scheme by the reductive
group described in Section \ref{chap:moduli} gives a natural GIT problem and notion of
stability.

One should then analyse
which orbischemes appear in the compactification that this produces (in this paper we
mainly study only smooth orbifolds and their cscK metrics). It is quite possible
that the resulting stable pairs will form a new interesting class.
Studying moduli and stability of varieties using GIT fell out of favour, not least because the singularities it allows are not those that arise naturally in birational geometry,
but interesting recent work of Odaka suggests a relationship between the newer notion
of K-stability (rather than Chow stability) and semi-log-canonical singularities. It
is therefore natural to wonder if orbifold K-stability of $(X,D)$ is related to some special types of singularity of pairs (perhaps this is most likely in the $m\to\infty$
limit of the last section). In fact the recent work of Abramovich-Hassett
\cite{Abramovich-Hassett} precisely studies moduli of varieties and pairs using orbischemes,
birational geometry and the  minimal model programme (but not GIT).

An obvious special case is curves with weighted marked points, as studied by Hassett \cite{hassett:03:modul_spaces_weigh_point_stabl_curves} and constructed using GIT by Swinarski \cite{swinarski:git_stabil_weigh_point_curves}.  It is possible that Swinarski's
construction can be simplified by using embeddings in weighted projective space instead
of projective space, and even that his (difficult) stability argument might follow from
the existence of an orbifold cscK metric.

\subsubsection*{Acknowledgements}
We thank Dan Abramovich, Simon Donaldson, Alessandro Ghigi, Hiroshi Iritani, Johan de Jong, Dmitri Panov, Miles Reid, Yann Rollin,  James Sparks, Bal\'azs Szendr\H oi and Dror Varolin for useful conversations. Abramovich and Brendan Hassett have also recently studied moduli of orbifolds and weighted projective embeddings \cite{Abramovich-Hassett}, though from a very different and much more professional point of view. In particular they do not use GIT and are mainly interested in the singularities that occur in the compactification; here we are only concerned with smooth orbifolds for the link to differential geometry.  JR received support from NSF Grant DMS-0700419 and Marie Curie Grant PIRG-GA-2008-230920 and RT held a Royal Society University Research Fellowship while this work was carried out.

\chapter{Orbifold embeddings in weighted projective space}\label{chap:embed}

The proper way to write this paper would be using Deligne-Mumford stacks, but this would
alienate much of its potential readership (as well as the two authors). Most of our
DM stacks are smooth, so there is an elementary description in terms of orbifolds, and
it therefore makes sense to use it. However at points (such as when we consider the central
fibre
of a degeneration of orbifolds) DM stacks, or orbischemes, are unavoidable. At this point
most of the results we need (such as the appropriate version of Riemann-Roch) are only
available in the DM stacks literature. So we adopt the following policy. Where possible
we phrase things in elementary terms using only orbifolds. We state the results we need
in this language, even when the only proofs available are in the DM stacks literature.
Where we do something genuinely new we give proofs using the orbifold language,
even though they of course apply more generally to orbischemes or DM stacks.

\section{Orbibasics}\label{sec:orbibasics}

We sketch some of the basics of the theory of orbifolds and refer the reader to \cite{boyer_galicki:08:sasak_geomet,ghigi_kollar:kaehl_einst_metric_orbif_einst_metric_spher}
for more details.  An orbifold consists of a variety $X$ (either an algebraic variety
or, for us, an analytic space) with only finite quotient singularities that is covered
by orbifold charts of the form $U\to U/G\cong V\subset X$, where $V$ is an open set in
$X$, $U$ is an open set in $\mathbb C^n$ and $G$ is a finite group acting effectively
on $U$. We also insist on a minimality
condition, that the subgroups of $G$ given by the stabilisers of points of $U$ generate
$G$ (otherwise one should make both $U$ and $G$ smaller -- it is important that we are
using the analytic topology here).

The gluing condition on charts is the following. If $V'\subset V$ are open sets
in $X$ with charts $U'/G'\cong V'$ and $U/G\cong V$ then there should exist a monomorphism $G'\into G$ and an injection $U'\into U$ commuting with the given $G'$-action on $U'$ and its action through $G'\into G$ on $U$.

Notice that these injections are \emph{not} in general unique, so the charts do not have
to satisfy a cocycle condition upstairs, though of course they do downstairs where the
open sets $V$ glue to give the variety $X$. That is, the orbifold charts need not glue since an orbifold need not be
a global quotient by a finite group, though we will see in Remark \ref{globCstar} that
they \emph{are} global $\C^*$-quotients under a mild condition.

It follows from the gluing condition that the \emph{order} of a point $x\in X$ -- the size of the stabiliser of any lift of $x$ is any orbifold chart -- is well defined.  The \emph{order of $X$} is defined to be the least common multiple of the order of its points (which is finite if $X$ is compact). The \emph{orbifold locus} is the set of points with nontrivial stabiliser group.

In this paper we will mostly
consider only compact orbifolds with cyclic stabiliser groups, so that each $G$ is always
cyclic.

By an \emph{embedding} $f\colon X\to Y$ of orbifolds we shall mean an embedding
of the underlying spaces of $X$ and $Y$ such that for every $x\in X$ there exist orbifold
charts $U'\to
U'/G\ni x$ and $U\to U/G\ni f(x)$ such that $f$ lifts
to an equivariant embedding $U'\into U$. We say that the orbifold structure on $X$ is
pulled back from that on $Y$. Similarly we get a notion of isomorphism of orbifolds.
\medskip

Given a point in the orbifold locus with stabiliser group $\ZZ/m\ZZ$,
call its preimage in a chart $p$, with maximal ideal $\m_p$. Split its
cotangent space $\m_p/\m_p^2$ into weight spaces under the group action (and use the
fact that the ring of formal power series about that point is $\oplus_iS^i(\m_p/\m_p^2)$)
to see that locally analytically there is a chart $U\to U/(\ZZ/m\ZZ)$ of the form
\begin{equation} \label{localmodel}
(z_1,z_2,\ldots,z_n) \mapsto (z_1^{a_1},z_2^{a_2},\cdots z_k^{a_k},z_{k+1},\ldots,z_n),
\end{equation}
for some integers $a_i$ which divide $m$. We call this an \emph{orbifold point} of type $\frac{1}{m}(\lambda_1,\ldots,\lambda_k)$ if $\zeta\in \ZZ/m\ZZ$ acts\footnote{Here $\lambda_i$
is a multiple of $m/a_i$, of course. We are disobeying Miles Reid and picking the usual identification of $\ZZ/m\ZZ$ with the $m$th roots of unity.} as
$$
\zeta\cdot(z_1,\ldots,z_k) = (\zeta^{\lambda_1}z_1,\ldots,\zeta^{\lambda_k}z_k).
$$

The general principle is that any local object (e.g. a tensor) on an orbifold is defined
to be an invariant object on a local chart (rather than an object downstairs on the underlying
space). So an \emph{orbifold K\"ahler metric} is an invariant K\"ahler metric on $U$
for each orbifold chart $U\to U/G$ which glues: its pullback under an injection $U'\into
U$ of charts above is the corresponding metric on $U'$. Such a metric descends to give
a K\"ahler metric on the underlying space $X$, but with possible singularities along
the orbifold locus.

For instance the standard orbifold K\"ahler metric on $\C/(\ZZ/m\ZZ)$ is given
by $\frac i2 dz\,d\bar z$, where $z$ is the coordinate on $\C$ upstairs and $x=z^m$ is
the coordinate on the scheme theoretic quotient $\C$. Downstairs this takes the
form $\frac i2m^{-2}|x|^{\frac2m-2}dx\,d\bar x$, which is a singular K\"ahler metric
on $\C$. The circumference of the circle of radius $r$ about the origin is easily calculated
to be $2\pi r/m$, so the metric has cone angle $2\pi/m$ at the origin, whereas usual K\"ahler metrics have
cone angle $2\pi$. More generally for any divisor $D$ in the orbifold locus with
stabiliser group $\ZZ/m\ZZ$, orbifold K\"ahler metrics on $X$ have cone angle $2\pi/m$
along $D$. So it is important for us to think of $\C/(\ZZ/m\ZZ)$ as an orbifold,
and not as its scheme theoretic quotient $\C$. 

Even when the stabilisers have codimension
two (so that the orbifold is determined by the underlying variety with quotient singularities,
and one ``can forget" the orbifold structure if only interested in the algebraic or analytic
structure) an orbifold metric is very different from the usual notion of a K\"ahler metric
over the singularities (i.e. one which is locally the restriction of a K\"ahler metric
from an embedding in a smooth ambient space).

\section{Codimension one stabilisers} \label{codim1}
The cyclic orbifolds which will most interest us will be those for which the orbifold
locus has codimension one. These are the orbifolds whose local model \eqref{localmodel}
has coprime weights $a_i$.

Therefore globally the orbifold is described by the pair $(X,\Delta)$, where
\begin{itemize}
\item $X$ is a \emph{smooth} variety,
\item $\Delta$ is a $\mathbb Q$-divisor of the form
$\Delta = \sum_i \left(1-\frac{1}{m_i}\right)D_i$,
\item the $D_i$ are distinct smooth irreducible effective divisors,
\item $D=\sum D_i$ has normal crossings, and
\item the $m_i$ are positive integers such that $m_i$ and $m_j$ are coprime if $D_i$
and $D_j$ intersect.
\end{itemize}
Then the stabiliser group of points in the intersection of several components $D_i$ will
be the product of groups $\mathbb Z/m_i\mathbb Z$, and this is cyclic by the coprimality
assumption.

Here $\Delta$ is the ramification divisor of the orbifold charts; see Example \ref{Kample}
for the expression of this in terms of the orbifold canonical bundle.

Notice that above we are also claiming the converse: that given such a pair $(X,\Delta)$
it is an easy exercise to construct an orbifold with stabiliser groups $\ZZ/m_i\ZZ$ along
the $D_i$, and this is unique. This can be generalised to Deligne-Mumford stacks \cite{Cadman};
we give a global construction in \eqref{Cadman}.\medskip

Orbifolds with codimension one stabilisers were called ``not well formed" in the days when ``we were doing the wrong thing"
(Miles Reid, Alghero 2006). Then orbifolds
were studied as a means to produce schemes, so only the quotient was relevant.
The orbifold locus could be removed, since the quotient is smooth. Hence in much of the
literature (e.g.\ \cite{dolgachev:82:weigh_projec_variet}) the not well formed
case is unfortunately ignored.
\medskip

More generally, \emph{any} orbifold can be dealt with in much the same way: it can be
described by a pair $(X,\Delta)$ just as above, but where $X$ has at worst finite cyclic
quotient singularities. This is the point of view taken by \cite{ghigi_kollar:kaehl_einst_metric_orbif_einst_metric_spher}.

\section{Weighted projective spaces}
The standard source of examples of orbifolds is weighted projective spaces.  A graded
vector space $V=\oplus_iV^i$ is equivalent to a vector space $V$
with a $\C^*$-action, acting on $V^i$ with weight $i$. Throughout this paper $V$ will
always be finite dimensional, with all weights strictly positive. We can
therefore form the associated weighted projective space
$\PP(V):=(V\take\{0\})/\C^*$. This is sometimes denoted
$\PP(\lambda_1,\ldots\lambda_n)$, where $n=\dim V$ and the $\lambda_j$ are
the weights (so the number of $\lambda_j$ that equal $i$ is $\dim V^i$).

Let $x_j,\ j=1,\ldots,n$, be coordinates on $V$ such that $x_j$ has weight
$-\lambda_j$. Then $\PP(V)$ is covered by the orbifold charts
\begin{eqnarray} \label{chart}
\{x_j=1\} &\!\! \cong\C^{n-1} \\
\downarrow\ \ \ \,\nonumber \\ \PP(V).\ \nonumber 
\end{eqnarray}
The $\lambda_j$th roots of unity $\ZZ/\lambda_j\ZZ\subset\C^*$ act trivially
on the $x_j$ coordinate, preserving the above $\C^{n-1}$ slice. The vertical
arrow is the quotient by this $\ZZ/\lambda_j\ZZ$; the generator
$\exp(2\pi i/\lambda_j)\in\C^*$ acting by
\begin{equation} \label{gaction}
(x_i)\mapsto(\exp(2\pi i\lambda_i/\lambda_j)x_i).
\end{equation}
 The order of $\PP(V)$ is the least common multiple of the weights
$\lambda_j$.
If the $\lambda_j$ have highest common factor $\lambda>1$ then $\PP(V)$ has
generic stabilisers:\nocite{dolgachev:82:weigh_projec_variet}
every point is stabilised by the $\lambda$th roots of unity, and we will usually assume that this is not the case, so $\PP(V)$ inherits the structure of an orbifold with cyclic stabiliser groups.  

The orbifold points of $\PP(V)$ are as follows. Each vertex $P_i:=[0,\dots,1,\dots,0]$ is of type $\frac{1}{\lambda_i}(\lambda_1,\ldots,\widehat{\lambda}_i,\ldots,\lambda_N)$.
The general points along the line $P_iP_j$ are orbifold points of type
$\frac1{\hcf(\lambda_i,\lambda_j)}
(\lambda_1,\ldots,\widehat{\lambda}_i,\ldots,\widehat{\lambda}_{j},\ldots,\lambda_N)$,
with similar orbifold types along higher dimensional strata.

Thus if for some $j$ the $\lambda_i,\,i\ne j$, have highest common factor
$\lambda>1$ then $\PP(V)$ is not well formed: it has a divisor of 
orbifold points with stabiliser group containing $\ZZ/\lambda\ZZ$ along $x_j=0$.
Replacing the $\lambda_i,\,i\ne j$, by $\lambda_i/\lambda$ gives a well formed weighted
projective space \cite{dolgachev:82:weigh_projec_variet,fletcher:00:workin_with_weigh_compl_inter}
which is just the underlying variety without the divisor of orbifold points. As discussed
in the last section, it is important for us \emph{not} to mess with the orbifold structure
in this way.

Similarly the map $\PP^{n-1}\to\PP(\lambda_1,\ldots,\lambda_n),\ [x_1,\ldots,x_n]\mapsto
[x_1^{\lambda_1},\ldots,x_n^{\lambda_n}]$ exhibits the underlying variety of weighted
projective space as a global finite quotient of ordinary projective space. Again this
does \emph{not} give the right orbifold structure of \eqref{chart}, so we do not use
it.

\section{Orbifold line bundles and $\Q$-divisors} \label{Qdiv}
Locally an orbifold line bundle is simply an equivariant line bundle on an orbifold chart.
This differs from an ordinary line bundle pulled back from downstairs which satisfies
the property that the $G$-action on the line over any fixed point is trivial. In other
words (the pull back to an orbifold chart of) an ordinary line bundle has a local invariant
trivialisation, which an orbifold line bundle may not. So in general orbifold line bundles
are \emph{not} locally trivial.

To define them globally we need some notation. Suppose that $V_i,V_j,V_k$
are open sets in $X$ with charts $U_i/G_i\cong V_i$, etc. Then by the definition of an
orbifold the overlaps $V_{ij}:=V_i\cap V_j$, etc, also have charts $U_{ij}/G_{ij}\cong
V_{ij}$ and inclusions $U_{ij}\into U_i,\ G_{ij}\into G_i$, etc.

Given local equivariant line bundles $L_i$
over each $U_i$, the gluing (or cocycle) condition to define a global orbifold line bundle
is the following. Pulling back $L_j$ and $L_i$ to $U_{ij}$ (via its inclusions in $U_j,\,U_i$
respectively) there should be isomorphisms $\phi_{ij}$ from the former to the latter,
intertwining the actions of $G_{ij}$. Pulling back
further to $U_{ijk}$ we call this isomorphism $\phi_{ij}\in L_i\otimes L_j^*$ (suppressing the pullback
maps for clarity). The cocycle condition is that over $U_{ijk}$,
$$
\phi_{ij}\phi_{jk}\phi_{ki}\ \in\ L_i\otimes L_j^*\otimes L_j\otimes L_k^*\otimes L_k
\otimes L_i^*
$$
should be precisely the identity element $1$.

The standard example is the orbifold canonical bundle $K_{\orb}$, which is defined to
be $K_U$ on the chart $U$ (with the obvious $G$-action induced from that on $U$) and
which glues automatically. 
\medskip

\begin{example}
Take $X$ a smooth space with a smooth divisor $D$ along which we put $\ZZ/(m\ZZ)$ stabiliser
group to form the orbifold $(X,(1-1/m)D)$. Then the orbifold line bundle $\O\big(\!-\!
\frac1mD\big)$
is easily defined as the ideal sheaf of the reduced pullback of $D$ to any chart. In
this way it glues automatically.

Locally it has generator $z$, a local coordinate upstairs
cutting out the reduced pullback of $D$. But this has weight one under the $\ZZ/m\ZZ$-action;
it is not an invariant section, so does \emph{not} define a section of the orbifold line
bundle downstairs ($z^{km-1}$ times this generator does, for all $k\ge0$). Therefore this orbifold bundle is \emph{not} locally trivial: it is locally the trivial line bundle
with the weight one \emph{nontrivial} $\ZZ/m\ZZ$-action.

Away from $D$, the section which is $z^{-1}$ times by this weight one generator is both
regular and invariant, so can be glued to the trivial line bundle.
In this way one can give an equivalent definition of $\O\big(\!-\!\frac1mD\big)$ via
transition functions, much as in the manifold case.

Taking tensor powers we can form $\O\big(\frac nmD\big)$
for any integer $n$. This is an ordinary line bundle only for $n/m$ an integer. The inclusion
$\O(-\frac1mD)\into\O_X$ defines a canonical section $s_{D/m}$ of $\O(\frac1mD)$ which
in the orbifold chart above looks like $z$ vanishing on $D$.

The pushdown to the underlying manifold $X$ of $\O\big(\frac nmD\big)$ is the ordinary
line bundle given by the round down
\begin{equation} \label{rounddown}
\O\left(\Lfloor\frac nm\Rfloor D\right).
\end{equation}
That is to say that the (invariant) sections of $\O\big(\frac nmD\big)$ are of the form
$s_{D/m}^{\frac nm-\Lfloor\!\frac nm\!\Rfloor}t$, where $t$ is any section of the ordinary
line bundle $\O\big(\!\Lfloor\frac nm\Rfloor\!\big)$ on $X$.
%

Since tensor product does not commute with round down, we lose information by
pushing down to $X$: the natural consequence of orbifold line bundles not being
locally trivial.
\end{example}

More generally on any orbifold given by a pair $(X,\Delta)$ as in Section \ref{codim1},
orbifold line bundles and their sections correspond to $\Q$-divisors such that the denominator
of the coefficient of $D_i$ must divide $m_i$, and any irreducible divisor $D$ not in
the list of $D_i$ must have integral coefficients. The space of global sections of the
orbifold line bundle is the space of sections of the round down. Care must be taken however;
for instance if $D_1$ and $D_2$ have $\ZZ/m\ZZ$-stabilisers along them and $\O(D_1)\cong\O(D_2)$
this certainly does \emph{not} imply that $\O(D_1/m)\cong\O(D_2/m)$. \medskip

The \emph{tautological line bundle} $\O_{\PP(V)}(-1)$ over the weighted projective space
$\PP(V)$ is the orbi-line bundle
over $\PP(V)$ with fibre over $[v]$ the union of the orbit $\C^*.v\subset
V$ and $0\in V$. (Any two elements in a fibre can be written $w_i=t_i.v$
for $t_i\in\C,\
i=1,2$, so we can define the linear structure by $aw_1+bw_2:=(at_1+bt_2).v$.
Ordinarily this is not the linear structure on $V$ and the fibre $\O_{[v]}(-1)\subset
V$ is not a linear subspace.) Over the orbi-chart \eqref{chart} this is the
trivial line
bundle $\C^{n-1}\times\C$ with the weight one $\ZZ/\lambda_j\ZZ$-action
on the line $\C$ times by its action (\ref{gaction}) on $\C^{n-1}$. In other
words the map
\begin{eqnarray} \label{trivi}
\C^{n-1}\times\C &\to& \C^n \\ \nonumber
(x_1,\ldots,\widehat x_j,\ldots,x_n,t) &\mapsto& (t^{\lambda_1}x_1,\ldots,
t^{\lambda_j},\ldots,t^{\lambda_n}x_n)
\end{eqnarray}
becomes $(\ZZ/\lambda_j\ZZ)$-equivariant when we use the action (\ref{gaction}) on $\C^{n-1}$,
the standard weight-one action on $\C$, and the original weighted
$\C^*$-action on $\C^n$. The map (\ref{trivi}) is defined in order
to take the trivialisation
$1$ of $\C$ to the tautological trivialisation of the pullback of the orbit
to the chart (\ref{chart}) (a point of the chart (\ref{chart}) is a point
of its own orbit and so trivialises it).  

Note that Dolgachev \cite{dolgachev:82:weigh_projec_variet} uses the same notation
$\O_{\PP(V)}(-1)$ to denote the push forward of our $\O_{\PP(V)}(-1)$ to the underlying
space, thus rounding down fractional divisors. Therefore $\O_{\PP(V)}(a+b) = \O_{\PP(V)}(a) \otimes \O_{\PP(V)}(b)$ does not hold for his sheaves, but is true almost
by definition for our orbifold line bundles.
\medskip

As a trivial example, consider $\O(k)$ over the weighted projective line $\PP(1,m)$.
The first coordinate $x$ on $\C^2$ has weight one, so restricts to a linear functional
on orbits (the fibres of $\O(-1)$). It therefore defines a section of $\O(1)$ which
vanishes at the orbifold point $x=0$. Since $x$ is the coordinate upstairs in the chart
\eqref{chart} and $x^m$ the coordinate downstairs, this is $\frac1m$ times by a real
manifold point. The coordinate $y$ has weight $m$ on the fibres of $\O(-1)$ so defines
a section of $\O(m)$ which vanishes
at the manifold point $y=0$.

The underlying variety is the projective space on the degree $m$ variables $x^m,y$, i.e.
it is $\PP^1$ with reduced points $0$ and $\infty$ where these two variables vanish.
Thus
$$
\O_{\PP(1,m)}(k)=\O\left(\frac km(0)\right)=\O\left(\Lfloor\frac km\Rfloor(\infty)+
\left(\frac km-\Lfloor\frac km\Rfloor\right)(0)\right).
$$
Similarly on $\PP(a,b)$ with $pa+qb=1$, the underlying variety is the usual Proj of the
graded ring on the degree $ab$ generators $x^b$ and $y^a$. Denote by $0$ and $\infty$
the zeros of $x^b$ and $y^a$ respectively. Then it is a nice exercise to check that the
orbifold line bundle $\O_{\PP(a,b)}(1)$ is isomorphic to
$$
\O\left(\frac pb(0)+\frac qa(\infty)\right),
$$
of degree $\frac1{ab}$.

\section{Orbifold polarisations} \label{oample}
To define orbifold polarisations we need the right notion of ampleness
or positivity. For manifolds (or schemes) this is engineered to ensure that
the global sections of $L$ generate the local ring of functions at each point.  For orbifolds, this requires also a \emph{local} condition on an orbifold
line bundle $L$, as we explain using the simplest example. Consider the orbifold
$\C/(\ZZ/2\ZZ)$ with local coordinate $z$
on $\C$ acted on by $\ZZ/2\ZZ$ via $z\mapsto-z$. Then $x=z^2$ is a local
coordinate
on the quotient thought of as a manifold. Any line bundle pulled back from
the quotient (i.e. which
has trivial $\ZZ/2\ZZ$-action upstairs when considered as a trivial line
bundle there) has invariant sections $\C[x]=\C[z^2]$.
Therefore it sees the quotient only as a
manifold, missing the extra functions of $\sqrt x=z$ that the orbifold
sees. So we do not think of it as locally ample: if we tried to embed using its sections we would ``contract" the stabilisers, leaving us with the underlying manifold.

Conversely the trivial line bundle upstairs with nontrivial $\ZZ/2\ZZ$-action
(acting as $-1$ on the trivialisation) has invariant sections $\sqrt x\,\C[x]=z\C[z^2]$.
Its square has trivial $\ZZ/2\ZZ$-action and has sections $\C[x]=\C[z^2]$
as above. Therefore its sections and those of its powers generate the entire ring of functions
$\C[\sqrt x]=\C[z]$ upstairs, and see the full orbifold structure.

\begin{defn}\label{defn:locallyample}
An orbifold line bundle $L$ over a cyclic orbifold $X$ is \emph{locally ample}
if in an orbifold chart around $x\in X$, the stabiliser group acts
faithfully on the line $L_x$.  We say $L$ is \emph{orbi-ample} if it is both locally ample and globally positive.  (By globally positive here we mean $L^{\ord(X)}$ is ample in the usual sense when thought of as a line bundle on the underlying space of $X$;  from the Kodaira-Baily embedding theorem \cite{baily:57:imbed_v_manif_in_projec_space} one can equivalently ask that $L$ admits a hermitian metric with positive curvature.)

By a \emph{polarised} orbifold we mean a pair $(X,L)$ where $L$ is an orbi-ample line bundle on $X$.
\end{defn}

Note that ordinary line bundles on the underlying space are never ample on genuine orbifolds.   Some care needs to be taken when applying the usual theory to orbi-ample
line bundles.  For instance it is not necessarily the case that the tensor product of locally ample line bundles remain locally ample, but if $L$ is locally ample then so
is $L^{-1}$. One can easily check that $L$ is orbi-ample if and only if $L^k$ is ample for one (or all) $k>0$ coprime to $\ord(X)$.

\begin{example} \label{Kample}
The orbifold canonical bundle $K_{\orb}$ is locally ample along divisors of orbifold
points, but not necessarily at codimension two orbifold points. For instance the quotient
of $\C^2$ by the scalar action of $\pm1$ has trivial canonical bundle, so local ampleness
is not determined in codimension one.

Suppose that $X$ is smooth but with a divisor $D$ with stabiliser group $\ZZ/m\ZZ$.
Locally write $D$ as $x=0$ and pick a chart with coordinate $z$ such that $z^m=x$. Then
the identity $dx=mz^{m-1}dz=mx^{1-\frac1m}dz$ shows that $X$ has
orbifold canonical bundle
$$
K_{\orb}\ =\ K_X+\left(1-\frac1m\right)D\ =\ K_X+\Delta,
$$
where $K_X$ is the canonical divisor of the variety underlying $X$. More generally if
the orbifold locus is a union of divisors $D_i$ with stabiliser groups $\ZZ/m_i\ZZ$ then
$K_{\orb}=K_X+\Delta$, where $\Delta=\sum_i\big(1-\frac1{m_i}\big)D_i$ as in Section
\ref{codim1}.
\end{example}

\begin{example}
The hyperplane bundle $\O_{\PP(V)}(1)$ on any weighted projective space $\PP(V)$ is locally ample, and it is actually orbi-ample since some power is ample \cite[Proposition 1.3.3]{dolgachev:82:weigh_projec_variet} (we shall also show below that it admits a hermitian metric with positive curvature).  The pullback of an orbi-ample bundle along an orbifold embedding is also orbi-ample, and thus any orbifold embedded in weighted projective space admits an orbi-ample line bundle.  If $(X,\Delta)$ is an orbifold, $X$ is smooth and $H$ is an ample divisor on $X$ then the orbifold bundle $H+\Delta$ of Section \ref{codim1} is orbi-ample if and only if $H+\Delta$ is an ample $\mathbb Q$-divisor on $X$.
  \end{example}

\section{Orbifold Kodaira embedding} \label{okod}
Now fix a polarised orbifold $(X,L)$ and $k\gg0$.
Let $i$ run throughout a fixed indexing set ${0,1,\ldots,M}$, where $M\ge\ord(X)$,
and let $V$ be the graded vector space
$$
V=\bigoplus_iV^{k+i}:=\bigoplus_iH^0(L^{k+i})^*.
$$
We give the $i$th summand weight $k+i$. Map $X$ to the weighted projective
space $\PP(V)$ by
\begin{equation} \label{WKE}
\phi_k(x):=\big[\oplus_i\ev^{k+i}_x\big].
\end{equation}
Here we fix a trivialisation of $L_x$ on an orbifold chart, inducing trivialisations
of all powers
$L_x^{k+i}$, and then $\ev^{k+i}_x$ is the element of $H^0(L^{k+i})^*$ which
takes a section $s\in H^0(L^{k+i})$ to $s(x)\in L^{k+i}_x\cong\C$.
The weights are chosen so that a change in trivialisation induces a change
in $\oplus_i\ev^{k+i}_x$ that differs only by the action of $\C^*$ on $V$.

Picking a basis $s_j^{k+i}$ for $H^0(L^{k+i})$, then, the map can be described
by
$$
\phi_k(x)=\big[(s_j^{k+i}(x))_{i,j}\big].
$$
This map is well defined at all points $x$ for which there exists a
global section of some $L^{k+i}$ not vanishing at $x$.

\begin{prop} \label{Kod}
If $(X,L)$ is a polarised orbifold then for $k\gg0$ the map \eqref{WKE}
is an embedding of orbifolds (i.e.\ the orbifold structure on $X$ is pulled back
from that on the weighted projective space $\PP(V)$) and 
\[\phi_k^*\O_{\PP(V)}(1)\cong L.\]
\end{prop}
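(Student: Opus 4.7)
The plan is to adapt the classical Kodaira embedding argument to the orbifold setting, where the essential new ingredient is to exploit the $M+1\ge\ord(X)+1$ consecutive powers of $L$ so that every character of every cyclic stabiliser group appears among the graded pieces $H^0(L^{k+i})^*$ of $V$. This is what lets the map $\phi_k$ distinguish the orbifold structure at codimension-one points where an ordinary projective embedding would only see the underlying variety.

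First I would dispense with the routine parts. The relation $\phi_k^*\O_{\PP(V)}(1)\cong L$ is essentially tautological: the map $l\mapsto\bigoplus_i\ev^{k+i}_{x,l}$ from $L_x^{-1}\setminus\{0\}$ into $V$ defines a $\C^*$-equivariant identification of $L_x^{-1}$ with the fibre of $\O_{\PP(V)}(-1)$ over $\phi_k(x)$, precisely because the $i$th summand of $V$ carries weight $k+i$. For $\phi_k$ to be defined at a given $p\in X$ with stabiliser $\ZZ/m\ZZ$ it suffices that some $L^{k+i}$ with $k+i\equiv 0\pmod m$ admits an invariant section non-vanishing at $p$; since $m\mid\ord(X)\le M$, such a residue occurs in the range, and the usual Kodaira theorem applied to an ample power of $L$ on the underlying variety produces such a section for all $k\gg 0$.

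The crux of the argument is the orbifold embedding assertion at a point $p\in X$ with cyclic stabiliser $G=\ZZ/m\ZZ$. Local ampleness says the generator $\zeta$ acts on $L_p$ by a primitive $m$th root of unity, so after trivialising $L$ on a $G$-chart $U\subset\C^n$ with coordinates $(z_1,\ldots,z_n)$ of weights $(\lambda_1,\ldots,\lambda_n)$, a section of $L^{k+i}$ pulls back to a holomorphic function $f$ on $U$ with $f(\zeta z)=\zeta^{k+i}f(z)$. The stabiliser of $\phi_k(p)$ in $\PP(V)$ can be read off directly: its non-vanishing coordinates are exactly those with $k+i\equiv 0\pmod m$, and since $M+1\ge m+1$ contains two consecutive multiples of $m$ whose $\hcf$ equals $m$, the stabiliser is precisely $\ZZ/m\ZZ$, matching $G$. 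Since $\{k+i\bmod m:0\le i\le M\}$ exhausts $\ZZ/m\ZZ$, for each coordinate direction $z_j$ one chooses $i_j$ with $k+i_j\equiv\lambda_j\pmod m$; an equivariant peaking-section construction (using ampleness of $L^{\ord(X)}$ on the underlying variety) then produces, for $k\gg 0$, a section of $L^{k+i_j}$ whose Taylor expansion at $p$ begins with $z_j$. These sections become the coordinate functions in the weighted chart (\ref{chart}) of $\PP(V)$ containing $\phi_k(p)$, and so exhibit the lift of $\phi_k$ to $U$ as a $G$-equivariant holomorphic immersion, with the $G\hookrightarrow\C^*$ being exactly the one determined by the $G$-action on $L_p$.

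The main obstacle will be producing a single $k$ that works uniformly at all points of $X$ simultaneously. Each condition on $k$ (defined everywhere, injective on points, $d\phi_k$ injective, equivariant peaking at each orbifold stratum) is open in $p$ and holds at any fixed $p$ for $k\gg 0$; a standard Noetherian/compactness argument over the compact $X$ upgrades these pointwise bounds to a uniform $k_0$. Point separation for distinct $x\ne y$ reduces by multiplication with invariant sections to the usual Kodaira separation statement for the ordinary ample bundle $L^{\ord(X)}$ on the underlying variety. Assembled together, these show that $\phi_k$ is a well-defined orbifold embedding for all $k\ge k_0$, completing the proof.
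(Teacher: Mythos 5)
Your overall strategy is the one the paper uses (invariant sections of the various $L^{k+i}$ generating $\O_y$ and $\m_y/\m_y^2$ upstairs in a chart, a uniform $k$, and the tautological weight-$(k+i)$ identification of $L^{-1}_x$ with the fibre of $\O_{\PP(V)}(-1)$ giving $\phi_k^*\O_{\PP(V)}(1)\cong L$), but one step in your ``crux'' paragraph is false. You assert that $\{k,\dots,k+M\}$, having $M+1\ge m+1$ elements, ``contains two consecutive multiples of $m$'', and conclude that the stabiliser of $\phi_k(p)$ in $\PP(V)$ is precisely $\ZZ/m\ZZ$. Neither statement holds in general. If $M=\ord(X)=m$ and $k\not\equiv 0\ (\mod m)$ the range contains exactly one multiple of $m$, say $k+i_0$; since every section of $L^{k+i}$ with $m\nmid k+i$ vanishes at $p$ (its value at the lift $y$ lies in the invariant part of the fibre $L_y^{k+i}$, which is zero because the stabiliser acts faithfully on $L_y$), the non-vanishing coordinates of $\phi_k(p)$ all have weight $k+i_0$, and the stabiliser of $\phi_k(p)$ in $\PP(V)$ is then $\ZZ/(k+i_0)\ZZ$, not $\ZZ/m\ZZ$. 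Concretely, for $X=\PP(1,2)$, $L=\O(1)$, $M=2$ and $k$ odd, the $\ZZ/2\ZZ$-point maps to the point whose only non-zero coordinate is the evaluation against $y^{(k+1)/2}\in H^0(L^{k+1})$, a point of $\PP(V)$ with stabiliser $\ZZ/(k+1)\ZZ$. So the proposition cannot be proved by matching the stabiliser of $x$ with that of $\phi_k(x)$; for general large $k$ they simply do not match.

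Fortunately that claim is not needed, and the rest of your argument survives once it is removed. What must be shown (and what the paper's proof shows) is that the lift of $\phi_k$ into the standard chart $\{x_j=1\}$ of $\PP(V)$, defined by a weight-$(k+i_0)$ coordinate non-vanishing at $\phi_k(p)$, is an embedding of the chart $U$ of $X$ which is equivariant for the monomorphism $\ZZ/m\ZZ\into\ZZ/(k+i_0)\ZZ$ determined by the action on $L_p$ --- exactly the formulation used in the paper's gluing condition for charts, and exactly what your equivariant peaking sections of the $L^{k+i_j}$ (one for each weight of $\m_y/\m_y^2$) provide, since they generate $\O_y$ and $\m_y/\m_y^2$. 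The orbifold structure induced on the image is then the quotient of the image branch by \emph{its} stabiliser, which is $\ZZ/m\ZZ$ even when the ambient stabiliser of $\phi_k(p)$ is larger: in the example above the extra elements of $\ZZ/(k+1)\ZZ$ move the branch off itself because coordinates of weight coprime to $k+1$ appear in the chart. With this correction your argument coincides with the paper's; the remaining ingredients --- uniformity of $k$ via cohomology vanishing for the bounded family of pushed-down sheaves on the underlying variety, point separation, and the equivariant inclusion $L^{-1}_x\into\bigoplus_iH^0(L^{k+i})^*$ giving the pullback of $\O_{\PP(V)}(1)$ --- are as in the paper.
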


\begin{proof}
Fix $x\in X$. It has stabiliser group $\ZZ/m\ZZ$ for some $m\ge1$, and a
local orbifold chart $U/(\ZZ/m\ZZ)$. Let $y\in U$ (with maximal ideal $\mathfrak
m_y$) map to $x$, and decompose
$\mathfrak m_y/\mathfrak m_y^2=\oplus_lV^l$ into weight spaces.
Since we have chosen the indexing set for $i$ to range over at least a full period
of length $m$, at least one of the $L^{k+i}_y$ has weight 0 and,
for each $l$, there is at least one $i_l$ in the indexing set such that $L^{k+i}_y
\otimes V^{i_l}$ has weight 0.

Therefore each of these $\ZZ/m\ZZ$-modules has invariant local generators,
defining local sections of the appropriate power of $L$ on $X$. For $k\gg0$
these extend to global sections, by ampleness. (The pushdowns of the powers
of $L$
from the orbifold to the underlying scheme give sheaves which all come from
a finite collection of sheaves tensored by a line bundle. For $k\gg0$
this line bundle becomes very positive, and so eventually has no cohomology.
This value of $k$ can be chosen uniformly for all $y$ by cohomology
vanishing for a \emph{bounded} family of sheaves on a scheme.)

Therefore, trivialising $L$ locally, the sections generate $\O_y$ and $\mathfrak
m_y/\mathfrak m_y^2$, so the pullback of the local functions on $\PP(V)$
(the polynomials in $(x_i)_{i\ne j}$ on the orbifold chart \eqref{chart})
generate the local functions on $U$. It follows that the map is an embedding
for large $k$. \medskip

Invariantly, the map \eqref{WKE} can be described as follows. Any lift $\tilde
x\in L^{-1}_x$
of $x$ is a linear functional on $L_x$. Similarly $\tilde x^{\otimes(k+1)}$
is a linear functional on $L_x^{k+i}$. Composed with the evaluation map
$\ev_x^{k+i}\colon H^0(L^{k+i})\to L^{k+i}_x$ gives
$$
\tilde x^{\otimes(k+1)}\circ\ev_x^{k+i}\,\colon\ H^0(L^{k+i})\to\C.
$$
Therefore
$$
\oplus_i\big(\tilde x^{\otimes(k+1)}\circ\ev_x^{k+i}\big)\,\in\ \bigoplus_iH^0(L^{k+i})^*=V
$$
is a well defined point, with no $\C^*$-scaling ambiguities or choices. In
other words \eqref{WKE} lifts to a natural $\C^*$-equivariant embedding of
the orbi-line
\begin{equation} \label{embedline}
L^{-1}_x\ \hookrightarrow\ \bigoplus_iH^0(L^{k+i})^*
\end{equation}
onto the $\C^*$-orbit over the point \eqref{WKE}.
This makes it clear that under this weighted Kodaira embedding, the
pullback of the $\O_{\PP(V)}(-1)$ orbifold line bundle over $\PP(V)$ is $L^{-1}$.
\end{proof}

\begin{rmk} \label{rmk:pullback}
That $\phi_k^*\O_{\PP(V)}(-1)=L^{-1}$, even though the embedding uses
the sections of $L^k,\ldots,L^{k+M}$ and not those of $L$,
follows from the fact that we give $H^0(L^{k+i})^*$ weight $k+i$.
This might come as a surprise and appear to contradict what we
know about Kodaira embedding for manifolds. For instance, suppose we embed
the \emph{manifold} $\PP^1$ using $\O(2)$. Under the normal Kodaira embedding
we get a conic in $\PP^2=\PP(H^0(\O_{\PP^1}(2)^*)$ such that the pullback
of $\O_{\PP^2}(-1)$ is $\O_{\PP^1}(-2)$.

However, from the above orbifold
perspective, this is \emph{not} an embedding
of $\PP^1$, but of the orbifold $\PP^1/(\ZZ/2\ZZ)$, where the $\ZZ/2\ZZ$-action
is trivial. We see this as follows. At the level of line bundles \eqref{embedline},
it is an
embedding of $\O_{\PP^1}(-1)\big/(\ZZ/2\ZZ)$ into $\O_{\PP^2}(-2)$,
where the $\ZZ/2\ZZ$-action is by $-1$ on each fibre. As a manifold this
quotient is indeed $\O_{\PP^1}(-2)$, but as an orbifold it is instead an
orbifold line bundle over the orbifold $\PP^1/(\ZZ/2\ZZ)$, where the $\ZZ/2\ZZ$-action
is trivial.
\end{rmk}

\begin{rmk}
When we began this project in early 2006 we were using a different, perhaps more
natural, weighted projective embedding. We embedded in the same way in
$$
\PP\Big(\bigoplus_iH^0(L^{ik})^*\Big),
$$
where we give $H^0(L^{ik})^*$ weight $i$ (\emph{not} $ik$). (Notice how this
cures the problem with Veronese embeddings described in Remark \ref{rmk:pullback} above.)
This can also be shown to pull back the orbifold structure of weighted projective space to that of $X$ when $L$ is ample, and to pull $\O(1)$ back to $L^k$.   However the corresponding Bergman kernel turns out not to be relevant to constant scalar curvature orbifold K\"ahler metrics. We learnt about the related alternative embedding \eqref{WKE} from Dan Abramovich; see \cite{Abramovich-Hassett}. The idea of using weighted projective embeddings certainly goes back further to Miles Reid; see for instance \cite{ReidGraded}.
\end{rmk}

\section{OrbiProj} It is similarly simple to write down an orbifold version of the Proj construction, using
the whole graded ring $\oplus_kH^0(L^k)$ at once. Given a finitely generated graded ring $R=\oplus_{k\ge0}R_k$ (not necessarily generated in degree 1!) we can form the scheme
$\Proj R$ in the usual way \cite[Proposition II.2.5]{hartshorne(77):algeb_geomet}. However this loses information (for instance we could throw
away all the graded pieces except the $R_{nk},\,k\gg0$, and get the same result).

We endow $\Proj R$ with an orbischeme structure by describing the orbischeme charts.
Fix a homogeneous element $r\in R_+$ and consider the Zariski-open subset $\Spec R_{(r)}
=(\Proj R)\take\{r=0\}$. (As usual $R_{(r)}$ is the degree zero part of the localised
ring $r^{-1}R$.) Then
$$
\Spec\,\frac R{(r-1)}\ \longrightarrow\ \Spec R_{(r)}
$$
is our orbichart. Here $R/(r-1)$ is the quotient of $R$ (thought of
as a ring and forgetting the grading) by the ideal $(r-1)$. The map from $R_{(r)}$
sets $r$ to $1$. \smallskip

More simply but less invariantly, pick homogeneous generators and relations for the graded
ring $R$. Then $\Proj R$ is embedded in the weighted projective space on the generators,
cut out by the equations defined by the relations.

\medskip
Given a projective scheme $(X,L)$ and a Cartier divisor $D\subset X$, this gives a very
direct way to produce Cadman's $r$th root orbischeme $\big(X,\big(1-\frac1r\big)D\big)$
\cite{Cadman}. This
has underlying scheme $X$ but with stabilisers $\ZZ/r\ZZ$ along $D$, and in the above
notation it is simply
\begin{equation} \label{Cadman}
\left(X,\Big(1-\frac1r\Big)D\right)\ =\ \Proj\ \bigoplus_{k\ge0}H^0\Big(X,\O\left(\Lfloor\frac kr\Rfloor D\right)\otimes L^k\Big).
\end{equation}
The hyperplane line bundle $\O(1)\otimes L^{-1}$ on this Proj is $\O\big(\frac1rD\big)$. Picking generators and relations for the above graded
ring we see the $r$th root orbischeme very concretely, cut out by equations in weighted
projective space.

\begin{rmk} \label{globCstar}
Although orbifolds need not be global quotients by \emph{finite} groups, we see that
polarised orbifolds \emph{are} global quotients of varieties by $\C^*$-actions. In terms
of the
weighted Kodaira embedding of Proposition \ref{Kod}, we take the total space of $L^{-1}$
over $X$, minus the zero section, and divide by the natural $\C^*$-action on the fibres.
Equivalently, we express the orbifold Proj of the graded ring $R$ as the quotient of
$\Spec(R)\take\{0\}$ by the action of $\C^*$ induced by the grading.
\end{rmk}

\section{Orbifold Riemann-Roch}
Suppose that $L$ is an orbifold polarisation on $X$. We will need the asymptotics
of $h^0(L^k)$ for $k\gg0$. These follow from
Kawazaki's orbifold Riemann-Roch theorem \cite{kawasaki:79:rieman_roch_theor_for_compl_v_manif},
or To\"en's for Deligne-Mumford stacks \cite{toen:99:theor_de_rieman_roch_pour},
and some elementary algebra (see for example \cite{reid:87:young_guide_to_canon_singul}
in the well formed case). Alternatively they follow from the weighted Bergman
kernel expansion (see \cite[Corollary 1.12]{ross_thomas:weigh_bergm_kernel_orbif}), or
by embedding
in weighted projective space and taking hyperplane sections in the usual way. The result is that
\begin{equation} \label{orbRR}
h^0(L^k)\ =\ \frac{\int_Xc_1(L)^n}{n!}\,k^n\ -\ \frac{\int_Xc_1(L)^{n-1}.c_1(K_{\orb})}
{2(n-1)!}\,k^{n-1}\ +\ \tilde
o(k^{n-1}).
\end{equation}
Here and in what follows we define $\tilde o(k^{n-1})$ to mean functions of $k$ that
can be written as  $r(k)\delta(k)+O(k^{n-2})$, where $r(k)$ is a polynomial of degree
$n-1$ and $\delta(k)$ is periodic in $k$ with period $m=\ord(X)$ and \emph{average zero}:
$$
\delta(k)=\delta(k+m), \qquad \sum_{u=1}^m \delta(u)=0.
$$
Therefore the average of $\tilde o(k^{n-1})$
over a period is in fact $O(k^{n-2})$, and we think of it as being a lower
order term than the two leading ones of \eqref{orbRR}.

Here we are also using integration of Chern-Weil forms on orbifolds (or intersection
theory on DM stacks). Of course integration works for orbifolds just as
it does for manifolds; it is defined in local charts,
but then the local integral is divided by the size of the group. It also extends easily
to orbischemes, just as usual integration works on schemes once we weight by local multiplicities.
\medskip

We give a simple example which nonetheless illustrates a number of the issues we have
been considering.

\begin{example}
  Let $\ZZ/m\ZZ$ act on ordinary $\PP^1$ and the tautological line bundle over it by
  $\lambda\cdot [x,y] = [\lambda x,y]$. Then the quotient $X$ is naturally an orbifold
  with $\ZZ/m\ZZ$ stabilisers at the two points $x=0$ and $y=0$. And the quotient of
  $\O(-1)$ is naturally an orbifold line bundle $L_X^{-1}$ over $X$.
  
  However $L_X$ is not locally ample at $x=0$, since the above action is trivial on the
  fibre over $x=0$. So we ``contract" the orbifold structure of $X$ at this point to
  produce another orbifold $Y$ by
  ignoring the stabiliser group at $x=0$ and thinking of it locally as a manifold. Only
  the orbifold point $y=0$ survives, and $L_X$ automatically descends to an ample orbifold
  line bundle $L_Y$ on $Y$, to which orbifold Riemann-Roch \eqref{orbRR} should therefore
  apply.
  
  The sections of $L_Y^k$ (or those of $L_X^k$; they are the same) are the invariant sections of $\O_{\PP^1}(k)$, a basis for which is  $y^k, y^{k-m} x^m,\ldots,y^{k-m\Lfloor\!\frac{k}{m}\!\Rfloor}
x^{m\Lfloor\!\frac{k}{m}\!\Rfloor}$. In particular $Y=\PP\langle x^m,y\rangle=\PP(m,1)$
and $h^0(L^k) = \Lfloor\frac{k}{m}\Rfloor + 1$.

Writing this as $\frac{k}{m} + 1- \frac{m-1}{2m} + \delta(k)$, where $\delta$ is periodic with average zero, we find
$$
h^0(L^k)=\frac{k}{m} -\frac12\left(-2+\left(1-\frac1m\right)\right) + \delta(k)=k\deg L-\frac12\deg K_{\orb}+\delta(k).
$$
Hence, as expected, the single $\ZZ/m\ZZ$-orbifold point of $Y$ adds $1-1/m$ to the degree
of $K_{\orb}$, and the other orbifold point of $X$ does not show up.
\end{example}

\subsection{Equivariant case} \label{ERR}
Fix a polarised orbifold $(X,L)$ as above, but now with a $\C^*$-action on $L$ linearising one on $X$. We need a similar expansion for the weight of a $\C^*$-action on $H^0(L^k)$. Instead of using the full equivariant Riemann-Roch
theorem we follow Donaldson in deducing what we need by using $\PP^1$ to approximate
$B\C^*=\PP^\infty$ and applying the above orbifold Riemann-Roch asymptotics to the total space of the associated bundle over $\PP^1$.

So let $\O_{\PP^1}(1)^*$ denote the principal $\C^*$-bundle over $\PP^1$ given by the complement of the zero-section in $\O(1)$. Form the associated $(X,L)$-bundle
$$
(\mathcal X,\mathcal L):=\O_{\PP^1}(1)^*\times\_{\C^*}(X,L).
$$
Let $\pi\colon\mathcal X\to\PP^1$ denote the projection.
Then it is clear that $\pi_*\mathcal L^k$ is the associated bundle of the $\C^*$-representation
$H^0(X,L^k)$. Splitting the latter into one dimensional weight spaces splits the former
into line bundles. A line with weight $i$ becomes the line bundle $\O(i)$. It follows
that the \emph{total weight} (i.e. the weight of the induced action on the determinant)
of the $\C^*$-action on $H^0(X,L^k)$ is the first Chern class of $\pi_*\mathcal L^k$.
Therefore
$$
w(H^0(X,L^k))=\chi(\PP^1,\pi_*\mathcal L^k)-\rk(\pi_*\mathcal L^k)
=\chi(\mathcal X,\mathcal L^k)-\chi(X,L^k).
$$
In particular orbifold Riemann-Roch on $\mathcal X$ and $X$ show that this has an expansion $b_0k^{n+1}+b_1k^n+\tilde o(k^n)$, where $b_0=\int_{\mathcal X}\frac{c_1(\mathcal L)^n}{(n+1)!}$\,. \medskip

We can express $b_0$ as an integral over $X$ as follows. Take a hermitian metric $h$ on $L$ which is invariant under the action of $S^1\subset\C^*$ and which has positive curvature $2\pi\omega$.
Let $\sigma$ denote the resulting connection 1-form on the principal $S^1$-bundle given by the unit sphere bundle $S(L)$ of $L$.

Differentiating the $S^1$-action gives a vector field $v$ on $S(L)$. Then $\sigma(v)$ is the pullback of a function $H$ on $X$. With respect to the symplectic form $\omega$, this $H$ is a Hamiltonian for the $S^1$-action on $X$.

Write $(\mathcal X,\mathcal L)$ as the associated bundle to the $S^1$-principal bundle $S(\O_{\PP^1}(1))$ as follows,
$$
(\mathcal X,\mathcal L)=S(\O_{\PP^1}(1))\times\_{S^1}(X,L).
$$
The Fubini-Study connection on $\O_{\PP^1}(1)$ and the connection $\sigma$ on $L$ induce natural connections on $\mathcal X\to\PP^1$ and on $\L\to\mathcal X$. In \cite [Section 5.1]{donaldson:05:lower_bound_calab_funct} Donaldson shows that the latter has curvature $H\omega_{FS}+\omega$. (Here $\omega_{FS}$ is pulled back from $\PP^1$, and we think of $\omega$ as a form on $\mathcal X$ by using its natural connection over $\PP^1$ to split its tangent bundle as $T\mathcal X=T\PP^1\oplus TX$.)
Therefore $b_0$ equals
$$
\frac1{(n+1)!}\int_{\mathcal X}(H\omega_{FS}+\omega)^{n+1}=\frac{n+1}{(n+1)!}\int_{\PP^1}\omega_{FS} \int_XH\omega^n=\int_XH\frac{\omega^n}{n!}. 
$$
This proves

\begin{prop}\label{prop:leadingtermweight}
The total weight of the $\C^*$-action on $H^0(L^k)$ is 
\[ w(H^0(X,L^k))=b_0k^{n+1}+b_1k^n+\tilde o(k^n),\quad\text{where }\
b_0=\int_X H\,\frac{\omega^n}{n!}\,.\]
\end{prop}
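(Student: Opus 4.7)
The plan is to follow Donaldson's approximation trick: replace the classifying space $B\C^*=\PP^\infty$ by $\PP^1$ and reduce the equivariant statement to the non-equivariant orbifold Riemann-Roch formula \eqref{orbRR} applied to the total space of the associated bundle. The text preceding the proposition already assembles most of the ingredients, so my proposal is essentially to verify that those ingredients fit together.

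First I would form the associated bundle $(\mathcal X,\mathcal L):=\O_{\PP^1}(1)^*\times_{\C^*}(X,L)$ with projection $\pi\colon\mathcal X\to\PP^1$. Since $\C^*$ acts on the fibres of $\O_{\PP^1}(1)^*\to\PP^1$ with weight one, the pushforward $\pi_*\mathcal L^k$ is the associated vector bundle of the $\C^*$-representation $H^0(X,L^k)$. Splitting $H^0(L^k)$ into its one-dimensional $\C^*$-weight spaces, a weight-$i$ summand contributes a copy of $\O_{\PP^1}(i)$ to $\pi_*\mathcal L^k$, so
\[
c_1(\pi_*\mathcal L^k)=w(H^0(X,L^k)).
\]
Combining this with Riemann-Roch on $\PP^1$ (giving $\chi(\PP^1,\pi_*\mathcal L^k)=w(H^0(X,L^k))+\rk(\pi_*\mathcal L^k)$) and the Leray spectral sequence identity $\chi(\mathcal X,\mathcal L^k)=\chi(\PP^1,\pi_*\mathcal L^k)$ (which will hold for $k\gg0$ by Serre vanishing and ampleness of $\mathcal L$ in the fibre direction) yields the identity
\[
w(H^0(X,L^k))=\chi(\mathcal X,\mathcal L^k)-\chi(X,L^k).
\]

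Next I would apply the orbifold Riemann-Roch expansion \eqref{orbRR} to both $\mathcal X$ (which is an orbifold of dimension $n+1$, fibred over the smooth $\PP^1$ with orbifold fibre $X$) and $X$. This immediately gives an expansion of the form $b_0k^{n+1}+b_1k^n+\tilde o(k^n)$, with leading coefficient
\[
b_0=\int_{\mathcal X}\frac{c_1(\mathcal L)^{n+1}}{(n+1)!},
\]
the $\chi(X,L^k)$ term being of lower order and thus absorbed into the remainder. Here I need that the ``$\tilde o$'' remainder classes are closed under the operation of subtracting an $O(k^n)$ quantity with no periodic tail, which is clear from the definition.

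Finally I would express $b_0$ as an integral over $X$ via the moment map computation outlined in the excerpt. Choose an $S^1$-invariant hermitian metric $h$ on $L$ with curvature $2\pi\omega$, let $\sigma$ be the resulting connection form on $S(L)$, and let $H$ be the function on $X$ defined by $\pi^*H=\sigma(v)$ where $v$ generates the $S^1$-action; this $H$ is the Hamiltonian for the action on $(X,\omega)$. Then by Donaldson's formula \cite[Section 5.1]{donaldson:05:lower_bound_calab_funct}, the natural induced connection on $\mathcal L\to\mathcal X$ has curvature $H\omega_{FS}+\omega$, where $\omega$ is viewed as a form on $\mathcal X$ via the natural horizontal splitting. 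Expanding $(H\omega_{FS}+\omega)^{n+1}$ and using that $\omega_{FS}^2=0$ on $\PP^1$, only the cross term survives and integration over the $\PP^1$ fibre direction gives
\[
b_0=\frac{1}{(n+1)!}\int_{\mathcal X}(n+1)H\omega_{FS}\wedge\omega^n=\int_X H\,\frac{\omega^n}{n!},
\]
completing the proof. The only slightly delicate point I anticipate is checking that Donaldson's curvature identity, proved for manifolds, carries over to the orbifold setting; this should however be purely local in the fibre direction and so reduces to the manifold argument applied in an orbifold chart.
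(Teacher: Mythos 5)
Your proposal is correct and follows essentially the same route as the paper: form the associated $(X,L)$-bundle over $\PP^1$, identify $w(H^0(L^k))=\chi(\mathcal X,\mathcal L^k)-\chi(X,L^k)$, apply orbifold Riemann--Roch, and compute $b_0$ via Donaldson's curvature formula $H\omega_{FS}+\omega$ (you even write the leading term correctly as $\int_{\mathcal X}c_1(\mathcal L)^{n+1}/(n+1)!$, fixing a typo, and your extra remarks on Leray vanishing and the orbifold-local validity of the curvature identity only make explicit what the paper leaves implicit).
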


We will apply this to weighted projective space $X=\PP(V)$ and also to its sub-orbischemes, where the integral on the right must then take into account scheme-theoretic multiplicities and the possibility of generic stabiliser (so if an irreducible component of $X$ has generic stabiliser $\mathbb Z/m\mathbb Z$ then the integral over it is $\frac{1}{m}$ times the integral over the underlying scheme).

Finally we remark that working with $\O_{\PP^2}(1)$ in place of $\O_{\PP^1}(1)$ replaces
the trace of the infinitesimal action on $H^0(X,L^k)$ (i.e. the total weight) by the
trace of the \emph{square} of the infinitesimal action on $H^0(X,L^k)$, proving it equals
\begin{equation} \label{squaredweight}
c_0k^{n+2}+O(k^{n+1}) \quad\text{where}\ c_0=\int_XH^2\frac{\omega^n}{n!}\,.
\end{equation}

\section{Reducing to the reductive quotient}\label{chap:moduli}

To form a moduli space of polarised varieties $(X,L)$ one first embeds $X$ in projective
space $\PP$ with a high power of $L$, thus identifying $X$ with a point of the relevant
Hilbert scheme of subvarieties of $\PP$. It is easy to see that two points of the Hilbert scheme correspond to abstractly isomorphic polarised varieties if and only if they differ
by an automorphism of $\PP$. Therefore a moduli space of varieties can be formed by taking
the GIT quotient of the Hilbert scheme by the special linear group. (Different choices
of linearisations of the action give different notions of stability of varieties.)

By Proposition \ref{Kod} we can now mimic this for polarised orbifolds, first embedding
in a \emph{weighted} projective space $\PP$. The Hilbert scheme of suborbischemes of
$\PP$ has been constructed in \cite{olsson_starr:03:quot_funct_for_delig_mumfor_stack}.
Therefore we are left with the problem of quotienting this by the action of $\Aut(\PP)$.

At first sight this seems difficult because $\Aut(\PP)$ is not reductive.
Classical GIT works only for reductive groups (though a remarkable amount of the theory
has now been pushed through in the nonreductive case \cite{DoranKirwan}).

As a trivial example consider $\PP(1,2)$ embedded by the identity map in itself. The
automorphisms contain a nonreductive piece $\C$ in which $t\in\C$ acts by
\begin{equation} \label{nonred}
[x,y]\ \mapsto\ [x,y+tx^2].
\end{equation}
However this arises because $\PP(1,2)$ has not been Kodaira embedded as described in
Section \ref{okod}. Using \emph{all} sections of $H^0(\O(1))=\langle x\rangle$ and
$H^0(\O(2))=\langle x^2,y\rangle$ (not just $x$ and $y$) we embed instead via
$$
\PP(1,2)\into\PP(1,2,2),\qquad[x,y]\mapsto[x,x^2,y].
$$
Then the nonreductive $\C$ lies in a bigger, reductive subgroup of $\Aut(\PP(1,2,2))$.
Namely \eqref{nonred} can be realised as the restriction to $\PP(1,2)$ of the automorphism
$$
[A,B,C]\mapsto[A,B,C+tB]
$$
lying in the reductive subgroup $SL(H^0(\O(2)))\subset\Aut\PP(1,2,2)$. Of course it can
also be seen as the restriction of $[A,B,C]\mapsto[A,B,C+tA^2]$, another nonreductive
$\C$ subgroup, but the point is that our embedding has a stabiliser in $\Aut(\PP(1,2,2))$,
and this causes the two copies of $\C$ restrict to the same action. 

Having seen an example, the general case is actually simpler. Given a polarised
variety $(X,L)$, pick an isomorphism from $H^0(X,L^{k+i})$ to a fixed vector space $V^{k+i}$.
Then from Section \ref{okod} we get an
embedding of $X$ into $\PP(\oplus_i(V^{i+k})^*)$. This embedding is \emph{normal}
-- the restriction map $H^0(\O_{\PP}(k+i))\to H^0(\O_X(k+i))$ is an isomorphism by construction.
The next result says that the resulting point of the Hilbert scheme of $\PP$ is unique
up to the action of the \emph{reductive} group $\prod_iGL(V^{k+i})$.

\begin{prop}
Two normally embedded orbifolds $X_j\subset\PP(\oplus_i(V^{i+k})^*)$ are
abstractly isomorphic
polarised varieties if and only if there is $g\in\prod_iGL(V^{k+i})$ such that $g.X_1=X_2$.
\end{prop}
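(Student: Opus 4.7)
The crux is that the normality hypothesis identifies each $V^{k+i}$ canonically with the sections of the polarisation on $X_j$: restriction gives isomorphisms
\[
V^{k+i} \;\xrightarrow{\sim}\; H^0\bigl(\PP,\O_\PP(k+i)\bigr)\;\xrightarrow{\sim}\; H^0(X_j,L_j^{k+i}),
\qquad j=1,2,
\]
where $L_j := \O_\PP(1)|_{X_j}$. Thus both embeddings $X_j\hookrightarrow\PP$ are, tautologically, the weighted Kodaira embedding of Proposition \ref{Kod} performed with these two choices of identification. Any isomorphism $(X_1,L_1)\xrightarrow{\sim}(X_2,L_2)$ will then produce a change-of-basis element of $\prod_i GL(V^{k+i})$, and the naturality of the embedding in Proposition \ref{Kod} will guarantee that this element carries $X_1$ to $X_2$.

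The ``if'' direction is immediate: any $g=(g_i)\in\prod_i GL(V^{k+i})$ acts on $\PP$ and lifts (after rescaling each summand) to an orbi-line bundle automorphism of $\O_\PP(-1)$, so if $g.X_1=X_2$, restriction is an abstract isomorphism of polarised orbifolds.

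For the ``only if'' direction, suppose $\phi\colon(X_1,L_1)\xrightarrow{\sim}(X_2,L_2)$ is an abstract isomorphism of polarised orbifolds; fix a lift $\Phi\colon\phi^*L_2\xrightarrow{\sim} L_1$. Pullback of sections gives linear isomorphisms $\phi^*\colon H^0(X_2,L_2^{k+i})\xrightarrow{\sim} H^0(X_1,L_1^{k+i})$ for every $i$. Composing with the normality isomorphisms above defines $g_i\in GL(V^{k+i})$ and hence $g=(g_i)\in\prod_iGL(V^{k+i})$. To see that $g.X_1=X_2$ I would use the invariant form \eqref{embedline} of the embedding: a point $x\in X_1$ is sent to the $\C^*$-orbit of $\bigoplus_i\tilde x^{\otimes(k+i)}\circ\ev_x^{k+i}$ for $\tilde x\in L_{1,x}^{-1}$. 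Under $\Phi$, the lift $\tilde x$ transports to a lift of $\phi(x)$ in $L_{2,\phi(x)}^{-1}$, and naturality of pullback gives
\[
\tilde x^{\otimes(k+i)}\circ\ev_x^{k+i}\circ(\phi^*)^{-1} \;=\; (\Phi\tilde x)^{\otimes(k+i)}\circ\ev_{\phi(x)}^{k+i}
\]
as functionals on $H^0(X_2,L_2^{k+i})$. Translating both sides back through the identifications with $V^{k+i}$ yields exactly that the transpose of $g_i^{-1}$ maps the point of $\PP$ representing $x$ to the one representing $\phi(x)$; hence $g.X_1=X_2$.

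The only real obstacle is the bookkeeping of duals, of the chosen trivialisations of $L_{j,x}$, and of the scalar ambiguity in $\Phi$. The first two are handled by the fact that $\ev_x^{k+i}$ is well defined up to the $\C^*$-action on $V$ that is already quotiented out in $\PP(V^*)$; the last contributes only an overall central element of $\prod_iGL(V^{k+i})$ (acting on the $i$th summand with weight $k+i$), which again acts trivially on $\PP$. Thus the construction of $g$ is canonical modulo this scalar, and the argument goes through.
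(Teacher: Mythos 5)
Your proposal is correct and follows essentially the same route as the paper's own (much terser) proof: normality identifies each $V^{k+i}$ with $H^0(X_j,L_j^{k+i})$, an abstract isomorphism of the polarised orbifolds then produces the element $g\in\prod_iGL(V^{k+i})$ carrying one embedding to the other, and the converse is immediate because the group action preserves $\O_{\PP}(1)$. Your explicit verification via the evaluation functionals of \eqref{embedline}, and the bookkeeping of the scalar ambiguity, simply spells out what the paper leaves implicit.
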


\begin{proof}
If the $(X_j,\O_{X_j}(1)))$ are abstractly isomorphic then their spaces of sections $H^0(\O_{X_j}(k+i))$
are isomorphic vector spaces. Under this isomorphism, the two identifications
$H^0(\O_{X_j}(k+i))\cong V^{i+k},\ j=1,2$, therefore differ by an element $g_{k+i}\in GL(V^{k+i})$. Then $g:=\oplus_{i\,}g_{k+i}$ takes $X_1\subset\PP$ to $X_2$.

The converse is of course trivial, needing only the fact that the action of $\prod_iGL(V^{k+i})$
preserves the polarisation $\O_{\PP}(1)$.
\end{proof}

Therefore one can set up a GIT problem to form moduli of orbifolds, just as Mumford did
for varieties.

Firstly one needs Matsusaka's big theorem for orbifolds,
to ensure that for a fixed $k\gg0$, uniform over all smooth polarised orbifolds of the
same topological type, the orbifold line bundles $L^{k+i}$ have
the number of sections predicted by orbifold Riemann-Roch. This follows by pushing down
to the underlying variety, which has only quotient, and so rational, singularities, to
which \cite[Theorem 2.4]{Matsusaka} applies.

We can thus embed them all in the same weighted projective space. Then one can remove
those suborbifolds of weighted projective space whose embedding is non-normal,
since they are easily seen to be unstable for the action of $\prod_iGL(V^{k+i})$. Thus by
the above result, orbits on the Hilbert scheme really corresponds to isomorphism classes
of polarised orbifolds.
Finally one should compactify with orbischemes (or Deligne-Mumford stacks)
to get proper moduli spaces of stable objects. We do not pursue this here as only smooth
orbifolds and their stability are relevant to cscK metrics, but many of the foundations
are worked out in \cite{Abramovich-Hassett}. (Their point of view is slightly
different from ours -- their notion of stability is related to the minimal model programme
rather than GIT, and they form moduli using the machinery of stacks.)

\chapter{Metrics and balanced orbifolds}\label{chap:metrics}

Our next point of business is to generalise the Fubini-Study metric to weighted projective
space.    Anticipating the application we have in mind, fix some $k\ge 0$ and let
$V=\oplus_{i=1}^M V^{k+i}$ be a finite dimensional graded vector space.  By
a metric $|\cdot|_V$ on a $V$ we will mean a hermitian metric
which makes the vector spaces $V^p$ and $V^q$ orthogonal for $p\neq
q$.  Thus a metric on $V$ is simply given by a hermitian metric
$|\cdot|_{V^p}$ on each $V^p$.  By a graded orthonormal basis $\{t_\alpha^p\}$
for $V$ we mean an orthonormal basis $\{t^p_1,\ldots,t^p_{\dim V^p}\}$ for
$V^p$ for each $p=k+1,\ldots,k+M$.

As usual let $\PP(V)$ be the weighted projective space obtained by declaring that $V^{k+i}$
has weight $k+i$.  The unitary group $U:=\prod_i U(V^{k+i})$ acts on $V$ with moment
map
\begin{equation}
\mu\_U(v) =  \frac{1}{2} \left(v\otimes v^* - \bigoplus_i c_i \Id_{V^{k+i}}\right)
\ \in\ \oplus_i\,\mathfrak u(V^{k+i})^*. \label{eq:momentmapu}
\end{equation}
Here the $c_i$ are arbitrary real constants, which we will take to be positive, and $v^*\in
V^*$ is the linear functional corresponding to $v$ under the hermitian inner
product.  Therefore the $U(1)$ action on $V$ which acts on $V^{k+i}$ with weight $k+i$ has moment map $\mu_{U(1)} = \trw \circ\,\mu_U$, where $\trw\colon \mathfrak u^* \to \mathfrak
u(1)^*$ is the projection $\trw(\oplus_i A^i) = \sum_i (k+i) \tr(A^i)$.  Thus if  $v=\oplus_i v_{k+i}$, then
\begin{equation} \label{cdef}
  \mu\_{U(1)}(v) = \frac12\!\left(\sum_i (k+i)|v_{k+i}|^2 - c\right)\!\!,\quad
  \text{where }\ c:=\sum_i (k+i)c_i\dim V^{k+i}.
\end{equation}

\begin{defn}\label{def:FS}
The \emph{Fubini-Study} orbifold K\"ahler metric $\omega_{FS}$ associated
to \mbox{$|\cdot|_V$} is $\frac1c$ times the metric on $\PP(V)$ which results from viewing it as the
symplectic quotient $\mu_{U(1)}^{-1}(0)/U(1)$ and taking the K\"ahler reduction of the
metric $|\cdot|_V$ under the isometric action of $U$.
\end{defn}

This is an orbifold K\"ahler metric: on the orbifold chart
\eqref{chart} it pulls back to a genuine K\"ahler metric on $\C^{n-1}$. In fact it follows
from Lemma \ref{lem:fubinistudyreduction} below that it is the curvature of a hermitian
metric $h_1$ on the orbifold line
bundle $\O_{\PP(V)}(1)$. The dual of this hermitian metric is the one of three natural
candidates
for the name of Fubini-Study metric on $\O_{\PP(V)}(-1)$. A second natural choice $h_2$
is given by $|v|^2_{h_2} = \sum_{i} |v_{k+i}|^{\frac2{k+i}}$ (note that $|v|^2 = \sum_i
|v_{k+i}|^2$
does not scale correctly under the action of $\C^*$ to define a hermitian metric).
However it is the third candidate $h_3=h_{FS}$ below that we choose. It should be noted
that only on an unweighted projective space do all three agree and metrics.  It seems that $h_{FS}$ is a special case of the more general metrics on
line bundles over toric varieties constructed by Batyrev-Tschinkel \cite[Section 2.1]{batyrev_tschinkel:95:ration_point_bound_heigh_compac_anisot_tori}.


\begin{defn}\label{def:FSfibre}
The \emph{Fubini-Study metric} $h_{FS}$ on $\O_{\PP(V)}(-1)$ is the hermitian
metric defined by setting the points of $\mu_{U(1)}^{-1}(0)$ to have norm
1. Therefore
\begin{equation*}
|v|_{h_{FS}} := \frac{1}{\lambda(v)}\,,
\end{equation*}
where $\lambda(v).v$ is the unique point of $\mu_{U(1)}^{-1}(0)$ in the orbit $(0,\infty).v$.
That is,  by (\ref{cdef}), $\lambda(v)$ is the unique positive real solution to
\begin{equation}
  \sum_i (k+i)\lambda(v)^{2(k+i)} |v_{k+i}|^2 = c.\label{eq:defoflambda}
\end{equation}

We also use $h_{FS}$ to denote the induced metrics on $\O_{\PP(V)}(i)$.\medskip

The discrepancy between $\omega_{FS}$ and the curvature $2\pi\omega_{h_{FS}}:=i\partial\overline\partial\log h_{FS}$ of the metric $h_{FS}$ on $\O_{\PP(V)}(1)$ can be
deduced from a result in \cite{biquard_gauduchon:97:hyper_kaehl_metric_cotan_bundl}.

\begin{lem}\label{lem:fubinistudyreduction}
We have
  \begin{equation} \label{eq:reductionmetric2}
  \omega_{FS}=\omega_{h_{FS}} + \frac i{2c}\partial\overline\partial f,
\end{equation}
where $f\colon\PP(V)\to\R$ is the function
\begin{equation} \label{f}
f:=\sum_i\sum_{\alpha} |t^i_{\alpha}|^2_{h_{FS}}.
\end{equation}
Here $\{t_\alpha^i\}$ is a $|\cdot|_V$-orthonormal basis of $V^*$, so each $t_\alpha^i$
defines a section of $\O_{\PP(V)}(i)$, whose pointwise $h_{FS}$-norm is what appears
in \eqref{f}.
\end{lem}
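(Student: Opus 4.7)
The plan is to work locally on $\PP(V)$ via a holomorphic section $\sigma\colon U\to V\setminus\{0\}$ of the quotient map $V\setminus\{0\}\to\PP(V)$, which equivalently trivialises $\O_{\PP(V)}(-1)$. The non-holomorphic smooth section $\tilde\sigma:=\lambda(\sigma)\cdot\sigma$ (where $\cdot$ is the weighted $\C^*$-action, so $\tilde\sigma_{k+i}=\lambda^{k+i}\sigma_{k+i}$) lands in $P:=\mu_{U(1)}^{-1}(0)$ by \eqref{eq:defoflambda}. Expanding the sum in the graded orthonormal basis and using that $|t^{k+i}_\alpha|_{h_{FS}}^2([v])=\lambda(v)^{2(k+i)}|t^{k+i}_\alpha(v)|^2$ gives the key identity
\[ f\circ\sigma \;=\; |\tilde\sigma|_V^2 \;=\; \sum_i \lambda(\sigma)^{2(k+i)}\,|\sigma_{k+i}|_{V^{k+i}}^2, \]
which will serve as the ``Kähler potential" of the correction term.

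By definition of the Kähler reduction, any smooth section of $P\to\PP(V)$ recovers $\omega_{\mathrm{red}}=c\,\omega_{FS}$ as its pullback of $\omega_V$, so $c\,\omega_{FS}=\tilde\sigma^*\omega_V$. On the other hand $h_{FS}$ is described by $|\cdot|_{h_{FS}}^2=\lambda(\sigma)^2$ in the local frame on $\O_{\PP(V)}(1)$ dual to $\sigma$, so that $2\pi\,\omega_{h_{FS}}=i\partial\bar\partial\log\lambda^2$ on $U$. I would then invoke the Biquard-Gauduchon Kähler reduction formula \cite{biquard_gauduchon:97:hyper_kaehl_metric_cotan_bundl}, which expresses $\tilde\sigma^*\omega_V$ as $\tfrac{i}{2}\partial\bar\partial|\tilde\sigma|_V^2$ plus correction terms arising from the non-holomorphicity of $\tilde\sigma$ (the radial direction of the $\C^*$-bundle) and measured by the moment map data. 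Substituting the above identity and simplifying using the defining equation
\[ \sum_i (k+i)\,\lambda(\sigma)^{2(k+i)}\,|\sigma_{k+i}|_{V^{k+i}}^2 \;=\; c, \]
whose left-hand side is constant on $\PP(V)$ and hence $\partial\bar\partial$-closed, the correction terms collapse to precisely $c\,\omega_{h_{FS}}=\tfrac{ic}{2\pi}\partial\bar\partial\log\lambda^2$. Rearranging and dividing by $c$ gives the formula.

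The main obstacle is this final collapse. In the unweighted case $\tilde\sigma=\lambda\sigma$ is a uniform rescaling and the computation is essentially one line, but under the weighted action the differentials $d\tilde\sigma_{k+i}=(k+i)\lambda^{k+i-1}\,d\lambda\,\sigma_{k+i}+\lambda^{k+i}\,d\sigma_{k+i}$ mix graded powers of $\lambda$ with derivatives of both $\lambda$ and $\sigma_{k+i}$ in a weight-dependent way, and no individual term pairs cleanly. What makes the answer come out is that applying $\partial\bar\partial$ to the defining constraint for $\lambda$ yields precisely the identity among $\partial\bar\partial\log\lambda$, $d\log\lambda\wedge d|\sigma_{k+i}|^2$ and $\partial\bar\partial|\sigma_{k+i}|^2$ needed to combine all cross terms into a single multiple of $\omega_{h_{FS}}$, with everything else absorbed into $\tfrac{i}{2}\partial\bar\partial f$; this is exactly the bookkeeping that the Biquard-Gauduchon formula packages.
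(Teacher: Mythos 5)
Your proposal follows essentially the same route as the paper's own proof: both hinge on the Biquard--Gauduchon reduction formula (their 3.1), the identification $f\circ\sigma=|\lambda(v).v|_V^2$ via the graded orthonormal basis and the definition of $h_{FS}$ on $\mu_{U(1)}^{-1}(0)$, and the recognition of $i\partial\overline\partial\log\lambda$, pulled back through a local holomorphic (multi)section, as the curvature contribution $\omega_{h_{FS}}$, so the argument is correct in structure. The only wrinkle is normalisation: you write $2\pi\omega_{h_{FS}}=i\partial\overline\partial\log\lambda^2$ whereas in the paper's convention the Biquard--Gauduchon correction term is $\frac{i}{2\pi}\partial\overline\partial\log\lambda(v)^c=c\,\omega_{h_{FS}}$, i.e.\ $2\pi\omega_{h_{FS}}=i\partial\overline\partial\log\lambda$ -- a factor-of-two bookkeeping point (norm versus norm squared of the fibre metric) worth fixing but not affecting the approach.
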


\begin{proof}
Let $p\colon V\take\{0\} \to \PP(V)$ be projection to the quotient. We use \cite[3.1]{biquard_gauduchon:97:hyper_kaehl_metric_cotan_bundl}; in their notation we set  $\chi$
to be the $c$\,th power homomorphism from $S^1$ to itself and shift our moment map by
$\frac c2$ to agree with theirs. The result is that the pullback
of the K\"ahler form produced by symplectic reduction is
\begin{equation}
 p^*(c\,\omega_{FS}) =  \frac i2\partial\overline\partial\,|\lambda(v).v|_V^2 + \frac i{2\pi}\partial\overline\partial \log \lambda(v)^c,
 \label{eq:reductionmetric}
\end{equation}
where $\lambda(v)\in(0,\infty)$ is defined as in \eqref{eq:defoflambda} so that $\lambda(v).v\in
\mu_{U(1)}^{-1}(0)$.

Over an open set of $\PP(V)$ pick a holomorphic section, or multisection, of $p$, lifting
$x$ to $v=v(x)$. Then the curvature of $h_{FS}$ on $\O_{\PP(V)}(-1)$ is $i\partial
\overline\partial\log|v|_{h_{FS}}$, which by Definition \ref{def:FSfibre} is
$i\partial\overline\partial\log\lambda(v)^{-1}$. Therefore the curvature of $\O_{\PP(V)}(1)$
is $i\partial\overline\partial\log\lambda(v)$ and we can rewrite \eqref{eq:reductionmetric}
(divided through by $c$) as
$$
p^*(\omega_{FS}) =  \frac i{2c}\partial\overline\partial\,|\lambda(v).v|_V^2+p^*\omega_{h_{FS}}.
$$
Then at $v\in V\take\{0\}$ lying over a point $x\in\PP(V)$ we calculate $|\lambda(v).v|_V^2$
as
$$
\sum_i|\lambda(v).v|_{V_i}^2 = \sum_i\sum_\alpha|t^i_\alpha(\lambda(v).v)|^2
=\sum_i\sum_\alpha|t^i_\alpha|^2_{h_{FS},x}\,.
$$
The last equality follows from the definition of $h_{FS}$ \eqref{def:FSfibre}, since
$\lambda(v).v$ lies in $\mu_{U(1)}^{-1}(0)$.
\end{proof}

\end{defn}

The restriction of $\mu_U$ (\ref{eq:momentmapu}) to $\mu_{U(1)}^{-1}(0)$
descends to $\PP(V)$ as the moment map $m$ for the induced action of $U/U(1)$ on $\PP(V)$:
  \begin{equation}
    m ([v]) = \frac{1}{2}\bigoplus_i \left(\lambda^{2(k+i)}(v)\,v_{k+i}\otimes
    v_{k+i}^* - c_i \Id_{V^{k+i}}\right),\label{eq:momementmapweightedprojcspace}
  \end{equation}
with $\lambda(v)$ is defined in \eqref{eq:defoflambda}. Integrating this allows us
to define a notion of balanced orbifolds. 
\begin{defn}\label{def:balanced}
Given an orbifold embedding
$X\subset \PP(V)$  define 
\[
M(X) = \int_X m\, \frac{\omega_{FS}^n}{n!}\,,
\]
where $m$ is the moment map from  \eqref{eq:momementmapweightedprojcspace}.     We say that an orbifold $X\subset \PP(V)$ is \emph{balanced} if
  $M(X)=0$.
\end{defn}


\begin{rmk}
  The balanced condition depends on $|\cdot|_V$ and on
  the choice of constants $c_i$.  Later we will choose specific
  constants to ensure a connection with scalar curvature.

Just as in the manifold situation \cite{donaldson(01):scalar_curvat_projec_embed,wang(04):momen_map_futak_invar_stabil_projec_manif}, $M$ is the
moment map for the action of $U/U(1)$ on Olsson and Starr's Hilbert scheme
\cite{olsson_starr:03:quot_funct_for_delig_mumfor_stack} of sub-orbischemes of $\PP(V)$
endowed with its natural $L^2$-symplectic form. To make sense of this statement one can
either work purely formally, make a precise statement at smooth points, or
restrict attention to a single orbit of
Aut$(\PP(V))$; the latter is smooth and all we will need in the application to constant
scalar curvature. For $X\subset\PP(V)$ and $v,w$ sections of $T\PP(V)|_X$ their pairing
with the symplectic form is defined to be
$$
\Omega(v,w):=\int_Xv\lrcorner\left(w\lrcorner\frac{\omega^{n+1}}{(n+1)!}\right).
$$
The moment map calculation
is the following. We let $A=\oplus_iA^{k+i}$ be a graded Hermitian matrix generating the
1-parameter subgroup $\exp(tA)$ of automorphisms of $\PP(V)$, inducing the vector field
$v_A$ on $\PP(V)$. Since $m_A:=\tr(mA)$ is a hamiltonian for $v_A$, we have
$v_A\lrcorner\,\omega=dm_A$. Moving in the Hilbert scheme down a vector field $v$ on $\PP(V)$ we have
\begin{eqnarray}
\left.\frac d{dt}\right|_{t=0}\!\!\tr(M(X)A) &=& \int_X\mathcal L_v\left(m_A
\frac{\omega^n}{n!}\right) \nonumber \\
&=& \int_Xv(m_A)\frac{\omega^n}{n!}+\int_Xm_Ad\left(v\lrcorner\frac{\omega^n}{n!}\right)
\nonumber \\
&=&  \int_X\omega(v,v_A)\frac{\omega^n}{n!}-\int_Xd(m_A)\wedge
\left(v\lrcorner\frac{\omega^n}{n!}\right) \nonumber \\
&=&  \int_X\omega(v,v_A)\frac{\omega^n}{n!}-\int_X(v_A\lrcorner\,\omega)\wedge
\left(v\lrcorner\frac{\omega^n}{n!}\right) \nonumber \\
&=&  \int_Xv\lrcorner\left(v_A\lrcorner\left(\frac{\omega^{n+1}}{(n+1)!}\right)\right)
=\Omega(v,v_A). \label{mmcalc}
\end{eqnarray}
\end{rmk}

To express the balanced condition in terms of sections of line bundles, fix a polarised orbifold $(X,L)$ with cyclic stabiliser groups.  Embed $X$ in weighted projective space
with $k\gg0$ as in Section \ref{okod}: 
\begin{equation}
\phi_k\colon X \into \PP(V)\ \text{ where }\ V = \bigoplus_{i=1}^M H^0(L^{k+i})^*\text{
and } L = \phi_k^* \O_{\PP(V)}(1). \label{eq:orbiembedding}
\end{equation}
A metric $|\cdot|_V$ on $V$ induces by Definition \ref{def:FSfibre} a Fubini-Study metric
on $\O(1)$, and so one on $L$ which we also denote by $h_{FS}$.
The next Lemma expresses the balanced condition in terms of
coordinates on $V$ given by a graded $|\cdot|_V$-orthonormal basis  $\{t_{\alpha}^{i}\}$,
where $t^i_\alpha\in H^0(L^{k+i})$.  To ease notation we write
\[\vol:=\int_X \frac{c_1(L)^n}{n!}\,.\]

\begin{lem}\label{lem:Mincoordinates}
With respect to these coordinates the matrix $M(X)=\oplus_i M^i(X)$  has entries
\[(M^i(X))_{\alpha\beta} = \frac{1}{2}\left(\int_X  (t^i_{\alpha},t^i_{\beta})_{h_{FS}}\,\frac{\omega_{FS}^n}{n!} -c_i\vol \delta_{\alpha\beta}\right).\]
\end{lem}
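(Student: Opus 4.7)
The strategy is a direct unwinding of definitions in the graded orthonormal basis $\{t_\alpha^i\}$. By linearity of the integral it suffices to compute, for each $i$, the matrix entries of the moment map $m^i([v])$ from \eqref{eq:momementmapweightedprojcspace}, identify them with the pointwise Fubini--Study inner product of sections, and then integrate against $\omega_{FS}^n/n!$.

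First I would compute the matrix of $v_{k+i}\otimes v_{k+i}^*$ in the basis of $V^{k+i}$ dual to $\{t_\alpha^i\}$: expanding $v_{k+i}=\sum_\alpha t_\alpha^i(v_{k+i})\,t_\alpha^{i*}$ gives $(\alpha,\beta)$-entry equal to $t_\alpha^i(v_{k+i})\,\overline{t_\beta^i(v_{k+i})}$. Combined with \eqref{eq:momementmapweightedprojcspace} this yields
\[
(m^i([v]))_{\alpha\beta}=\tfrac12\bigl(\lambda(v)^{2(k+i)}\,t_\alpha^i(v_{k+i})\,\overline{t_\beta^i(v_{k+i})}-c_i\,\delta_{\alpha\beta}\bigr).
\]
The key step is to recognise the first summand as the pointwise Fubini--Study inner product $(t_\alpha^i,t_\beta^i)_{h_{FS}}([v])$, with $t_\alpha^i,t_\beta^i$ viewed as sections of $\O_{\PP(V)}(k+i)$. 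This is immediate from Definition \ref{def:FSfibre}: the tautological lift $v\in\O_{\PP(V)}(-1)_{[v]}$ has $h_{FS}$-norm $1/\lambda(v)$, so on the induced metric on $\O_{\PP(V)}(k+i)$, a section defined by a weight-$(k+i)$ polynomial $F$ has pointwise norm $\lambda(v)^{k+i}|F(v)|$. Polarising this and specialising to the pair $t_\alpha^i$, $t_\beta^i$ (each of which depends only on the $V^{k+i}$-component of $v$ by orthogonality of the grading) delivers the identification.

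Finally, integrating over $X$ and pulling out the constant term gives
\[
(M^i(X))_{\alpha\beta}=\tfrac12\!\left(\int_X (t_\alpha^i,t_\beta^i)_{h_{FS}}\,\tfrac{\omega_{FS}^n}{n!}-c_i\,\delta_{\alpha\beta}\!\int_X\!\tfrac{\omega_{FS}^n}{n!}\right)\!,
\]
and the remaining volume integral is $\vol$: indeed, by Lemma \ref{lem:fubinistudyreduction}, $\omega_{FS}$ differs from the curvature $\omega_{h_{FS}}$ of $h_{FS}$ on $\O_{\PP(V)}(1)$ by the globally exact form $\tfrac{i}{2c}\partial\bar\partial f$, so $[\omega_{FS}]=c_1(\O_{\PP(V)}(1))$, whose pullback under $\phi_k$ is $c_1(L)$. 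The only real subtlety is step two, where one must carefully track how $h_{FS}$ transports between $\O_{\PP(V)}(-1)$ and its tensor powers via the scaling factor $\lambda(v)$; the remainder is bookkeeping.
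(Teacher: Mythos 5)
Your proposal is correct and follows essentially the same route as the paper's proof: express the moment map entries in the orthonormal coordinates, identify $\lambda(v)^{2(k+i)}t^i_\alpha(v)\overline{t^i_\beta(v)}$ with the pointwise inner product $(t^i_\alpha,t^i_\beta)_{h_{FS}}$ via Definition \ref{def:FSfibre}, and integrate over $X$. Your extra remark justifying $\int_X\omega_{FS}^n/n!=\vol$ through Lemma \ref{lem:fubinistudyreduction} makes explicit a normalisation the paper leaves implicit, but it is the same argument.
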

\begin{proof}
Given a point $x$ in $X$ let $\tilde{x}\in L_x^{-1}$ be any non-zero lift, and  write $t_{\alpha}^{i}(\tilde{x})$ for the complex number $(\tilde{x}^{\otimes (k+i)},t_\alpha^{i}(x))$. Then
\[ (t_\alpha,t_\beta)_{h_{FS}} = \lambda(\tilde{x})^{2(k+i)} t_\alpha^{i}(\tilde{x}) \overline{t_{\beta}^{i}(\tilde{x})}\]
where $\lambda(\tilde{x})$ is the positive solution to
$\sum_{i} (k+i) \lambda(\tilde{x})^{2(k+i)}\sum_{\alpha} |t_{\alpha}^{i}(\tilde{x})|^2 = c.$  Now the embedding of $X$ maps $x$ to the point with coordinates $[t_{\alpha}^i(x)]$, so in these coordinates $m(x) = \oplus_i m^i(x)$ where 
\[(m^i(x))_{\alpha\beta} = \frac{1}{2} \left(\lambda(\tilde{x})^{2(k+i)} t_{\alpha}^i(\tilde{x}) \overline{t_\beta^i(\tilde{x})}-c_i\delta_{\alpha\beta}\right) =\frac{1}{2}\left( (t^i_\alpha,t^i_\beta)_{h_{FS}} - c_i\delta_{\alpha\beta}\right)\]
and the result follows by integrating over $X$.
\end{proof}

The balanced condition can also be expressed in terms of hermitian metrics
on $L$. Let $\mathcal K(c_1(L))$ denote the orbifold K\"ahler metrics on
$X$ which are $(2\pi)^{-1}$ times the curvature of an orbifold hermitian metric on $L$. Define maps
\begin{diagram}
  \{ \text{hermitian metrics on $L$}\} \times \mathcal K(c_1(L)) &\pile{\rTo^{\quad \operatorname{Hilb}\quad} \\ \lTo_{\FS} }&   \{\text{metrics on } V\} \\
\end{diagram}
as follows:
\begin{itemize}
\item If $|\cdot|_V$ is a metric on $V:=\oplus_iH^0(L^{k+i})^*$ then  
$$FS(|\cdot|_V) = (\phi_k^*h_{FS},\phi_k^* \omega_{FS}),$$
where $h_{FS}$ and $\omega_{FS}$ are Fubini-Study metrics associated to $|\cdot|_V$.
\item If  $h$ is a hermitian metric on $L$ and $\omega$ a K\"ahler metric in  $\mathcal K(c_1(L))$ the metric $\Hilb(h,\omega)$ on $V$ is defined by requiring that for $s\in H^0(L^{k+i})$
\begin{equation}
|s|^2_{\Hilb(h,\omega)} = \frac{1}{c_i\vol} \int_X |s|^2_h\frac{\omega^n}{n!}\,.\label{eq:definitionofHilb}
\end{equation}
Notice this differs from the usual $L^2$-metric by the $c_i\vol$ factors.  Obviously
$V$ and the maps $\Hilb$ and $\FS$ depend on $k$, and this will always be clear from
the context. 
\end{itemize}


\begin{defn}\label{defn:balanced} We say that the pair  $(h,\omega)$ is \emph{balanced}
at level $k$ if it is a fixed point of   $\FS\circ \Hilb$.  A metric $|\cdot|_V$ is said to be \emph{balanced} if it is a fixed point of  $\Hilb\circ \FS$.
\end{defn}

\begin{prop}
A metric $|\cdot|_V$ on $V$ is balanced if and only if $\phi_k\colon X\subset \PP(V)$ is a balanced
orbifold. 
\end{prop}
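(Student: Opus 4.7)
The plan is to unpack both sides of the biconditional in terms of a single graded $|\cdot|_V$-orthonormal basis $\{t^i_\alpha\}$ and show that they coincide term by term, essentially by reading off Lemma \ref{lem:Mincoordinates} and the definition of $\Hilb$.

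First I would choose a graded orthonormal basis $\{t^i_\alpha\}$ of $V^*$ with respect to $|\cdot|_V$ (so each $t^i_\alpha$ gives a section of $L^{k+i}$ via the identification $V^{k+i}=H^0(L^{k+i})^*$). Applying $\FS$ to $|\cdot|_V$ produces the pair $(h_{FS},\omega_{FS})$ on $(L,X)$ obtained by pulling back along $\phi_k$. Then $\Hilb(h_{FS},\omega_{FS})$ is a new metric on $V$, whose inner product in the same basis is given, by \eqref{eq:definitionofHilb}, by
\[
(t^i_\alpha,t^i_\beta)_{\Hilb\circ\FS(|\cdot|_V)} = \frac{1}{c_i\vol}\int_X (t^i_\alpha,t^i_\beta)_{h_{FS}}\,\frac{\omega_{FS}^n}{n!}.
\]
(Sections of different weights are orthogonal by construction, since $\Hilb$ preserves the grading.)

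Next I would observe that $|\cdot|_V=\Hilb\circ\FS(|\cdot|_V)$ is equivalent, given that $\{t^i_\alpha\}$ is $|\cdot|_V$-orthonormal, to the matrix identity
\[
\int_X (t^i_\alpha,t^i_\beta)_{h_{FS}}\,\frac{\omega_{FS}^n}{n!} \;=\; c_i\vol\,\delta_{\alpha\beta}\qquad \text{for all } i,\alpha,\beta.
\]
But by Lemma \ref{lem:Mincoordinates} this is precisely the statement that every block $M^i(X)$ of $M(X)$ vanishes, i.e.\ $M(X)=0$, which by Definition \ref{def:balanced} is the condition for $\phi_k\colon X\subset\PP(V)$ to be a balanced orbifold. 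The two implications go through simultaneously because both sides of the equivalence were reformulated as the same linear equation in the chosen basis.

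There is essentially no obstacle here beyond care with bookkeeping: one must check that $\Hilb\circ\FS$ preserves the grading of $V$ (so the weighted pieces $V^{k+i}$ remain orthogonal, and it suffices to compare block by block), and that the normalisation constants $c_i\vol$ entering the definition of $\Hilb$ are exactly those appearing in the moment map formula \eqref{eq:momementmapweightedprojcspace}. Both are immediate from the way $\omega_{FS}$ and $h_{FS}$ were set up relative to $\mu_{U(1)}^{-1}(0)$, so the proof reduces to comparing the two formulas and invoking Lemma \ref{lem:Mincoordinates}.
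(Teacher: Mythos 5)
Your proposal is correct and is essentially identical to the paper's own proof: both pick a graded $|\cdot|_V$-orthonormal basis, identify the fixed-point condition for $\Hilb\circ\FS$ with the orthonormality of that basis in the $\Hilb(h_{FS},\omega_{FS})$-metric, and then invoke Lemma \ref{lem:Mincoordinates} to recognise this as the vanishing of $M(X)$. No substantive difference in route or content.
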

\begin{proof}
  Given a metric $|\cdot|_V$ let $(h_{FS},\omega_{FS}) = FS(|\cdot|_V)$ and $\{t_{\alpha}^{i}\}$ be a graded $|\cdot|_V$-orthonormal basis for $V$.  Then by Lemma \ref{lem:Mincoordinates}, $M(X)=0$ if and only if 
\begin{equation*}
\frac{1}{c_i\vol}\int_X (t_{\alpha}^{i}, t^{i}_{\beta})_{h_{FS}} \, \frac{\omega_{FS}^n}{n!} = \delta_{\alpha\beta} \quad\text { for all  } i,\alpha,\beta,
\end{equation*}
if and only if $\{t_{\alpha}^{i}\}$ is orthonormal with respect to the $\Hilb(h_{FS},\omega_{FS})$
metric, if and only if it is the same metric as $|\cdot|_V$.
\end{proof}

Another way to express the balanced condition is through Bergman kernels.
\begin{defn}\label{def:weightedbergman}
Let $h$ be a hermitian metric on $L$  and $\omega$ be a K\"ahler metric on $X$.  The \emph{weighted Bergman kernel} is the function
  \begin{equation*}
    \label{eq:defweightedbergman}
    B_k=B_k(h,\omega):= \vol \sum_i c_i (k+i) \sum_{\alpha} |s_{\alpha}^i|_{h}^2
  \end{equation*}
  where $\{s_{\alpha}^i\}$ is graded basis of $\oplus_i H^0(L^{k+i})$ that is orthonormal with respect to the $L^2$-metric defined by $(h,\omega)$.  Equivalently
  \begin{equation*}
    \label{eq:defweightedbergman2}
    B_k=  \sum_i (k+i) \sum_{\alpha} |t_{\alpha}^i|_{h}^2
  \end{equation*}
where $\{t_{\alpha}^i\}$ is orthonormal with respect to the $\Hilb(h,\omega)$ metric.
Of course $B_k$ is independent of these choices of basis.
\end{defn}

If $B_k$ is constant over $X$, then we see by integrating over $X$ that this constant
is necessarily $c= \sum_i c_i (k+i) h^0(L^{k+i})$.
In the unweighted case, $B_k$ can be written invariantly in terms of the ratio of the
hermitian metrics $h$ and $h_{FS}$ on $L$. We have the following analogue here.

\begin{prop}\label{prop:balancedimpliesBergmanconstant}
Fix a hermitian metric $h$ on $L$ and a K\"ahler metric $\omega\in \mathcal K(c_1(L))$ and  let $(h_{FS},\omega_{FS}) = FS\circ \Hilb(h,\omega)$.  Then  $h=h_{FS}$ if and only if $B_k(h,\omega)\equiv c$ is constant on $X$.
\end{prop}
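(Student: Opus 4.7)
The plan is to extract a single pointwise identity from the definition of $h_{FS}$ and then deduce both directions of the proposition from a strict monotonicity argument.

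Let $\{t^i_\alpha\}$ be a graded $\Hilb(h,\omega)$-orthonormal basis of $V^*=\oplus_i H^0(L^{k+i})$, so that by Definition~\ref{def:weightedbergman}
\[
B_k(h,\omega)(x) \;=\; \sum_i (k+i)\sum_\alpha |t^i_\alpha(x)|^2_h.
\]
The first key step is the pointwise identity
\[
\sum_i (k+i)\sum_\alpha |t^i_\alpha(x)|^2_{h_{FS}} \;=\; c \qquad\text{for every } x\in X,
\]
which is immediate from the construction of $h_{FS}$: for any non-zero lift $\tilde x\in L^{-1}_x$, the image $v\in V$ of $\tilde x$ under the weighted Kodaira embedding satisfies $|v_{k+i}|^2=\sum_\alpha|t^i_\alpha(\tilde x)|^2$ because $\{t^i_\alpha\}$ is dual to a $|\cdot|_V$-orthonormal basis, and the defining equation \eqref{eq:defoflambda} for $\lambda(\tilde x)$ reads $\sum_i(k+i)\lambda(\tilde x)^{2(k+i)}\sum_\alpha|t^i_\alpha(\tilde x)|^2=c$. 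On the other hand, the calculation in the proof of Lemma~\ref{lem:Mincoordinates} gives $|t^i_\alpha(x)|^2_{h_{FS}}=\lambda(\tilde x)^{2(k+i)}|t^i_\alpha(\tilde x)|^2$, and combining the two yields the identity.

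Next, write $h=h_{FS}e^{-\phi}$ for a smooth real-valued $\phi$ on $X$. For any $s\in H^0(L^{k+i})$ we then have $|s|^2_h=e^{-(k+i)\phi}|s|^2_{h_{FS}}$, so with $a_i(x):=(k+i)\sum_\alpha|t^i_\alpha(x)|^2_{h_{FS}}\ge 0$ the identity above becomes $\sum_i a_i(x)=c$ while
\[
B_k(h,\omega)(x) \;=\; \sum_i a_i(x)\,e^{-(k+i)\phi(x)}.
\]
The forward implication is immediate: if $h=h_{FS}$ then $\phi\equiv 0$ and $B_k\equiv c$.

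For the converse, suppose $B_k\equiv c$. Then at every $x$,
\[
\sum_i a_i(x)\bigl(e^{-(k+i)\phi(x)}-1\bigr)=0.
\]
Since all weights $k+i$ are strictly positive, each factor $e^{-(k+i)\phi(x)}-1$ has the sign of $-\phi(x)$, and the $a_i(x)$ are non-negative and not all zero (otherwise $\sum_i a_i(x)=0\neq c$, using the Kodaira embedding \eqref{eq:orbiembedding}, under which at each $x$ at least one $t^i_\alpha$ is non-vanishing). Strict monotonicity of $u\mapsto e^{-(k+i)u}$ then forces $\phi(x)=0$ pointwise, so $h=h_{FS}$. The only real content is the key identity; once that is in hand the rest is a one-line monotonicity argument, so I do not anticipate any serious obstacle.
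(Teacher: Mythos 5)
Your proof is correct and follows essentially the same route as the paper: the key pointwise identity $\sum_i(k+i)\sum_\alpha|t^i_\alpha|^2_{h_{FS}}=c$ is exactly the computation \eqref{eq:balancedbergmancalc}, and your monotonicity argument in $\phi$ is the paper's uniqueness-of-the-positive-solution argument with $\beta=h_{FS}/h$ written as $e^{\phi}$. Nothing is missing.
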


\begin{proof}
Let $\{t_{\alpha}^{i}\}$ be a graded basis for $\oplus_i H^0(L^{k+i})$ that is orthonormal with respect to the $\Hilb(h,\omega)$-metric.  For $x\in X$ let $\tilde{x}$ be any non-zero lift in $L^{-1}|_x$.  Then
\begin{eqnarray}\label{eq:balancedbergmancalc}
\sum_{i} (k+i) \sum_{\alpha}  |t_\alpha^{i}(x)|^2_{h_{FS}}  &=& \sum_{i} (k+i) \sum_{\alpha}  |(\tilde{x},t_\alpha^{i}(x))|^2 |\tilde{x}|^{-2(k+i)}_{h_{FS}} \nonumber \\
   &=& \sum_{i}  (k+i) \sum_{\alpha} \lambda(\tilde{x})^{2(k+i)}  |(\ev_{\tilde{x}}^{i},t_\alpha^{i})|^2 \nonumber \\
    &=&  \sum_{i} (k+i) \lambda(\tilde{x})^{2{i}} |\ev_{\tilde{x}}^{i}|^2  
    = c,
  \end{eqnarray}
from the definition of dual norms, the fact that $\lambda(\tilde{x}) = |\tilde{x}|_{h_{FS}}^{-1}$ and the defining equation for $\lambda(\tilde{x})$ \eqref{eq:defoflambda}.    Thus if $h=h_{FS}$ then $B_k$ is constant.  Conversely, if $\beta:= h_{FS}/h$ we have
  \begin{equation}
    c= \sum_{i} (k+i) \sum_{\alpha} |t_\alpha^{i}(x)|^2_{h_{FS}} = \sum_{i}^m (k+i) \sum_{\alpha} \beta(x)^{(k+i)} |t_\alpha^{i}(x)|^2_{h}.\label{eq:balanceduniquesolution2}
  \end{equation}
Now note that for fixed $x$, the quantity $u_i=(k+i)\sum_{\alpha}|t_\alpha^{i}(x)|^2_{h}$
is nonnegative for each $i$, so there is a unique positive real solution to the equation $\sum_{i=1} \beta^{2(k+i)}(x) u_i=c$. If $B_k\equiv c$ is constant then $\beta(x)=1$ is one  solution, and thus the unique solution, so $h= h_{FS}$.
\end{proof}




\chapter{Limits of Fubini-Study metrics}\label{chap:limitsFS}

The connection between constant scalar curvature metrics and stability comes through
the asymptotics of Fubini-Study metrics.  The crucial ingredient is the asymptotics,
as $k\to\infty$, of the weighted Bergman kernel of Definition \ref{def:weightedbergman}:
\[ B_k = \vol\,\sum_i c_i (k+i)  \sum_{\alpha} |s^i_{\alpha}|^2_h.\]
Here $\{s_{\alpha}^i\}$ is a basis of $H^0(L^{k+i})$ that is orthonormal with respect to the $L^2$-metric induced by $h$ and $\omega$.  Ensuring that this is related to scalar curvature requires a particular choice of $c_i$, so for concreteness assume from now on they are chosen by requiring 
\begin{equation} \label{eq:choiceofci}
\sum_i c_i t^i  := (t^{\ord(X)-1} + t^{\ord(X)-2} + \cdots +1)^{p+1}
\end{equation}
for some sufficiently large integer $p$.  We prove in \cite[1.7 and 4.13]{ross_thomas:weigh_bergm_kernel_orbif} that with this choice of $c_i$ there is an asymptotic expansion
\begin{equation}
B_k = b_0 k^{n+1}+b_1 k^{n} + \cdots \quad\mathrm{as}\ k\to\infty \label{eq:expansion}
\end{equation}
for some smooth functions $b_i$.  Taking larger values of $p$ yields a stronger expansion: in fact if $p\ge r+q$ for integers $r,q\ge 0$ then \eqref{eq:expansion} holds up to terms of order $O(k^{n+1-r})$ in the $C^{q}$-norm. By this we mean that
there is a constant $C$ such that for all $k$,
\[ \left|\!\left| B_k - b_0 k^{n+1} - b_1 k^{n} -\cdots - b_{r-1} k^{n+1-(r-1)} \right|\!\right|
\le Ck^{n+1-r},\]
where the norm is the $C^{q}$-norm taken over $X$ in the orbifold sense, with the pointwise
norm of the derivatives measured with respect to the metric defined by
$\omega$.  Moreover the constant $C$ can be taken to be uniform for $(h,\omega)$ in a
compact set.

To achieve what we need in this paper it is sufficient to select $p=5$, so in particular there is a $C^2$-expansion involving the top two terms $b_0$ and $b_1$; however nothing is lost if the reader prefers to take a larger $p$ for simplicity.  Moreover if $2\pi\omega_h$ denotes the curvature $i\partial\overline\partial\log h$ of $h$, the top two coefficients are given by \cite[1.11]{ross_thomas:weigh_bergm_kernel_orbif}
\begin{eqnarray*}
  b_0 &=& \vol\,\frac{\omega_h^n}{\omega^n}\sum_i c_i,  \\
b_ 1 &=& \vol\,\frac{\omega_h^n}{\omega^n} \sum_i c_i\left((n+1)i + \tr_{\omega_h}(\Ric(\omega))-\frac{1}{2}\Scal(\omega_h)\right).
\end{eqnarray*}
In particular if $2\pi \omega$ is in fact the curvature of $h$ this simplifies to
\begin{equation} \label{b0b1}
b_0 = \vol\, \sum_i c_i, \qquad b_ 1 = \vol\,\sum_i c_i\left((n+1)i +\frac{1}{2}\Scal(\omega)\right).
\end{equation}
Observe that in this case the top order term, $b_0$, is constant over $X$.\medskip

Now integrating the expansion over $X$ shows the quantity
$c=\sum_i c_i (k+i) h^0(L^{k+i})$ is polynomial modulo small terms (this is shown directly in Lemma \ref{lem:ignoringperiodic}).  In fact
\begin{equation}
  c= \vol\sum_i c_i\left[ k^{n+1} + \left((n+1)i + \frac{\,\overline{\!S}}{2}\right) k^{n}\right] + O(k^{n-1}),\label{eq:expansionofc}
\end{equation}
where $\overline{\! S}$ denotes the average of the scalar curvature of any K\"ahler metric
in $\mathcal K(c_1(L))$.

Similarly \cite[Remark 4.13]{ross_thomas:weigh_bergm_kernel_orbif} there is also an asymptotic
expansion
\begin{equation} \label{another}
\vol\,\sum_i c_i  \sum_{\alpha} |s^i_{\alpha}|^2_h\ =\ b_0 k^n+b_1' k^{n-1} + \cdots
\end{equation}
for some function $b_1'$, and where $b_0$ is as above. Here the choice of $c_i$ is as above \eqref{eq:choiceofci}, and if $p\ge r+q$ the expansion is in the $C^{q}$-norm up to terms of order $O(k^{n-r})$.\medskip


In what follows fix a hermitian metric $h$ on $L$ and K\"ahler metric $\omega\in \mathcal K(c_1(L))$, and let $(h_{FS,k},\omega_{FS,k})$ be the pair $FS\circ \Hilb(h,\omega)$ coming from the embedding $X\subset \PP(\oplus_i H^0(L^{k+i})^*)$.
For embeddings of manifolds in ordinary projective space, the asymptotics of $h/h_{FS,k}$
are those of the Bergman kernel. For orbifolds, the fact that the Fubini-Study
fibre metric is defined implicitly in Definition
\ref{def:FSfibre} means that we have to work harder.

\begin{thm}\label{thm:expansionFS}
Suppose that $2\pi \omega$ is the curvature of $h$.  Then $(h_{FS,k},\omega_{FS,k})$ converges to $(h,\omega)$ as $k$ tends to infinity.  In fact if  $\,\overline{\!S}$ denotes the average of the scalar curvature then
  \begin{equation}
 \frac{h_{FS,k}}{h}= 1 + \frac{\,\overline{\!S}-\Scal(\omega)}{2} k^{-2} +
 O({k^{-3}})\label{eq:expansionFSfibre}
 \end{equation}
in the $C^2$-norm, and
\begin{equation}
 \omega = \omega_{FS,k} + O(k^{-2})\label{eq:limitFSmetric}
\end{equation}
in $C^0$.    In particular the set of Fubini-Study K\"ahler metrics is dense in $\mathcal K(c_1(L))$.
\end{thm}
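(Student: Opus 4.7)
The strategy is to study the implicit equation that characterises $h_{FS,k}$ pointwise and to feed into it the weighted Bergman kernel expansions from \cite{ross_thomas:weigh_bergm_kernel_orbif}. Take a graded $\Hilb(h,\omega)$-orthonormal basis $\{t_\alpha^i\}$ for $V$ and set $\beta := h_{FS,k}/h$. Exactly as in the proof of Proposition \ref{prop:balancedimpliesBergmanconstant}, the defining relation \eqref{eq:defoflambda} for the Fubini-Study fibre metric yields the pointwise identity
\[
\sum_i (k+i)\,\beta(x)^{k+i}\sum_\alpha |t_\alpha^i(x)|_h^2 \;=\; c,
\]
and the plan is to solve this perturbatively in $\gamma := \beta - 1$.

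The zeroth-order term in $\gamma$ on the left is precisely the weighted Bergman kernel $B_k(h,\omega)$. Since $2\pi\omega$ is the curvature of $h$, formula \eqref{b0b1} applies, and subtracting the expansion \eqref{eq:expansionofc} of $c$ from the Bergman expansion \eqref{eq:expansion} the top coefficients cancel to give
\[
c - B_k \;=\; b_0\,\frac{\,\overline{\!S}-\Scal(\omega)}{2}\,k^n + O(k^{n-1}),\qquad b_0 = \vol\sum_i c_i.
\]
The first-order term in $\gamma$ is $\gamma\sum_i (k+i)^2\sum_\alpha|t_\alpha^i|_h^2$; the split $(k+i)^2 = k(k+i) + i(k+i)$ shows this equals $kB_k + O(k^{n+1}) = b_0 k^{n+2} + O(k^{n+1})$. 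Terms quadratic and higher in $\gamma$ contribute at most $O(k^2\gamma^2)\cdot B_k$, which is $O(k^{n-1})$ once the self-consistent ansatz $\gamma = O(k^{-2})$ is in force. Matching orders in $k$ then yields the $C^0$-form of \eqref{eq:expansionFSfibre}. To upgrade to the $C^2$-statement one differentiates the implicit equation twice and applies the $C^2$-strength Bergman expansion, available because the choice $p=5$ in \eqref{eq:choiceofci} is made precisely for this purpose.

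For the K\"ahler form we combine Lemma \ref{lem:fubinistudyreduction}
\[
\omega_{FS,k} - \omega \;=\; (\omega_{h_{FS,k}} - \omega) + \frac{i}{2c}\,\partial\overline\partial f,\qquad f = \sum_{i,\alpha}|t_\alpha^i|_{h_{FS,k}}^2,
\]
with two observations. First, $\omega_{h_{FS,k}} - \omega = -\tfrac{i}{2\pi}\partial\overline\partial\log\beta$, which is $O(k^{-2})$ in $C^0$ by the $C^2$-version of \eqref{eq:expansionFSfibre}. Second, substituting $|t|^2_{h_{FS,k}} = \beta^{k+i}|t|_h^2 = (1+O(k^{-1}))|t|_h^2$ into $f$ and applying the auxiliary expansion \eqref{another} gives $f = b_0 k^n + O(k^{n-1})$ with a \emph{constant} leading coefficient, so $\partial\overline\partial f = O(k^{n-1})$ in $C^0$ and dividing by $2c\sim 2b_0 k^{n+1}$ yields $\frac{i}{2c}\partial\overline\partial f = O(k^{-2})$. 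This gives \eqref{eq:limitFSmetric}, and density of Fubini-Study K\"ahler metrics in $\mathcal K(c_1(L))$ is then immediate.

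The main technical obstacle is the $C^2$-upgrade for $\beta$: each differentiation of the implicit equation multiplies asymptotic terms by further factors of $k+i$, so to control $\gamma$ and its first two derivatives at order $O(k^{-3})$ one needs the Bergman expansion with a sufficiently good $C^2$-remainder — which is exactly what the choice $p=5$ in \eqref{eq:choiceofci} provides. Once the fibre-metric expansion is in place, the $C^0$ bound on $\omega_{FS,k}-\omega$ is essentially cosmetic thanks to the constancy of the leading term $b_0 k^n$ in \eqref{another}, which kills what would otherwise have been an $O(k^{-1})$ contribution from $\partial\overline\partial f/c$.
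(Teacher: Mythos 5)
Your skeleton is the paper's: the pointwise implicit equation $\sum_i(k+i)\alpha_k^{k+i}\mathcal B_{k+i}=c$ for $\alpha_k=h_{FS,k}/h$, comparison of it with the weighted Bergman expansion \eqref{eq:expansion}, \eqref{b0b1} and with \eqref{eq:expansionofc}, and then Lemma \ref{lem:fubinistudyreduction} together with \eqref{another} for \eqref{eq:limitFSmetric} (that last part is essentially identical to the paper's argument and is fine, granted the $C^2$ fibre expansion). The gaps are in how you extract the fibre expansion. First, the ``self-consistent ansatz'' $\gamma=\alpha_k-1=O(k^{-2})$ is assumed, not proved, and without an a priori bound the perturbative truncation of $\alpha_k^{k+i}$ is not legitimate: the exponent grows like $k$, so the error of the linearisation is $O(k^2\gamma^2)B_k$ only once $k\gamma$ is already known to be small; if $\gamma\sim k^{-1}$ the quadratic term is of the same order $k^{n+1}$ as the linear term and the matching collapses, and if $\gamma$ is merely bounded the expansion says nothing. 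As written the argument is circular. The paper removes this by an a priori step: positivity of $\alpha_k$ and the \emph{exact} factorisation $\alpha_k^{k+i}-1=(\alpha_k-1)(1+\alpha_k+\cdots+\alpha_k^{k+i-1})$, whose second factor is at least $1$, give $\alpha_k=1+O(k^{-1})$; this yields uniform two-sided bounds on the powers $\alpha_k^j$, which make that factor $\Omega(k)$ and give $\alpha_k-1=O(k^{-2})$; only then is the comparison with $\beta_k=1+\tfrac{\overline{S}-\Scal(\omega)}{2}k^{-2}$ carried out, again via the exact telescoping identity rather than a truncated Taylor series. You need to supply this step (or an equivalent bootstrap) before any order-matching is meaningful.

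Second, the $C^2$ upgrade is not simply ``differentiate twice and invoke the $C^2$-strength Bergman expansion''. Each derivative of $\mathcal B_{k+i}$ costs a factor of $k$, so starting from only the $C^0$ closeness $\alpha_k-\beta_k=O(k^{-3})$, differentiating the difference of the two implicit equations gives at best $D\alpha_k-D\beta_k=O(k^{-2})$, which is too weak to retain the $k^{-2}$ coefficient even in $C^1$, let alone $C^2$. The missing idea is to first sharpen the $C^0$ statement: the paper augments $\beta_k$ by correction terms $\tau_1k^{-3}+\tau_2k^{-4}$, with $\tau_1,\tau_2$ chosen to kill the $k^{n-1}$ and $k^{n-2}$ coefficients, so that $\alpha_k-\beta_k=O(k^{-5})$ in $C^0$; only with this extra room do the once- and twice-differentiated equations close, giving $D\alpha_k-D\beta_k=O(k^{-4})$ and $D^2\alpha_k-D^2\beta_k=O(k^{-3})$. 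Your remark about $p=5$ in \eqref{eq:choiceofci} identifies the right source of $C^2$ control of the Bergman kernel, but that alone does not compensate for the loss of powers of $k$; without the higher-order $C^0$ refinement the proposed differentiation argument fails to deliver \eqref{eq:expansionFSfibre} in $C^2$, and hence also leaves \eqref{eq:limitFSmetric} unproved, since that step uses $\partial\overline\partial\log$ of the fibre expansion.
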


\begin{rmk}
The Theorem can be generalised to the case that $\omega$ is not
the curvature of $h$, in which case there will be an additional $O(k^{-1})$ term appearing in the expansion of $h_{FS,k}/h$.
\end{rmk}

\begin{proof}[Proof of \eqref{eq:expansionFSfibre}]

The aim is to find an asymptotic expansion of
  \begin{eqnarray*}
\alpha_k := \frac{h_{FS,k}}h\,. 
\end{eqnarray*}
Set $\mathcal B_{r}:= \sum_{\alpha} |t^r_\alpha|_h^2$
where $\{t^r_{\alpha}\}$ is a basis of $H^0(L^r)$ that is orthonormal with respect to the $\Hilb(h,\omega)$-norm from \eqref{eq:definitionofHilb}, so that $B_k=\sum_i (k+i)\mathcal B_{k+i}$.    Then if $0\neq \tilde{x}\in L_x^{-1}$,
  \begin{eqnarray}
    \sum_i (k+i) \alpha_k^{k+i} \mathcal B_{{k+i}} &=& \sum_i (k+i)\|\tilde{x}^{k+i}\|^{-2}_{h_{FS,k}} \sum_{\alpha} |t_{\alpha}^{k+i}(\tilde{x})|_h^2 \nonumber \\
&=&\sum_i (k+i) \|\tilde{x}^{k+i}\|_{h_{FS,k}}^{-2}\sum_{\alpha} \|\tilde{x}\|^2_{\Hilb(h,\omega)} \nonumber \\
&=&c, \label{eq:defofalphak}
\end{eqnarray}
where the second equality uses the fact that the $t_{\alpha}^{k+i}$ are orthonormal, the third inequality comes from the definition of the $FS$-norm \eqref{eq:defoflambda}, and as in \eqref{cdef}, $c=\sum_i c_i (k+i)h^0(L^{k+i})$ is constant over $X$.\medskip

We aim first for an asymptotic expansion of $\alpha_k$ that holds in $C^0$.   Say a sequence $a_k$ of real numbers is of order $\Omega(k^p)$ if there is a $\delta>0$ such that $a_k\ge \delta k^p$ for $p\gg 0$.  A sequence of real-valued  functions $f_k$ on $X$ is of order $\Omega(k^p)$ if there is a $\delta>0$ with $f_k\ge \delta k^p$ uniformly on $X$ for all $p\gg 0$.  \medskip

\noindent\textbf{Step 1:} We show  $\alpha_k = 1 + O(k^{-1})$ in $C^0$.  Observe that from \eqref{b0b1} and \eqref{eq:expansionofc},
\[B_k=\sum_i (k+i) \mathcal B_{k+i} = \vol \sum_i c_i k^{n+1} + O(k^n) = c+ O(k^n).\]
Taking the difference with  \eqref{eq:defofalphak} gives
\[\sum_i (k+i)\Big(\alpha_k^{k+i}-1\Big)\mathcal B_{k+i} = O(k^n),\]
and so
\begin{equation}
(\alpha_k-1)\sum_i (k+i)\Big[1+\alpha_k + \alpha_k^2 + \dots + \alpha_k^{k+i-1}\Big]
\mathcal B_{k+i} = O(k^n).\label{eq:weakboundalpha}
\end{equation}

Now $\alpha_k$ is pointwise positive, so the term in square brackets is at least $1$, and $\sum_i (k+i) \mathcal B_{k+i}= \Omega(k^{n+1})$, so the sum on the left hand side is $\Omega(k^{n+1})$.  Thus $\alpha_k-1=O(k^{-1})$ as claimed.  \medskip

\noindent\textbf{Step 2:} There are positive constants $C_1,C_2$ such that
\begin{eqnarray}
  C_1&\le&\alpha_k^j \quad \text{ for all }\frac{k}{2}\le j\le k.\nonumber\\
 \alpha_k^j &\le& C_2 \quad \text{ for all }0\le j\le k. \label{eq:uniformboundalpha}
\end{eqnarray}
\begin{proof}
As $\alpha_k = 1+ O(k^{-1})$ we have $C_1^2\le \alpha_k^k\le C_2$ for some $C_1\in(0,1),\
C_2>1$ and all $k\gg 0$.  Thus for $j\ge \frac{k}{2}$ we have $\alpha_k^j\ge C_1$ and
for  $j\le k$ we have $\alpha_k^j\le C_2$. 
\end{proof}

Using this we can improve on Step 1 by observing that the term in square brackets in \eqref{eq:weakboundalpha} is of order $\Omega(k)$ since each power of $\alpha_k$ is
nonnegative, and there are at least $k/2$ terms  bounded from below by $C_1$.  Hence
\[ \alpha_k -1 = O(k^{-2}) \quad \text{in } C^0.\]

\noindent\textbf{Step 3:}  
Next define
\begin{equation} \label{betak}
\beta_k = 1 + \frac{\overline{\! S}-\Scal(\omega)}{2}k^{-2}.
\end{equation}
We claim that
\begin{eqnarray}
  \sum_i (k+i) \beta_k^{k+i} \mathcal B_{k+i} = c + O(k^{n-1}) \quad\text{ in }C^0.\label{eq:equationforbeta}
\end{eqnarray}
That is, the $\beta_k$ satisfy an implicit equation very close to the one \eqref{eq:defofalphak}
satisfied by the $\alpha_k$, which we shall use to deduce they are approximately
equal.
\begin{proof}
  Note
\[\beta_k^{k+i} = 1 + \frac{\overline{\! S}-\Scal(\omega)}{2}k^{-1} + O(k^{-2}).\]
So using the asymptotic expansion (\ref{eq:expansion}, \ref{b0b1}) of the weighted
Bergman kernel $B_k=\sum(k+i)\mathcal B_{k+i}$,
\begin{align}
\sum_i&  (k+i) \beta_k^{k+i} \mathcal B_{{k+i}}=\sum_i  (k+i)  \left(1 + \frac{\overline{\! S} -\Scal(\omega)}{2k} + O(k^{-2})\right)\mathcal B_{k+i} \nonumber \\
&=\vol\sum_ic_i \left[ k^{n+1} + \left(\frac{\overline{\! S} - \Scal(\omega)}{2} + (n+1)i + \frac{\Scal(\omega)}{2}\right) k^n\right]+ O(k^{n-1})\nonumber \\
&=  \vol \sum_ic_i\left[  k^{n+1} + \left((n+1)i +  \frac{\overline{\! S}}{2}\right) k^n\right] + O(k^{n-1}) \quad \text{in } C^0,\label{eq:calofbetak}
\end{align}
since $\mathcal B_{k+i} = O(k^n)$ in $C^0$. Comparing with \eqref{eq:expansionofc} proves the claim.
\end{proof}\medskip

\noindent\textbf{Step 4:} 
To simplify notation set
\[\gamma_k:=\alpha_k^{k+i-1} + \alpha_k^{k+i-2} \beta_k + \dots + \beta_k^{k+i-1}.\]
Taking the difference between the implicit equations \eqref{eq:defofalphak} and \eqref{eq:equationforbeta}
for $\alpha_k$ and $\beta_k$ yields
\begin{equation}
  (\alpha_k-\beta_k) \sum_i (k+i) \gamma_k \mathcal B_{k+i} = O(k^{n-1}) \quad\text{in } C^0.\label{eq:differenceab}
\end{equation}
From \eqref{eq:uniformboundalpha} and the definition \eqref{betak} of $\beta_k$ we see
that $\gamma_k=\Omega(k)$. Therefore by \eqref{eq:differenceab},
\begin{equation}
\alpha_k=\beta_k + O(k^{-3})= 1 + \frac{\overline{\! S}-\Scal(\omega)}{2}k^{-2} + O(k^{-3}),
\label{eq:differencealphabeta}
\end{equation}
which is the expansion we wanted at the level of $C^0$-norms.\medskip

\noindent\textbf{Step 5:} To extend this to the $C^2$-norm we actually require an expansion in the $C^0$-norm to higher order (this is because although the pieces of the Bergman kernel $\mathcal B_{k+i}$ are of order $O(k^{n})$, their derivatives $D^p\mathcal B_{k+i}$ are of order $O(k^{n+p})$ \cite[Corollary 4.10]{ross_thomas:weigh_bergm_kernel_orbif}, resulting in a loss of a factor of $k$ for each derivative we take).  To achieve this replace $\beta_k$ with
\[ \beta_k = 1 + \frac{\overline{\! S}-\Scal(\omega)}{2}k^{-2} + \tau_1 k^{-3} + 
\tau_2 k^{-4},\]
where the $\tau_i$ are smooth functions independent of $k$. Then the coefficient of $k^{n-1}$
in \eqref{eq:calofbetak} is $b_0\tau_1+f$, where $f$ is independent of $k$ and the $\tau_i$.
Similarly the coefficient of $k^{n-2}$ is
$b_0\tau_2+g$, where $g$ is independent of $k$ and $\tau_2$. 

So setting $\tau_1=-f/b_0$ and $\tau_2=-g/b_0$ we may assume that the $k^{n-1}$ and $k^{n-2}$ terms in \eqref{eq:calofbetak} vanish. Therefore
\begin{equation}
\sum_i (k+i) \beta_k^{k+i} \mathcal B_{k+i}= c+O(k^{n-3+p})  \text{ in } C^p \text{ for } p=0,1,2,
\end{equation}
where we have used  $\mathcal B_{k+i} = O(k^{n+p})$ in $C^p$ in
place of the original argument using $\mathcal B_{k+i}=O(k^n)$ in $C^0$.   Thus
\begin{equation}
  (\alpha_k-\beta_k) \sum_i (k+i) \gamma_k \mathcal B_{k+i}=O(k^{n-3+p})  \text{ in } C^p,\ p=0,1,2.\label{eq:differenceab2}
\end{equation}
In particular, $\alpha_k = \beta_k + O(k^{-5})$ in $C^0$.\medskip

Now to bound $D\alpha_k$, differentiate \eqref{eq:defofalphak} to get
\begin{equation} \label{dalphak}
D{\alpha_k} \sum_i (k+i)^2 \alpha_k^{k+i-1} \mathcal B_{k+i} = - \sum_i (k+i) \alpha_k^{k+i} D\mathcal B_{k+i}.
\end{equation}
Since  powers of $\alpha_k$ are bounded above uniformly \eqref{eq:uniformboundalpha} and $D\mathcal B_{k+i} = O(k^{n+1})$,  the sum on the right hand side is $O(k^{n+2})$.  On the other hand, using the lower bound in \eqref{eq:uniformboundalpha}, the sum on the left hand side is of order $\Omega(k^{n+2})$, and hence $D\alpha_k = O(1)$.  We claim that $D\gamma_k=O(k^2)$.  In fact both $\alpha_k^j$ and $\beta_k^j$ are uniformly bounded from above for all $k$ and all $j\le k+i$.  Thus if $u+v\le k+i$,
\begin{equation} \label{oslo}
D(\alpha_k^u \beta_k^v) = u\alpha_k^{u-1}\beta_k^v D{\alpha_k} + v\alpha_k^u\beta_k^{v-1}D\beta_k
= O(k),
\end{equation}
since $D\alpha_k=O(1)$ and $D\beta_k = O(k^{-2})$.  Thus $D\gamma_k$ is a sum of $O(k)$ terms each of order $O(k)$ and so $D\gamma_k = O(k^2)$ as claimed.  \medskip

So we know $\gamma_k\mathcal B_{k+i} = O(k^{n+1})$ and $D(\gamma_k \mathcal B_{k+i}) = O(k^{n+2})$.  Differentiating the $p=1$ statement of \eqref{eq:differenceab2} and using $\gamma_k = \Omega(k)$ yields
$$
   (D\alpha_k - D\beta_k)\Omega(k^{n+2})=-(\alpha_k-\beta_k) \sum_i (k+i) D(\gamma_k
   \mathcal B_{k+i}) + O(k^{n-2}) = O(k^{n-2})
$$
as $\alpha_k-\beta_k = O(k^{-5})$.   Hence  $D\alpha_k = D\beta_k +  O(k^{-4})$, and thus we have $\alpha_k-\beta_k = O(k^{-4})$ in $C^1$. In particular $D\alpha_k=O(k^{-2})$. \medskip

A similar argument applies to the second derivative.   Differentiating \eqref{dalphak}
yields 
\begin{eqnarray*}
(D^2 \alpha_k) \Omega(k^{n+2})\!\!&=&\!\!-2\sum_i (k+i)^2\alpha_k^{k+i-1} D\alpha_k D\mathcal B_{k+i} - \sum_i (k+i) \alpha_k^{k+i} D^2\mathcal B_{k+i}\\
&&-\sum_i (k+i)^2(k+i-1)\alpha_k^{k+i-2} (D{\alpha_k})^2 \mathcal B_{k+i}\\
\end{eqnarray*}
which is $O(k^{n+3})$. Thus $D^2\alpha_k = O(k)$.  If $u+v\le k+i$ then
\[D^2(\alpha_k^u\beta_k^v)  = u\alpha_k^{u-1}\beta_k^v D^2\alpha_k + v\alpha_k^u\beta_k^{v-1} D^2\beta_k + O(k^{-2})\]
since $D\alpha_k$ and $D\beta_k$ are both $O(k^{-2})$.   Therefore $D^2(\alpha_k^u\beta_k^v)=O(k^2)$ which implies that $D^2\gamma_k = O(k^3)$ and hence $D^2(\gamma_k \mathcal B_{k+i}) = O(k^{n+3})$.   

Now taking the second derivative of the $p=2$ statement in \eqref{eq:differenceab2},
\begin{eqnarray*}
(D^2\alpha_k - D^2 \beta_k) \Omega(k^{n+2})\!\!&=&\!\!-(\alpha_k-\beta_k) \sum_i (k+i)D^2(\gamma_k \mathcal B_{k+i}) \\
&-&\!\!\!\!\!2(D\alpha_k -D\beta_k) \sum_i (k+i) D(\gamma_k \mathcal B_{k+i}) + O(k^{n-1}).
\end{eqnarray*}
Since $\alpha_k-\beta_k = O(k^{-5})$,   $D\alpha_k-D\beta_k = O(k^{-4})$ and $D(\gamma_k \mathcal B_{k+i}) = O(k^{n+2})$, this is $O(k^{n-1})$.    Hence $D^2\alpha_k = D^2\beta_k + O(k^{-3})$ as required.
\end{proof}

\begin{proof}[Proof of \eqref{eq:limitFSmetric}]
From Lemma \ref{lem:fubinistudyreduction} we have $\omega_{FS,k} = \omega_{h_{FS,k}} + \frac{i}{2c}\partial\overline\partial f_k$, where
\[ f_k = \vol\,\sum_i c_i \sum_{\alpha} |s^i_{\alpha}|_{h_{FS,k}}^2\]
and the $\{s^i_{\alpha}\}$ is a graded basis of $\oplus_i H^0(L^{k+i})$ that is orthonormal with respect to the $L^2$-norm defined by $(h,\omega)$. (So $t^i_\alpha:=\sqrt{c_i\vol\,}
s^i_\alpha$ is an orthonormal basis with respect to the $\Hilb(h,\omega)$ metric.)

Applying $\partial\overline\partial\log$ to \eqref{eq:expansionFSfibre} shows that $\omega_{h_{FS,k}}=\omega_h+O(k^{-2})=\omega+O(k^{-2})$
in $C^0$, so $\omega_{FS,k} = \omega + \frac{i}{2c}\partial\overline\partial f_k+O(k^{-2})$.
So since $c$ is of order $\Omega(k^{n+1})$, to prove \eqref{eq:limitFSmetric} it
will be sufficient to show that $f_k$ is constant on $X$ to $O(k^{n-1})$ in $C^2$-norm.  Applying the expansion \eqref{eq:expansionFSfibre},
\begin{eqnarray*}
   f_k(x) &=& \vol\,\sum_i c_i \frac{h_{FS,k}^{k+i}}{h^{k+i}} \sum_{\alpha} |s_{\alpha}(x)|_{h}^2\\
&=&\vol\,\sum_i c_i ( 1 + \frac{\Scal(\omega)-\overline{\!S}}{2k} + O(k^{-2})) \sum_{\alpha} |s_{\alpha}(x)|_{h}^2 \\
&=&b_0k^n+O(k^{n-1}),
\end{eqnarray*}
by \eqref{another}, where $b_0$ is constant.
\end{proof}

\chapter{Limits of balanced metrics}\label{chap:limitsbalanced}

We digress in this section from our proof of Donaldson's Theorem to give another application of the weighted Bergman kernel that illustrates the connection between balanced metrics and metrics of constant scalar curvature.  

\begin{thm}   Let $(h_k,\omega_k)$ be a pair that is balanced for the embedding $X\subset \PP(\oplus_i H^0(L^{k+i})^*)$, and suppose this sequence converges in $C^2$ to a limit $(h,\omega)$.  Then $2\pi \omega$ is the curvature of $h$ and $\Scal(\omega)$ is constant.
\end{thm}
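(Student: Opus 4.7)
The strategy is to combine the characterisation of balanced metrics via the weighted Bergman kernel (Proposition \ref{prop:balancedimpliesBergmanconstant}) with the general Bergman kernel asymptotics recalled at the start of Chapter \ref{chap:limitsFS}, and extract both conclusions by comparing the two leading powers of $k$. First, balancedness gives $h_k=h_{FS,k}$ and therefore $B_k(h_k,\omega_k)\equiv c_k$ is constant on $X$, where
\[
c_k=\sum_i c_i(k+i)h^0(L^{k+i})=\vol\sum_i c_i\,k^{n+1}+\vol\sum_i c_i\Big((n+1)i+\tfrac{\overline{\!S}}{2}\Big)k^n+O(k^{n-1})
\]
by \eqref{eq:expansionofc}.

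Next I would apply the general Bergman kernel expansion (the one \emph{not} assuming that $2\pi\omega$ is the curvature of $h$), obtaining in $C^0$
\[
B_k(h_k,\omega_k)=b_0(h_k,\omega_k)k^{n+1}+b_1(h_k,\omega_k)k^n+O(k^{n-1}),
\]
with $b_0(h,\omega)=\vol\frac{\omega_h^n}{\omega^n}\sum_i c_i$ and the analogous formula for $b_1$ involving $\tr_{\omega_h}\Ric(\omega)-\frac12\Scal(\omega_h)$. The error is uniform on $C^2$-compact sets, and $(h_k,\omega_k)$ sits in such a set by hypothesis, so the coefficients $b_j(h_k,\omega_k)$ converge to $b_j(h,\omega)$ in $C^0$. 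Equating the $k^{n+1}$-coefficients of $B_k(h_k,\omega_k)$ and $c_k$ forces
\[
\omega_h^n=\omega^n \quad\text{pointwise on }X.
\]
Since the right hand side is a positive volume form, $\omega_h$ is non-degenerate everywhere; its signature is constant by continuity, and $\int_X\omega_h^n=\int_X\omega^n>0$ then gives that $\omega_h$ is K\"ahler.

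Now $\omega$ and $\omega_h$ are both K\"ahler forms representing $c_1(L)$, so the $\partial\overline\partial$-lemma (applied in orbifold charts) writes $\omega=\omega_h+i\partial\overline\partial\phi$ for a smooth function $\phi$; the identity $\omega^n=\omega_h^n$ becomes a homogeneous Monge--Amp\`ere equation. Calabi's uniqueness theorem, which carries over to compact K\"ahler orbifolds by the usual maximum principle applied on each chart, forces $\phi$ to be constant, so $\omega=\omega_h$ and $2\pi\omega$ is the curvature of $h$. With this in hand the simpler expansion \eqref{b0b1} of $b_1$ applies, and comparing $k^n$-coefficients of $B_k(h_k,\omega_k)$ and $c_k$ gives $\Scal(\omega)=\overline{\!S}$, a constant.

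The step I expect to be most delicate is the Monge--Amp\`ere uniqueness on an orbifold; the rest is essentially book-keeping with the Bergman asymptotics already established in the companion paper. Since an orbifold K\"ahler metric is a genuine K\"ahler metric on each chart, Calabi's maximum-principle argument applied to $\phi$ transfers without difficulty, and one only needs invariance of $\phi$ under the chart group to conclude globally.
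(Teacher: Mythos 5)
Your route to the first conclusion is genuinely different from the paper's (which uses the balanced condition through Lemma \ref{lem:fubinistudyreduction}, writing $\omega_k=\omega_{h_k}+\frac{i}{2c}\partial\overline\partial f_k$ and expanding $f_k$ via \eqref{another} to bootstrap to $\omega_k-\omega_{h_k}=O(k^{-2})$, whence $\omega=\omega_h$ with a rate), but as written it has a genuine gap. Knowing only that $\omega_h$ is non-degenerate with $\omega_h^n=\omega^n>0$ forces an even number of negative eigenvalues at each point, not positivity: for $n\ge 3$ a closed form of constant signature $(n-2,2)$, say, is not excluded by $\int_X\omega_h^n=\int_X\omega^n>0$ (that equality is automatic from the pointwise one and carries no information), nor by $\int_X\omega_h\wedge\omega^{n-1}>0$. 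Without positivity of $\omega_h$ the reduction to a homogeneous Monge--Amp\`ere equation plus Calabi uniqueness is unavailable, so the step ``$\omega_h^n=\omega^n\Rightarrow\omega_h=\omega$'' is unjustified (your argument does work for $n=2$, where constant determinant $1$ forces definiteness and negative definiteness contradicts $\int_X\omega_h\wedge\omega>0$).

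The second difficulty is more structural: the ``compare coefficients of $k^n$'' step for constancy of the scalar curvature is not legitimate because the coefficients themselves depend on $k$. From $B_k(h_k,\omega_k)\equiv c_k$ and boundedness of the lower-order data you can only deduce $b_0(h_k,\omega_k)=\vol\sum_i c_i+O(k^{-1})$; to isolate $b_1$ you need $b_0(h_k,\omega_k)-\vol\sum_i c_i=o(k^{-1})$, i.e.\ a quantitative rate for $\omega_{h_k}^n/\omega_k^n\to 1$, since this deviation multiplied by $k^{n+1}$ enters at exactly the order $k^n$ you want to read off. Mere $C^2$-convergence of $(h_k,\omega_k)$ gives no such rate, and your argument never produces one; the paper's proof obtains $\omega_{h_k}^n/\omega_k^n=1+O(k^{-2})$ precisely from Lemma \ref{lem:fubinistudyreduction} together with \eqref{another} before invoking the two-term expansion \eqref{eq:expansion} and reading off $\Scal(\omega_k)-\,\overline{\!S}=O(k^{-1})$. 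If you reinstate that quantitative step, both problems disappear at once: it gives $\omega=\omega_h$ directly (with positivity for free), making the Monge--Amp\`ere detour unnecessary, and it licenses the $k^n$-coefficient comparison, after which your final step agrees with the paper's.
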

\begin{proof}
Letting $2\pi\omega_{h_k}$ denote the curvature of $h_k$, by
Lemma \ref{lem:fubinistudyreduction} we have
\begin{equation} \label{eq:reductionmetric3}
\omega_k=\omega_{h_k}+\frac i{2c}\partial\overline\partial f_k,
\end{equation}
where 
\[ f_k(x)=\vol\,\sum_i c_i \sum_{\alpha} |s^i_{\alpha}(x)|^2_{h_{k}}\]
and $\{s^i_{\alpha}\}$ is a graded orthonormal basis of $\oplus_i H^0(L^{k+i})$ with
respect to the $L^2$-metric defined by $(h_k,\omega_k)$. (Here we are using the balanced
condition: that
$(h_k,\omega_k)$ is the Fubini-Study metric induced from this $L^2$-metric.)

By \eqref{another} we have the $C^4$-estimate
\begin{equation}
 f_k =\vol\frac{\omega_{h_k}^n}{\omega_k^n} \sum_i c_i k^n  + O(k^{n-1}) \label{eq:expansionoff}
\end{equation}
(The estimate is in $C^4$ rather than $C^2$ since we only require it to top order.  Moreover we have used here that the sequence $(h_k,\omega_k)$ converges so lies in a compact set, and thus the $O(k^{n-1})$ can be taken uniformly.)     Since $c=\sum_i c_i(k+i) h^0(L^{k+i})$ is of order $\Omega(k^{n+1})$, we deduce from \eqref{eq:reductionmetric3} that $\omega_k = \omega_{h_k} + O(k^{-1})$ in $C^2$. 

In turn this implies that $\omega_{h_k}^n/\omega_k^n = 1 + O(k^{-1})$, which we can feed back into \eqref{eq:expansionoff} to give $\partial
\overline\partial f_k = O(k^{n-1})$.    Hence in fact $$\omega_k = \omega_{h_k} + O(k^{-2}).$$
In particular, taking the limit as $k\to\infty$ implies that $\omega=\omega_h$, i.e. that $2\pi\omega$ is the curvature of $h$. \medskip

Therefore $\omega_{h_k}^n/\omega_k^n = 1+ O(k^{-2})$ and
$$
\tr_{\omega_{h_k}}(\Ric(\omega_k))= \tr_{\omega_k}(\Ric(\omega_k)) + O(k^{-2}) = \Scal(\omega_k) + O(k^{-2}).
$$
Thus the asymptotic expansion \eqref{eq:expansion} for the weighted Bergman kernel becomes
\begin{equation} \label{Bkexp}
B_k=\vol \sum_i (k+i)c_i \sum_{\alpha} |s^i_{\alpha}|^2 =\vol \sum_i c_i k^{n+1} + b_1 k^n + O(k^{n-1}),
\end{equation}
where  $b_1 = \vol \sum_i c_i \left((n+1)i + \frac{1}{2} \Scal(\omega_k)\right)$. 
But by Proposition \ref{prop:balancedimpliesBergmanconstant} the balanced condition implies
that this weighted Bergman kernel is the constant
\[c=\vol \sum_i c_i k^{n+1} + O(k^n).\]
So the coefficient of $k^{n+1}$ agrees with that of \eqref{Bkexp}.
Taking coefficients of $k^n$ gives, after some rearranging, a constant
$\,\overline{\!S}$ independent of $k$  such that
\[ \Scal(\omega_k) - \,\overline{\!S} = O(k^{-1}).\]   Taking $k$ to infinity yields $\Scal(\omega)= \,\overline{\!S}$ as required.
\end{proof}

\begin{rmk}
The previous theorem was first observed by Donaldson \cite{donaldson(01):scalar_curvat_projec_embed} in the case of manifolds embedded in projective space.  In the same paper Donaldson also proves a much harder converse: a cscK metric implies the existence of balanced metrics for large $k$.  We expect that this converse can also be generalised to orbifolds embeddings in weighted projective space, but have not attempted to prove it.
\end{rmk}

\chapter{K-stability as an obstruction to orbifold cscK metrics}\label{chap:KstabDon}

We now have the tools required to prove the orbifold version of Donaldson's Theorem, and start with the precise definition of stability.

\section{Definition of orbifold K-stability}

Fix a compact $n$-dimensional polarised orbifold with cyclic quotient
singularities $(X,L)$.

\begin{defn}
  A test configuration for $(X,L)$ consists of a pair $(\pi\colon \mathcal
  X\to \mathbb C,\mathcal L)$ where $\mathcal X$ is an orbischeme, $\pi$ is flat and $\mathcal L$
  is an ample orbi-line bundle along with a $\C^*$-action such
  that (1) the action is linear and covers the usual action on $\mathbb C$
  and (2) the  general fibre $\pi^{-1}(t)$ of the test configuration is $(X,L)$.
\end{defn}

Test configurations arise from the action of a one parameter $\C^*$-subgroup of the automorphisms
of weighted projective space $\PP$ on an orbifold embedded in $\PP$.
In general the limit $\mathcal X_0=\pi^{-1}(0)$ will not itself be an orbifold, as it may have scheme structure or entire components consisting of points with nontrivial stabilisers.
In general one should allow $\mathcal X$ to be a Deligne-Mumford stack, but for most of the applications in this paper $\mathcal X$ will itself be an orbifold.

Conversely, we can realise an abstract test configuration via a $\C^*$-action on weighted
projective space, just as in the manifold case
\cite[Proposition 3.7]{ross_thomas:07:study_hilber_mumfor_criter_for}.
Using the orbi-ampleness of $\mathcal L$, we can embed $\mathcal X$ into the
weighted projective bundle $\PP(\oplus_i(\pi_*\mathcal L^{k+i})^*)$ over the base curve
$\C$ for $k\gg0$, such that the pullback of $\O_\PP(1)$ is $\mathcal L$. Pick a trivialisation
of the bundle, making it isomorphic to $\PP(V)\times\C$, where $V=\oplus_iH^0(\mathcal
X_0,\mathcal L^{k+i}|_{\mathcal X_0})^*$. Thus the $\C^*$-action on $V$ arising from
the one on the central fibre $(\mathcal X_0,\mathcal L_0)$ induces a diagonal $\C^*$-action
on $\PP(V)\times\C\supset\mathcal X$ giving the original test configuration.
\medskip

By Proposition \ref{prop:leadingtermweight} we can write the total weight of the $\C^*$-action
on $H^0(L^k)$ as
\begin{equation} \label{w}
w(H^0(L^{k})) = w(k) + \tilde o(k^n),
\end{equation}
where $w(k)$ is a polynomial $b_0k^{n+1}+b_1k^n$ of degree $n+1$. Similarly
\begin{equation} \label{h}
h^0(L^k)=h(k)+\tilde o(k^{n-1}),
\end{equation}
where $h(k)=a_0k^n+a_1k^{n-1}$.

\begin{defn}\label{def:futaki}
  The \emph{Futaki invariant} of the test configuration $(\mathcal X,\mathcal L)$ is the $F_1=\frac{a_0b_1-a_1b_0}{a_0^2}$ term in the expansion
  \[\frac{w(k)}{kh(k)} = F_0 + \frac{F_1}{k} + O\left(\frac{1}{k^2}\right) .\]
  We say $(X,L)$ is \emph{K-semistable} if $F_1\ge 0$ for any
  test configuration with general fibre $(X,L)$.  We say it is \emph{K-polystable} if in addition $F_1=0$ only if the test configuration is a product $\mathcal X=X\times \mathbb C$, i.e. it arises from a $\C^*$-action on $X$.
\end{defn}

In other words we are simply ignoring the non-polynomial terms in the Hilbert and weight functions, and then defining stability exactly as for manifolds. 

One reason this is a sensible stability notion related to scalar curvature is given by
our next result. This shows that taking a weighted sum with our choice of $c_i$ kills
the periodic terms, a result we will apply later to both $w$ \eqref{w} and $h$ \eqref{h}.

\begin{lem}\label{lem:ignoringperiodic}
Let $H$ be a function of the form
\[ H(k) = h(k) + \epsilon_h(k),\]
where $h$ is a  polynomial of degree $n$ and $\epsilon_h$ is a sum of terms of the form $r(k)\delta(k)$ where $r$ is a polynomial of degree $n-1$ and $\delta(k)$ is periodic with period $m$ and average zero.  Then
\[ \sum_{i} c_i H(k+i) = \sum_i c_i h(k+i) + O(k^{n-4}).\]
\end{lem}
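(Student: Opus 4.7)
The plan is to use discrete Fourier analysis on $\ZZ/m\ZZ$ combined with the special form of the generating polynomial $P(t) := \sum_i c_i t^i = (1 + t + \cdots + t^{m-1})^{p+1}$ coming from the choice \eqref{eq:choiceofci}. By linearity I immediately reduce to the case of a single product $\epsilon_h(k) = r(k)\delta(k)$ with $\deg r \le n-1$ and $\delta$ periodic of period $m$ with average zero.

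First I would Fourier-expand $\delta$: writing $\delta(j) = \sum_{\zeta \in \mu_m} a_\zeta \zeta^j$, the mean-zero hypothesis forces $a_1 = 0$, so only non-trivial $m$-th roots of unity $\zeta$ contribute. Second, expand $r(k+i)$ as a polynomial in $i$ with $k$-dependent coefficients, $r(k+i) = \sum_{l=0}^{n-1} q_l(k)\, i^l$ where $\deg q_l \le n-1-l$. Combining these,
\[
\sum_i c_i\, r(k+i)\,\delta(k+i) \;=\; \sum_{\zeta \ne 1} a_\zeta\, \zeta^k \sum_{l=0}^{n-1} q_l(k)\,\bigl(\theta^l P\bigr)(\zeta),
\]
where $\theta := t\,d/dt$, using the identity $\sum_i c_i\, i^l \zeta^i = (\theta^l P)(\zeta)$.

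The crux is that $1 + t + \cdots + t^{m-1} = (t^m-1)/(t-1)$ vanishes simply at every $\zeta \in \mu_m \setminus \{1\}$, so $P$ vanishes there to order $p+1$. Consequently $(\theta^l P)(\zeta) = 0$ for every $l \le p$, and only terms with $l \ge p+1$ contribute to the sum above. For those terms, $q_l(k)$ has degree at most $n-2-p$; with $p = 5$ this gives a bound of $O(k^{n-7})$, which is certainly $O(k^{n-4})$.

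The only conceptual hurdle is spotting the Fourier interpretation: the weights $c_i$ have been engineered so that $P$ annihilates the non-trivial Fourier modes of any mean-zero period-$m$ function, together with a large number of their weighted moments $\sum_i c_i i^l \zeta^i$. Once this is observed the remainder is routine polynomial bookkeeping, and the only parameter one needs to track is how many derivatives of $P$ vanish at each $\zeta$ versus the degree of $r$.
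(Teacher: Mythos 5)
Your proof is correct and is essentially the paper's argument in Fourier-dual form: the identity $\sum_i c_i i^l\zeta^i=(\theta^l P)(\zeta)=0$ for $l\le p$ at each nontrivial $m$th root of unity $\zeta$ is exactly the content of the paper's claim that $\sum_{i\equiv u}c_i i^l$ is independent of $u$, and pairing against the mean-zero $\delta$ (which you do by expanding $\delta$ into characters, the paper by summing over residue classes mod $m$) kills the top coefficients in $k$. Your bookkeeping in fact yields the sharper bound $O(k^{n-2-p})$, of which the stated $O(k^{n-4})$ is the special case obtained by using only vanishing to order $4$.
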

\begin{proof}
First we claim that if $0\le p\le 3$ then $\sum_{i\equiv u} c_i i^p$ is independent of $u$.    To see this, let $m=\ord(X)$ and observe that by \eqref{eq:choiceofci},  $\sum_i c_i t^i$ has a root of order at least 4 at every non-trivial $m$th root of unity.  Thus if  $\sigma^m=1$ with $\sigma\neq 1$ then $\sum_i c_i i^p \sigma^{ri}=0$ for $1\le r\le m-1$.  So given any $u$,
\[\sum_i i^p c_i = \sum_{r=0}^{m-1} \sigma^{-ru} \sum_i i^p c_i \sigma^{ri}=\sum_i i^p c_i\left(\sum_{r=0}^{m-1}\sigma^{(i-u)r}\right)=m\sumskip{i^p c_i},\]
which proves the claim.

We have to show that $\sum_i c_ir(k+i) \delta(k+i) = O(k^{n-4})$. By
the claim,
\begin{eqnarray*}
  \sum_i c_i  i^p  \delta(k+i) &=& \sum_{u=1}^m \sum_{i\equiv u-k \text{ mod }m } c_i  i^p \delta(u) \\
&=&\frac{1}{m} \sum_{u=1}^m \delta(u) \sum_i c_i i^p\ =0
\end{eqnarray*}
for $0\le p\le 3$. Hence the $k^d,\ldots,k^{d-3}$ terms in $\sum_i c_i (k+i)^d \delta(k+i)$
vanish, and the sum is $O(k^{n-4})$ if $d\le n$.
The result for general polynomials $r$ follows by linearity.
\end{proof}

\section{Orbifold version of Donaldson's theorem}

To recall the general setup, let $h$ be a hermitian metric on $L$ with positive curvature $2\pi\omega$ and for $k\gg 0$ consider the $\Hilb(h,\omega)$ metric on $\oplus_i H^0(L^{k+i})$ from \eqref{eq:definitionofHilb}.  From the embedding $X\subset \PP(\oplus_i H^0(L^{k+i})^*)$ we produced in Definition \ref{def:balanced} a hermitian matrix $M(X) = M_k(X)$ (and defined the embedding to be balanced at level $k$ when $M_k(X)$ vanishes). Using the norm  $\|A\|^2 = \tr(AA^*)$ on hermitian matrices, the following is the key estimate.

\begin{thm}\label{thm:boundonbalancedmatrix}
  There is a constant $C$ such that
$$\lVert M_k(X) \rVert \le Ck^{\frac{n-2}{2}}\lVert \Scal(\omega)-\,\overline{\!S}\rVert_{L^2} + O(k^{\frac{n-4}{2}}),$$
where the $L^2$-norm is taken with respect to the volume form determined by $\omega$.
\end{thm}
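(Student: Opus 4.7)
\bigskip

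The plan is to reduce everything to a Toeplitz-operator Hilbert-Schmidt estimate via Lemma \ref{lem:Mincoordinates} and the precise $C^2$-expansion of $h_{FS,k}/h$ proved in Theorem \ref{thm:expansionFS}. First I would take $|\cdot|_V := \Hilb(h,\omega)$, so the basis $\{t^i_\alpha\}$ in Lemma \ref{lem:Mincoordinates} is $\Hilb(h,\omega)$-orthonormal and the subtracted $c_i\vol\,\delta_{\alpha\beta}$ is exactly $\int_X (t^i_\alpha,t^i_\beta)_h\,\omega^n/n!$. Writing $t^i_\alpha = \sqrt{c_i\vol}\,s^i_\alpha$ with $\{s^i_\alpha\}$ orthonormal in the underlying $L^2(h,\omega)$-metric, Lemma \ref{lem:Mincoordinates} collapses to
\[(M^i(X))_{\alpha\beta} = \frac{c_i\vol}{2}\int_X (s^i_\alpha,s^i_\beta)_h\left(\frac{h_{FS,k}^{k+i}}{h^{k+i}}\cdot\frac{\omega_{FS,k}^n}{\omega^n}-1\right)\frac{\omega^n}{n!}.\]

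Next, I would feed in the expansion of Theorem \ref{thm:expansionFS}. Taking $(k+i)$-th powers of $h_{FS,k}/h = 1+\tfrac{\,\overline{\!S}-\Scal(\omega)}{2}k^{-2}+O(k^{-3})$ produces $1+\tfrac{\,\overline{\!S}-\Scal(\omega)}{2k}+O(k^{-2})$, and combined with $\omega_{FS,k}^n/\omega^n=1+O(k^{-2})$ from \eqref{eq:limitFSmetric} the bracket becomes $\tfrac{\,\overline{\!S}-\Scal(\omega)}{2k}+R_k$ with $\|R_k\|_{C^0}=O(k^{-2})$. Thus $(M^i(X))_{\alpha\beta}$ splits as $\tfrac{c_i\vol}{4k}(T^i_\phi)_{\alpha\beta}+\tfrac{c_i\vol}{2}(T^i_{R_k})_{\alpha\beta}$, where $\phi:=\,\overline{\!S}-\Scal(\omega)$ and $T^i_\psi$ denotes the Toeplitz operator on $H^0(L^{k+i})$ with symbol $\psi$ in the $L^2$-orthonormal basis $\{s^i_\alpha\}$.

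The crucial input is the Hilbert-Schmidt bound $\|T^i_\psi\|_{HS}^2\le \sup_X K_i(x,x)\,\|\psi\|^2_{L^2}$ for real $\psi$, where $K_i(x,y)=\sum_\alpha s^i_\alpha(x)\otimes\overline{s^i_\alpha(y)}$ is the Bergman kernel. This follows by expanding
\[\tr\!\bigl((T^i_\psi)^2\bigr) = \iint_{X\times X}\psi(x)\psi(y)\,|K_i(x,y)|^2\,\frac{\omega^n(x)}{n!}\frac{\omega^n(y)}{n!},\]
applying $|\psi(x)\psi(y)|\le\tfrac12(\psi(x)^2+\psi(y)^2)$ with symmetry, and invoking the reproducing identity $\int_X|K_i(x,y)|^2\omega^n(y)/n!=K_i(x,x)$. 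The diagonal value $K_i(x,x)=\sum_\alpha|s^i_\alpha|^2_h$ is $O(k^n)$ uniformly in $x$ and in the finite index set of $i$, either by the pointwise expansion \eqref{another} applied to nonnegative summands or by the standard orbifold Bergman kernel asymptotic of \cite{ross_thomas:weigh_bergm_kernel_orbif}.

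Assembling the pieces, and using $\|R_k\|_{L^2}=O(k^{-2})$,
\[\|M(X)\|^2 \le \sum_i\Bigl(\tfrac{c_i\vol}{4k}\Bigr)^{\!2}\!O(k^n)\|\phi\|^2_{L^2}+\sum_i\Bigl(\tfrac{c_i\vol}{2}\Bigr)^{\!2}\!O(k^n)\,O(k^{-4}) = O(k^{n-2})\|\phi\|^2_{L^2}+O(k^{n-4}),\]
and a square root yields the stated estimate. The main obstacle is delicate: the $k^{-2}$ gain in Theorem \ref{thm:expansionFS} is almost eaten by the $(k+i)$-th power, leaving only $O(k^{-1})$ after raising, so the $C^2$-precision of that theorem is essential to ensure $R_k$ is genuinely $O(k^{-2})$ and the remainder $O(k^{(n-4)/2})$ comes out sharp. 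A secondary technical point is that the uniform Bergman kernel bound $\sup_x K_i(x,x)=O(k^n)$ must hold uniformly over the finite range of $i$, which is precisely guaranteed by the weighted expansion in the companion paper.
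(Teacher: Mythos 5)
Your proposal is correct, and its skeleton matches the paper's: both start from Lemma \ref{lem:Mincoordinates} with the $\Hilb(h,\omega)$-orthonormal basis, insert the $C^0$ part of the expansion of Theorem \ref{thm:expansionFS} together with $\omega_{FS,k}=\omega+O(k^{-2})$, split $M_k(X)$ into a leading piece with symbol $(\,\overline{\!S}-\Scal(\omega))/2k$ plus an $O(k^{-2})$ remainder, and control both using the uniform bound $\sum_{i,\alpha}c_i|s^i_\alpha|^2_h=O(k^n)$. Where you genuinely differ is the step converting this pointwise data into a bound on $\norm{M_k(X)}$: the paper notes that the norm is unitarily invariant and picks the $\Hilb$-orthonormal basis so that each block $M^i$ is \emph{diagonal}, after which only diagonal entries need estimating, each by Cauchy--Schwarz against $|t^i_\alpha|^2_h$; you instead keep all entries and use the Toeplitz Hilbert--Schmidt bound $\norm{T^i_\psi}^2\le\sup_X K_i(x,x)\,\norm{\psi}^2_{L^2}$, proved via the off-diagonal kernel and the reproducing identity (which here is just $L^2$-orthonormality plus Fubini, so it is equally valid on orbifolds). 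The two routes consume the same analytic input --- in the end only the on-diagonal Bergman function enters --- and yield the same bound $Ck^{(n-2)/2}\norm{\Scal(\omega)-\,\overline{\!S}}_{L^2}+O(k^{(n-4)/2})$; the paper's diagonalisation is shorter and entirely entrywise, while your Toeplitz formulation is basis-free, standard in the quantisation literature, and avoids having to re-choose the basis adapted to $M$ (it would also give operator-norm variants). Two cosmetic points: when squaring the sum of the two pieces, either apply the triangle inequality first or accept the factor from $(a+b)^2\le 2a^2+2b^2$ to absorb the cross term; and your remainder $R_k$ depends on $i$ through the exponent $k+i$, which is harmless since $i$ runs over a fixed finite set.
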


\begin{proof}
  To ease notation we write $M= M_k(X)$.  Since $\|M\|$ is unchanged by a unitary transformation we may pick $\Hilb(h,\omega)$-orthonormal coordinates
$\{t_{\alpha}^i\}$ such that $M = \oplus M^i$ with each $M^i$ diagonal.  Thus $M^i$ has entries \eqref{lem:Mincoordinates}
\[ M^i_{\alpha\alpha} = \frac{1}{2}\left( \int_X |t^i_\alpha|_{h_{FS,k}}^2 \frac{\omega^n_{FS,k}}{n!}-c_i\vol\right)=\frac{1}{2}\left(\int_X |t^i_{\alpha}|_h^2 \frac{h_{FS,k}^{k+i}}{h^{k+i}} \frac{\omega_{FS,k}^n}{n!}-c_i\vol\right)\!,\]
where $h_{FS,k}$ and $\omega_{FS,k}$ are the induced Fubini-Study metrics.  Using the expansion of $h_{FS,k}/h = 1 + (\,\overline{\!S}-S)/2 k^{2}+O(k^{-3})$ of Theorem
\ref{thm:expansionFS} we can write $M = A+B$ where $B^i_{\alpha\alpha}= O(k^{-2})$ and
\begin{align*}
 A^i_{\alpha\alpha} &=\frac{1}{2}\left(\int_X |t^i_{\alpha}|_h^2 \left(1 + \frac{\,\overline{\!S}-S}{2k}\right)\frac{\omega^n}{n!} -c_i\vol\right)= \frac{1}{4k}\int_X |t^i_{\alpha}|_h^2 (\,\overline{\!S}-S)\frac{\omega^n}{n!}\,.
\end{align*}
Here we have used $\|t^i_{\alpha}\|^2_{L^2}=c_i\vol$ from the definition of the
$\Hilb(h,\omega)$-norm.   Using the Cauchy-Schwarz inequality,
\begin{eqnarray*}
  | A^i_{\alpha\alpha}|^2&\le& \frac{1}{16k^2}\int_X |t^i_{\alpha}|_h^2 \frac{\omega^n}{n!} \int_X |t^i_{\alpha}|_h^2  |\,\overline{\!S}-S|^2 \frac{\omega^n}{n!} \\
&\le& \frac{C'}{k^2} \int_X |t^i_{\alpha}|_h^2  |\,\overline{\!S}-S|^2 \frac{\omega^n}{n!},
\end{eqnarray*}
for some constant $C'$.  Thus from the weak form of the expansion $\sum_i \sum_{\alpha} |t^i_{\alpha}|_h^2 = O(k^{n})$,
\begin{eqnarray*}
 \|A\|^2 &\le& \frac{C'}{k^2}\int_X \sum_{i,\alpha}  |t^i_{\alpha}|_h^2  |\,\overline{\!S}-S|^2 \frac{\omega^n}{n!}\\
&\le & C''k^{n-2} \|S-\,\overline{\!S}\|_{L^2}^2.
\end{eqnarray*}

Therefore $\|M\| \le \|A\| + \|B\| \le Ck^{\frac{n-2}2}\|S-\,\overline{\!S}\|_{L^2}
+\|B\|$, where $C=\sqrt{C''}$. But $B$ is diagonal with $O(k^n)$ entries of size $O(k^{-2})$, so $\|B\|^2 = O(k^{n-4})$.
\end{proof}

Now let $(\mathcal X,\mathcal L)$ be a nontrivial test configuration for $(X,L)$, embedded
in $\PP(V)\times \mathbb C$ (with $V=\oplus_i H^0(X_0,L_0^{k+i})^*$) as before, induced by a $\mathbb C^*$-action on $\PP(V)$ that takes $X$ to the limit $X_0$. Suppose that
$L$ has a metric $h$ with positive curvature $2\pi\omega$, inducing the Hilb$(h,w)$-metric
on $\oplus_iH^0(X,L^{k+i})$. Applying \cite[Lemma 2]{donaldson:05:lower_bound_calab_funct} to each of the spaces $H^0(X,L^{k+i})$, we get a metric on $V$ such that the induced $S^1$-action is unitary. Therefore the infinitesimal generator $A^{k+i}$ of the induced action on $H^0(L_0^{k+i})$ is hermitian. We set
\begin{equation} \label{defA}
A:=\bigoplus_iA^{k+i}.
\end{equation}
As in Section \ref{ERR}, we get a hamiltonian $H_A$ for the $S^1$-action on $\PP(V)$ by contracting its vector field on the circle bundle
$S(\O_{\PP(V)}(1))$ with the connection 1-form whose curvature is $\omega_{FS}$. It is
\[ H_A\colon\PP(V)\to\R, \qquad H_A([v])=\frac1c\sum_i \lambda^{2(k+i)}(v)\langle
A^{k+i}v_{k+i},v_{k+i}\rangle,\]
using the inner product $\langle\cdot,\cdot\rangle$ on $V$ and $\lambda$ as defined in \eqref{eq:defoflambda}. This differs from our usual hamiltonian $m_A=\tr(mA)$ of
\eqref{eq:momementmapweightedprojcspace} by the additive constant $\sum_ic_i\dim V^{k+i}$,
and by the multiplicative factor $\frac1c$. (The latter scaling compensates for the fact
that $m_A$ is the hamiltonian for $c\omega_{FS}$; see Definition \ref{def:FS}.)

By Proposition \ref{prop:leadingtermweight}, then, the polynomial part of the total weight
of the $\C^*$-action on $V^*$ is $w(k) = b_0k^{n+1} + b_1k^n$, where $b_0=\int_{X_0}
H_A \frac{\omega_{FS}^n}{n!}$. From this we can define the Futaki invariant $F_1(\mathcal X,\mathcal L)$ of
the test configuration $(\mathcal X,\mathcal L)$ as in Definition \ref{def:futaki}.

\begin{thm}
In the set-up as above, suppose that $\omega$ has constant scalar curvature. Then $F_1
(\mathcal X,\mathcal L)\ge0$.
\end{thm}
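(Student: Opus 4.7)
The approach I would take is to follow Donaldson's proof for manifolds \cite{donaldson:05:lower_bound_calab_funct}, feeding in our weighted Bergman asymptotics where appropriate. Consider the non-compact orbit $X_t := e^{-tA}\cdot X\subset\PP(V)$, with the sign of $A$ chosen so that $\mathcal X_0=\lim_{t\to\infty}X_t$. The first step is to establish monotonicity of $\phi(t):=\tr(M(X_t)A)$: differentiating via the moment-map-on-Hilbert-scheme calculation \eqref{mmcalc} with tangent vector $-Jv_A$ yields $\phi'(t)=\|v_A\|^2_{L^2(X_t,\omega_{FS})}\ge 0$, and hence the key inequality
\[\tr(M(\mathcal X_0)\,A)\ \ge\ \tr(M(X)\,A).\]

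Next I would bound the right-hand side by Cauchy--Schwarz in the Hilbert--Schmidt norm: $|\tr(M(X)A)|\le \|M(X)\|\cdot\|A\|$. The cscK hypothesis $\Scal(\omega)=\,\overline{\!S}$ together with Theorem \ref{thm:boundonbalancedmatrix} gives $\|M(X)\|=O(k^{(n-4)/2})$, while applying the squared-weight asymptotic \eqref{squaredweight} to each summand $H^0(\mathcal X_0,L_0^{k+i})$ gives $\|A\|^2=\sum_i\tr((A^i)^2)=O(k^{n+2})$. Combining, $|\tr(M(X)\,A)|=O(k^{n-1})$.

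For the left-hand side, I would substitute the explicit moment map \eqref{eq:momementmapweightedprojcspace}, use the defining relation \eqref{eq:defoflambda} of $\lambda$ to rewrite $\sum_i\lambda^{2(k+i)}\langle A^iv_i,v_i\rangle = cH_A$, and identify $\int_{\mathcal X_0}H_A\,\omega_{FS}^n/n!$ with the leading weight coefficient via Proposition \ref{prop:leadingtermweight} (the sign being fixed by the duality $V=\bigoplus_i H^0(L_0^{k+i})^*$). Expressing $\tr(A^i)$ as the polynomial part of the weight on $V^{k+i}$ and expanding $c=\sum_i c_i(k+i)h^0(L_0^{k+i})$, the two contributions to $\tr(M(\mathcal X_0)A)$ each produce an $a_0b_0\sum_i c_i(k+i)^{n+1}$ piece; these cancel. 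What remains is
\[
\tr(M(\mathcal X_0)\,A)\ =\ \frac{a_0\,b_1-a_1\,b_0}{2}\sum_i c_i(k+i)^n+O(k^{n-1})\ =\ \tfrac12\,a_0^2\Big(\!\sum_i c_i\Big)F_1\,k^n+O(k^{n-1}).
\]
Combined with the bound from Step 2 and divided by $k^n$, this gives $F_1\ge -O(k^{-1})$, and letting $k\to\infty$ yields $F_1\ge 0$.

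The delicate point will be making the cancellation in Step 3 fully rigorous: one must carefully pin down the sign convention between the Hamiltonian $H_A$ on $\PP(V)$ and the weight function on $H^0(L_0^{k+i})$ (the dualisation introduces a sign flip), and one must control the many non-polynomial error terms coming from orbifold Riemann--Roch. This last concern is precisely why the specific weighting constants $c_i$ from \eqref{eq:choiceofci} were built: by Lemma \ref{lem:ignoringperiodic} the weighted sums suppress periodic corrections down to $O(k^{n-4})$, safely within the $O(k^{n-1})$ error margin of Step 2.
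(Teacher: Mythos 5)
Your proposal follows the paper's own proof step for step: monotonicity of $\tr(M_k(X_t)A)$ along the orbit via the moment-map calculation \eqref{mmcalc}, the Cauchy--Schwarz bound $|\tr(M_k(X)A)|\le\norm{M_k(X)}\,\norm{A}$ with $\norm{M_k(X)}=O(k^{(n-4)/2})$ from Theorem \ref{thm:boundonbalancedmatrix} in the cscK case and $\norm{A}^2=O(k^{n+2})$ from \eqref{squaredweight}, and the identification of the central-fibre trace with a multiple of $F_1k^n$ up to $O(k^{n-1})$ using Proposition \ref{prop:leadingtermweight} and Lemma \ref{lem:ignoringperiodic}. So the approach is the same and the skeleton is sound.

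However, with the paper's conventions both of your oriented statements are reversed, and your conclusion survives only because the two sign slips cancel. The sign of $A$ is not free: it is fixed by the test configuration (replacing $A$ by $-A$ flips $F_1$), and with the moment map \eqref{eq:momentmapu}, \eqref{eq:momementmapweightedprojcspace} the derivative computed from \eqref{mmcalc} along the flow $\exp(sA)$ is $\Omega(Jv_A,v_A)>0$, so $\tr(M_k(\exp(sA)X)A)$ \emph{increases towards the general fibre}; since $\mathcal X_0$ sits at $s\to-\infty$, the correct inequality is $\tr(M_k(X)A)\ge\tr(M_k(\mathcal X_0)A)$, the opposite of your Step 1. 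Correspondingly, the central-fibre term expands as $\tr(M_k(\mathcal X_0)A)=ca_0\bigl(-k^{-1}F_1+O(k^{-2})\bigr)\approx -a_0^2\bigl(\sum_ic_i\bigr)F_1k^{n}$, again opposite in sign to your Step 3 (the overall factor $\tfrac12$ is immaterial). The correct chain is therefore an upper bound
$-C\,F_1k^n+O(k^{n-1})=\tr(M_k(\mathcal X_0)A)\le\tr(M_k(X)A)\le\norm{M_k(X)}\,\norm{A}=O(k^{n-1})$ with $C>0$,
which gives $F_1\ge -O(k^{-1})$ and hence $F_1\ge0$ as $k\to\infty$. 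Since the whole argument hinges on getting this orientation right -- exactly the ``delicate point'' you flag yourself -- you should pin both signs down against \eqref{eq:momentmapu} and the definition of $w(k)$ in Definition \ref{def:futaki}, rather than leaving them to a choice of sign for $A$.
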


\begin{proof}
In the notation above, set $s=\log t$ and let $X_t=\exp(sA).X$ denote the fibre of the given test configuration over $t\in\C$, with central fibre $X_0$ the limit of $\exp(sA).X$
as $s\to-\infty$.

For fixed $k$, $\tr(M_k(X_s)A)$ is an increasing function of $s\in\R$,
because $\tr(M(X)A)$ is a hamiltonian for the action of $\exp(sA)$ on the space of
sub-orbifolds of $\PP(V)$. Explicitly, substituting $v=Jv_A$ into
\eqref{mmcalc} shows that the derivative of $\tr(M_k(X_s)A)$ is $\Omega(Jv_A,v_A)>0$.  Therefore
$$
\tr(AM_k(X))\ =\ \tr(AM_k(X_1))\ \ge\ \lim_{s\to-\infty}
\tr(AM_k(X_t))\ =\ \tr(AM_k(X_0)).
$$
Recalling the definition of $M_k(X)$ \eqref{def:balanced}, this gives
\begin{eqnarray} \nonumber
\|A\|\|M_k(X)\|\ \ge\ \int_{X_0}m_A\frac{\omega^n_{FS}}{n!} &=& 
\int_{X_0}cH_A\frac{\omega^n_{FS}}{n!}-\vol\sum_i c_i \tr(A^{k+i}) \\
&=& cb_0-a_0\sum_i c_iw(H^0(L^{k+i})),\label{eq:momentmapinequality}
\end{eqnarray}
by Proposition \ref{prop:leadingtermweight}. Here we are writing 
$h^0(L^k) = a_0 k^n + a_1 k^{n-1} + \widetilde o(k^{n-1})$ and 
$w(H^0(L^k))=b_0k^{n+1}+b_1k^n+\widetilde o(k^n)$. Lemma \ref{lem:ignoringperiodic}
then gives
\begin{eqnarray*}
c=\sum_i c_i (k+i)h^0(L^{k+i}) &=& \tilde{a}_0k^{n+1}+\tilde{a}_1 k^n+O(k^{n-1}),\\
\text{and}\quad \sum_i c_iw(H^0(L^{k+i})) &=& \tilde{b}_0 k^{n+1} + \tilde{b}_1 k^n + O(k^{n-1}), 
\end{eqnarray*}
where
\begin{eqnarray*}
\tilde{a}_0=a_0 \sum_i c_i  \ &\text{and}& \ \tilde{a}_1=\sum_i c_i (a_0i(n+1) + a_1), \\
\tilde{b}_0=b_0 \sum_i c_i  \ &\text{and}& \ \tilde{b}_1=\sum_i c_i (b_0i(n+1) + b_1).
\end{eqnarray*}

Therefore \eqref{eq:momentmapinequality} becomes
\begin{eqnarray*}
  \norm{A}\norm{M_k(X)} &\ge& c \left(b_0- a_0\frac{\tilde{b}_0k^{n+1} + \tilde{b}_1 k^n + O(k^{n-1})}{\tilde{a}_0k^{n+1} + \tilde{a}_1 k^n + O(k^{n-1})}\right)\\
&=&  ca_0 \left(k^{-1}\frac{\tilde{b}_0\tilde{a}_1-\tilde{b}_1\tilde{a}_0}{\tilde{a}_0^2}+ O(k^{-2})\right)\\
&=&  ca_0\left(k^{-1}\frac{b_0a_1-b_1a_0}{a_0^2}+ O(k^{-2})\right) \\
&=&  ca_0\left(-k^{-1}F_1+ O(k^{-2})\right).
\end{eqnarray*}
Now $\|A\|^2=|\tr A^2|=O(k^{n+2})$ by \eqref{squaredweight}, and $c$
is strictly of order $O(k^{n+1})$. So Theorem
\ref{thm:boundonbalancedmatrix} now gives
$$
k^{\frac{n-2}{2}}\lVert \Scal(\omega)-\,\overline{\!S}\rVert_{L^2} + O(k^{\frac{n-4}{2}})
\ge Ck^{\frac n2}\left(-k^{-1}F_1+ O(k^{-2})\right)
$$
for some constant $C>0$. Hence when $\Scal(\omega)$ is constant (and therefore equal to
$\,\overline{\!S}$) we see that $F_1\ge 0$.
\end{proof}

\begin{cor} \label{cor}
Let $(X,L)$ be a polarised orbifold with cyclic stabiliser groups. If $X$ admits an orbifold K\"ahler metric $\omega\in\mathcal K(c_1(L))$ with constant scalar curvature then $(X,L)$
is K-semistable.
\end{cor}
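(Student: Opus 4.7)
The plan is to deduce the corollary directly from the preceding theorem; almost no new work is needed, since the hard analytic content (Theorem \ref{thm:boundonbalancedmatrix} and the weighted Bergman kernel expansion) has already been used to show $F_1(\mathcal X,\mathcal L)\ge 0$ for any test configuration realised by a unitary $\C^*$-action on a weighted projective space embedding. By the definition of K-semistability we must check $F_1\ge 0$ for every test configuration $(\mathcal X\to \C,\mathcal L)$ with general fibre $(X,L)$, so the only remaining task is to verify that every such test configuration fits into the framework of the preceding theorem.

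First I would invoke the realisation of abstract test configurations inside weighted projective space, already discussed in the excerpt immediately after the definition of test configuration: by orbi-ampleness of $\mathcal L$ and a choice of trivialisation, for $k\gg 0$ we get an equivariant embedding $\mathcal X\hookrightarrow\PP(V)\times\C$ with $V=\bigoplus_i H^0(\mathcal X_0,\mathcal L_0^{k+i})^*$, pulling $\O_\PP(1)$ back to $\mathcal L$, such that the test configuration arises from the diagonal $\C^*$-action on $\PP(V)\times\C$ induced by the linear $\C^*$-action on the central fibre. Since the general fibre is $(X,L)$, we may identify $V^*\cong\bigoplus_i H^0(X,L^{k+i})$ as vector spaces.

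Next, given the cscK metric $\omega\in\mathcal K(c_1(L))$, choose a hermitian metric $h$ on $L$ with curvature $2\pi\omega$ and form the metric $\Hilb(h,\omega)$ on $\bigoplus_i H^0(X,L^{k+i})$ from \eqref{eq:definitionofHilb}. Applying \cite[Lemma 2]{donaldson:05:lower_bound_calab_funct} separately on each graded piece $H^0(X,L^{k+i})$, we obtain a compatible graded hermitian metric on $V$ for which the induced $S^1$-action is unitary; the infinitesimal generators $A^{k+i}$ are then hermitian and we set $A=\bigoplus_i A^{k+i}$ as in \eqref{defA}. This places us exactly in the setup of the theorem preceding the corollary.

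Finally I would apply that theorem: constancy of $\Scal(\omega)$ gives $F_1(\mathcal X,\mathcal L)\ge 0$ for every nontrivial test configuration, and $F_1=0$ for the trivial test configuration $X\times\C$ with trivial action (where $w(k)\equiv 0$). Thus $(X,L)$ is K-semistable. The only subtlety is that the theorem requires a nontrivial test configuration in order to invoke the strict positivity of $\Omega(Jv_A,v_A)$ used in the monotonicity argument; however the trivial test configuration contributes $F_1=0$, so K-semistability in the sense of Definition \ref{def:futaki} follows in either case. There is no real obstacle here, as all analytic heavy lifting—Theorem \ref{thm:expansionFS}, Theorem \ref{thm:boundonbalancedmatrix}, Lemma \ref{lem:ignoringperiodic}, and Proposition \ref{prop:leadingtermweight}—has already been assembled.
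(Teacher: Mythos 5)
Your proposal is correct and follows exactly the route the paper intends: the corollary is an immediate consequence of the preceding theorem, since that theorem (together with the realisation of any test configuration via a $\C^*$-action on $\PP(\oplus_i H^0(\mathcal X_0,\mathcal L_0^{k+i})^*)$ and the unitarisation via \cite[Lemma 2]{donaldson:05:lower_bound_calab_funct}) already gives $F_1\ge 0$ for every nontrivial test configuration, and the trivial one contributes $F_1=0$. Your remark about the trivial configuration is a reasonable piece of bookkeeping that the paper leaves implicit, but it is not a substantive difference in approach.
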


\chapter{Slope stability of orbifolds}\label{sec:slope}

To get examples where K-stability obstructs the existence of constant scalar curvature metrics we need a supply of test configurations for which we can calculate the Futaki invariant.  To this end we briefly describe the notion of slope stability. The detailed
descriptions in \cite{ross_thomas:06:obstr_to_exist_const_scalar,ross_thomas:07:study_hilber_mumfor_criter_for} extend easily from manifolds to orbifolds
with a few minor changes.
\medskip

Fix an $n$-dimensional polarised orbifold $(X,L)$ and a sub-orbischeme (or substack)
$Z\subset X$: an invariant subscheme $Z_U$ in each orbifold chart $U\to U/G\subset X$
such that for
each injection of charts $U'\into U$, the subscheme $Z_{U'}$ is the scheme-theoretic
intersection $Z_U\cap U'$. In most of our examples $Z$ will be smooth but with generic
stabilisers. Working equivariantly in charts one can produce a new orbischeme,
the blowup $\pi\colon \hat{X}\to X$ of $X$ along $Z$. Locally this is the blow up of
$U$ in $Z_U$ divided by the induced action of the Galois group on this blow up.
The exceptional divisors glue to give an orbifold exceptional divisor $E\subset \hat{X}$.

For large $N$, $\pi^*L^N(-E)$ is positive. (From now on we will suppress $\pi^*$.)
Thus we can define the \emph{Seshadri constant}
by
\[ \epsilon_{\orb}(Z)= \text{sup}\left\{x\in \mathbb Q_+ \colon (L(-xE))^M \text{ is ample for some } M\in\mathbb N\right\}.\]
For example, if we put $\ZZ/m\ZZ$ stabilisers along a smooth divisor $D\subset X$
then as in Section \ref{Qdiv} there is a well defined orbi-divisor $D/m$ whose Seshadri constant $\epsilon_{\orb}(D/m)
=m\epsilon(D)$ is $m$-times the usual Seshadri constant of $D$ in the underlying space of $X$.

To get a test configuration from $Z$,  consider the suborbifold $Z\times \{0\}\subset X\times \mathbb C$.  Blowing this up gives the degeneration $\mathcal X\to X\times \mathbb C\to \mathbb C$ to the normal cone of $Z$ with exceptional divisor $P$. As shown in
\cite[Proposition 4.1]{ross_thomas:07:study_hilber_mumfor_criter_for} for schemes (and
the same results go through easily for orbifolds), $\epsilon_{\orb}(Z\times\{0\})=
\epsilon_{\orb}(Z)$.  Let $p\colon \mathcal X\to X$ be the projection.  Then for generic
$c\in(0,\epsilon_{\orb}(Z))\cap\Q$ general integer powers of $\mathcal L_c:=p^*L(-cP)$ define a polarisation of $\mathcal X$.  The natural action of $\mathbb C^*$ on $X\times \mathbb C$ (trivial on $(X,L)$, weight one on $\C$) lifts naturally to a linearised action on  $(\mathcal X,\mathcal L)$, and thus for such $c$ we have a test configuration $(\mathcal X,\mathcal L_c)$ with general fibre $(X,L)$.  The central fibre is $\mathcal X_0 = \hat{X}\cup_E P$ consisting of the blowup $\hat{X}\to X$ along $Z$ glued to $P$ along $E$, and the induced $\mathbb C^*$ action is trivial on $\hat{X}$ and acts by scaling $P$ along the normal to $E$. 

As usual we write
\begin{equation}\label{eq:hilbertpoly}
h^0(L^k) = a_0k^n + a_1k^{n-1} + \tilde o(k^{n-1}),
\end{equation}
and then define the \emph{slope} of $(X,L)$ to be
\[ \mu(X,L) = \frac{a_1}{a_0}=-\frac{n\int_X c_1(K_{\orb}).c_1(L)^{n-1}}{2\int_X c_1(L)^n}\,,\]
by \eqref{orbRR}.
To define the slope of $Z\subset X$, we work on the orbifold blowup $\pi\colon \hat{X}\to X$ along $Z$ with exceptional divisor $E$. Then orbifold Riemann-Roch to $L^k(-\frac
jkE)$ for fixed $j$ (and $k=jK$ for some integer $K$) takes the form
\begin{equation}
h^0(L^k(-jE)) = p(k,j) + \epsilon_p(k,j). \label{eq:weightpoly}
\end{equation}
Here $p$ is a polynomial of two variables of total degree $n$ and $\epsilon_p$ is a sum of terms of the form $r_p\delta'$ where $r_p$ is a polynomial of two variables of total degree $n-1$ and $\delta'=\delta'(k,j)$ is periodic in each variable with average
$\sum_{k,j=1}^M \delta'(k,j)=0$.  Define polynomials $a_i(x)$ by
\[p(k,xk)= a_0(x) k^n + a_1(x) k^{n-1} + O(k^{n-2}) \quad \text{for } kx\in \mathbb N.\]
Then the \emph{slope} of $Z$ (with respect to $c$) is 
\begin{equation}
\mu_c(\I_Z) : = \frac{\int_0^c a_1(x) + \frac{a'_0(x)}{2} dx}{\int_0^c a_0(x) dx}\,.
\label{eq:defofslopeIz}
\end{equation}
The only difference from the manifold case is that we ignored the periodic terms
in the relevant Hilbert functions. This amounts to replacing $K_X$ by $K_{\orb}$.

\begin{defn}
  We say that $(X,L)$ is \emph{slope semistable with respect to $Z$} if 
\[ \mu_c(\I_Z) \le \mu(X) \quad \text{ for all } 0<c<\epsilon_{\orb}(Z).\]
We say that $X$ is \emph{slope semistable} if it is slope semistable with respect to
all sub-orbischemes $Z\subset X$.
\end{defn}

Alternatively, just as in the manifold case \cite[Definition 3.13]{ross_thomas:06:obstr_to_exist_const_scalar}
we can put $\tilde{a}_i(x): = a_i-a_i(x)$
and define the \emph{quotient slope} of $Z$ as
\begin{equation}
\mu_c(\O_Z) : = \frac{\int_0^c \tilde{a}_1(x) + \frac{\tilde{a}'_0(x)}{2} dx}{\int_0^c \tilde{a}_0(x) dx}\,,\label{eq:defofslopez}
\end{equation}
and $(X,L)$ is slope semistable with respect to $Z$ if and only if $\mu(X)\le\mu_c(\O_Z)$
for all $0<c<\epsilon_{\orb}(Z)$.

One can check easily that slope semistability is invariant upon replacing $L$ by a positive power.  The point of these definitions is that the sign of the Futaki invariant of the test configuration given by deformation to the normal cone of $Z$ is the same as the sign of $\mu(X)-\mu_c(\I_Z)$, resulting in the following slope obstruction to stability.

\begin{thm} \label{slopeK}
  If $(X,L)$ is K-semistable then it is slope semistable.
\end{thm}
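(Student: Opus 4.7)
The plan is to imitate the manifold argument of \cite{ross_thomas:06:obstr_to_exist_const_scalar} by using the deformation-to-the-normal-cone test configuration and computing its Futaki invariant directly. Given a sub-orbischeme $Z\subset X$ and rational $c\in(0,\epsilon_{\orb}(Z))$, the test configuration $(\mathcal{X},\mathcal{L}_c)$ described earlier in this section has central fibre $\mathcal{X}_0=\hat X\cup_E P$ with a $\C^*$-action that is trivial on $\hat X$ and scales the normal directions to $E$ in $P$ with weight one. The goal is to verify that
\[
F_1(\mathcal{X},\mathcal{L}_c)\ =\ C\bigl(\mu(X,L)-\mu_c(\I_Z)\bigr)
\]
for some strictly positive constant $C$; then K-semistability $F_1\ge0$ immediately gives $\mu_c(\I_Z)\le\mu(X,L)$ for every such $Z$ and $c$, which is slope semistability.

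First I would record that, by flatness, $h^0(\mathcal{X}_0,\mathcal{L}_c^k)=h^0(X,L^k)$, so the polynomial part of the Hilbert function is unchanged, still $a_0k^n+a_1k^{n-1}$. Next I would compute the total weight $w(k)$ of the $\C^*$-action on $H^0(\mathcal{X}_0,\mathcal{L}_c^k)$. Filtering sections by order of vanishing along $E$ realises this space as a direct sum whose $j$-th weight piece is $H^0(\hat X,L^k(-jE))/H^0(\hat X,L^k(-(j+1)E))$ for $0\le j<kc$, together with a tail supported on $P$. A summation by parts, identical to the manifold case, then expresses $w(k)$ as a weighted sum of the quantities $h^0(L^k(-jE))$ for $0\le j\le kc$. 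Applying orbifold Riemann--Roch \eqref{eq:weightpoly}, the polynomial part of each such term is $a_0(j/k)k^n+a_1(j/k)k^{n-1}$, and summing over $j/k\in[0,c]$ passes to the Riemann integrals $\int_0^c a_0$ and $\int_0^c a_1$, with a small Euler--Maclaurin correction producing the $\tfrac12 a_0'$ term. Substituting the resulting leading coefficients $b_0=\int_0^c a_0(x)\,dx$ and $b_1=\int_0^c\bigl(a_1(x)+\tfrac12 a_0'(x)\bigr)dx$ into $F_1=(a_0b_1-a_1b_0)/a_0^2$ and comparing with \eqref{eq:defofslopeIz} produces the desired proportionality with $C=a_0^{-1}\int_0^c a_0(x)\,dx>0$.

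The main obstacle is the bookkeeping of periodic orbifold corrections. Both the Hilbert function $h^0(X,L^k)$ and the two-variable Hilbert function $h^0(L^k(-jE))$ of \eqref{eq:weightpoly} carry non-polynomial periodic pieces, and one has to check that these do not spoil the extraction of polynomial parts when we sum over $j$ or reconstruct $F_1$. Fortunately, Definition \ref{def:futaki} of the Futaki invariant discards such periodic terms by construction, and the slope $\mu_c(\I_Z)$ in \eqref{eq:defofslopeIz} is likewise built only from the polynomial coefficients $a_i(x)$. The averaging mechanism underlying Lemma \ref{lem:ignoringperiodic}---a periodic function with average zero summed over a full period vanishes---confirms that summation commutes with extraction of polynomial parts up to acceptable lower-order error. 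Thus the manifold-style polynomial calculation transfers verbatim to the orbifold data, and no genuinely new geometric input beyond the manifold argument is required.
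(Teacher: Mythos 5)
Your overall route is the same as the paper's: deformation to the normal cone, the weight-space decomposition of $H^0(\mathcal X_0,\mathcal L_c^k)$ by order of vanishing along $E$ (equivalently along $Z$), summation by parts to express $w(k)$ through the $h^0(L^k(-jE))$, orbifold Riemann--Roch, and the same closed formula $F_1=(\mu(X)-\mu_c(\I_Z))\int_0^c a_0(x)dx/a_0$. The gap is in the one place where the orbifold case genuinely differs from the manifold case, namely the periodic corrections. You cite Lemma \ref{lem:ignoringperiodic}, but that lemma concerns the weighted sums $\sum_i c_iH(k+i)$ over the fixed window $i=0,\dots,M$ with the special weights \eqref{eq:choiceofci}; it says nothing about the sum $\sum_{j=1}^{ck}\epsilon_p(k,j)$ of the two-variable periodic pieces of \eqref{eq:weightpoly}, which is what actually appears in your weight computation. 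Moreover, your assertion that these terms only produce an ``acceptable lower-order error'' is not correct as stated: summing $r_p(k,j)\delta'(k,j)$, with $r_p$ of total degree $n-1$, over the $ck$ values of $j$ produces a contribution of order $k^n$ --- exactly the order of $b_1k^n$, which is the coefficient entering $F_1$. Nor does the slogan ``a periodic function with average zero summed over a full period vanishes'' apply directly: the summand is a polynomial in $j$ times a periodic function, so it is not periodic in $j$, and the average-zero hypothesis is only a double average over both variables, so $\sum_{v=1}^m\delta'(k,v)$ need not vanish for fixed $k$.

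What rescues the argument (and is the content of the paper's Lemma \ref{lem:summingperiodic}, proved for exactly this purpose) is that the order-$k^n$ contribution of the periodic pieces has the special form $r(k)\delta(k)+O(k^{n-1})$ with $r$ a polynomial of degree $n$ and $\delta$ periodic in $k$ with average zero, i.e.\ it lies in the class $\tilde o(k^n)$ that Definition \ref{def:futaki} discards. Establishing this requires a separate argument: write $j=mu+v$, separate variables so the sum becomes $\sum_v\delta'(k,v)\sum_u(um+v)^{n-1}$, observe that the degree-$n$ term in $k$ pairs with $\sum_v\delta'(k,v)$, which is periodic in $k$ with average zero by the double-average hypothesis, treat lower-degree terms by subtracting a constant from the periodic factor, and handle the incomplete final period $j_0<j\le ck$ separately. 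Your proposal needs this lemma (or an equivalent) spelled out; without it, the extraction of the polynomial part $w(k)=\sum_j p(k,j)-ckh(k)+\tilde o(k^n)$, and hence the identification of $F_1$ with the slope difference, is unjustified. A smaller point in the same direction: one should first reduce to $c$ general and, after rescaling $L$, integral, so that $(\mathcal X,\mathcal L_c)$ is an honest test configuration and the sums over $j$ run to the integer $ck$.
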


\begin{proof}
  The argument is essentially the same as that in the smooth case
  \cite[Section 4]{ross_thomas:07:study_hilber_mumfor_criter_for}; only the Riemann-Roch
  formula changes. Since being not slope semi\-stable is an open condition, we may without loss of generality assume that $c<\epsilon_{\orb}(Z)$ is general, and so by rescaling $L$ be may assume that $c$ is integral and coprime to $m$, making $(\mathcal X,\mathcal L_c)$ a test configuration.   The space of sections on the central fibre of the test configuration splits as
\renewcommand{\I}{\mathcal I}
\begin{equation} \label{eq:eigenspace}
H^0_{\mathcal X_0}(\L_c^k)=H^0_X(L^k\otimes\I_Z^{ck})\ \oplus\,\bigoplus_{j=1}^{ck}t^j \frac{H^0_X(L^k\otimes\I_{Z}^{ck-j})}{H^0_X(L^k\otimes\I_Z^{ck-j+1})}\,.
\end{equation}
Here $H^0_X(L^k\otimes\I_Z^j)$ is the space of sections of $L^k$ which vanish to order
$j$ on $Z$ (in an orbi chart). The coordinate $t$ is pulled back from the base $\C$,
and is acted on by $\C^*$ with weight $-1$. Therefore \eqref{eq:eigenspace} is the weight
space decomposition of $H^0_{\mathcal X_0}(\L_c^k)$, with total weight 
\begin{eqnarray*}
w(H^0_{\mathcal X_0}(\L_c^k))  &=& -\sum_{j=1}^{ck} j\Big(h^0_X(L^k\otimes\I_{Z}^{ck-j})-
h^0_X(L^k\otimes\I_Z^{ck-j+1})\Big).
\end{eqnarray*}
Some manipulation, and the vanishing of higher cohomology of the pushdowns of these sheaves
to the underlying scheme, give
\begin{eqnarray*}
\sum_{j=1}^{ck} h_X^0(L^k\otimes \I_Z^j)-ckh_X^0(L^k) &=& \sum_{j=1}^{ck} h^0_{\hat{X}}
(L^k(-jE))-ckh^0(L^k) \\
&=& \sum_{j=1}^{ck} p(k,j) + \epsilon_p(k,j) - ck  [h(k) + \tilde o(k^{n-1})].
\end{eqnarray*}
By Lemma \ref{lem:summingperiodic} below, the periodic terms to not contribute to the top two order parts of this sum, so the leading order polynomial parts of the weight
are
\[ w(k) = \sum_{j=1}^{ck} p(j,k) - ckh(k) + \tilde o(k^n). \]
The calculation of the Futaki invariant is now exactly as in the smooth case
\cite[Proposition 4.14 and Equation 4.19]{ross_thomas:07:study_hilber_mumfor_criter_for},
yielding
$$
F_1(\mathcal X,\mathcal L_c)=(\mu(X)-\mu_c(\I_Z))\frac{\int_0^ca_0(x)dx}{a_0}\,.
$$
This is nonnegative if and only if $X$ is slope semistable with respect to $Z$.
\end{proof}

\begin{lem}\label{lem:summingperiodic}
  Suppose $\delta(k,j)$ is periodic in each variable with period $m$ and average $\sum_{k,j=1}^m \delta(k,j)=0$.  Suppose also that $r(k,j)$ is a polynomial of two variables of total degree $n-1$ and $c$ is a fixed integer.  Then
\[ \sum_{j=1}^{ck} r(k,j) \delta(k,j) =\epsilon(k) + O(k^{n-1}),\]
where $\epsilon(k)$ is a sum of terms of the form $r(k)\delta'(k)$, with $r$ a polynomial of degree $n$ and $\delta'$ periodic of average zero.
\end{lem}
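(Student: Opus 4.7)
The plan is a Fourier decomposition in both variables. Since $\delta(k,j)$ is periodic with period $m$ in each of $k$ and $j$, I would expand
\[\delta(k,j) = \sum_{s,t=0}^{m-1} a_{s,t}\, \zeta^{sk+tj}, \qquad \zeta = e^{2\pi i/m},\]
with the average-zero hypothesis forcing $a_{0,0}=0$. Substituting and exchanging the order of summation,
\[\sum_{j=1}^{ck} r(k,j)\delta(k,j) = \sum_{s,t=0}^{m-1} a_{s,t}\, \zeta^{sk} \sum_{j=1}^{ck} r(k,j)\zeta^{tj},\]
and I would split the analysis according to whether $t=0$.

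When $t=0$, the inner sum is simply $\sum_{j=1}^{ck} r(k,j)$, which by Faulhaber's formula applied monomial by monomial (using that $c\in\ZZ$, so $ck$ is an integer) is a polynomial $q(k)$ in $k$ of degree at most $n$. The $(s,t)=(0,0)$ contribution vanishes because $a_{0,0}=0$, and the remaining terms collect into $q(k)\sum_{s=1}^{m-1} a_{s,0}\zeta^{sk}$. This is exactly of the form $r(k)\delta'(k)$ demanded by the statement, since $\sum_{s\neq 0} a_{s,0}\zeta^{sk}$ is periodic in $k$ of period $m$ with average zero over one period.

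For $t\ne 0$, I would show the inner sum is $O(k^{n-1})$, so the contribution is absorbed into the error. By linearity it suffices to treat a monomial $r(k,j)=k^a j^b$ with $a+b\le n-1$. Since $\zeta^t\ne 1$, the partial sums $S_M := \sum_{j=1}^M \zeta^{tj}$ are uniformly bounded by $2/|1-\zeta^t|$, and Abel summation gives
\[\sum_{j=1}^{ck} j^b\zeta^{tj} = (ck)^b S_{ck} - \sum_{M=1}^{ck-1} S_M\bigl((M+1)^b - M^b\bigr) = O(k^b),\]
so the whole contribution is $k^a\cdot O(k^b) = O(k^{n-1})$. Combining the two cases yields the decomposition claimed by the lemma. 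The only mildly subtle point is the Abel summation estimate producing cancellation at the nonzero $j$-frequencies; the rest is Fourier bookkeeping and polynomial summation.
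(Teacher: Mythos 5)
Your proof is correct, but it takes a genuinely different route from the paper's. You expand $\delta$ in characters of $(\ZZ/m\ZZ)^2$, use the average-zero hypothesis only to kill the constant mode $a_{0,0}$, and then split by $j$-frequency: the $t=0$ modes contribute a Faulhaber polynomial of degree at most $n$ in $k$ (here you correctly use that $c$ is an integer, so the upper limit $ck$ is integral) times $\sum_{s\neq 0}a_{s,0}\zeta^{sk}$, which is periodic of average zero --- in fact it equals $\frac1m\sum_{v=1}^m\delta(k,v)$, so it is automatically real; the $t\neq 0$ modes are $O(k^{n-1})$ by Abel summation against the bounded partial sums of $\zeta^{tj}$. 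The paper instead argues entirely in ``physical space'': it reduces by linearity to $r=j^{n-1}$, sets $j_0=\lfloor ck/m\rfloor m$, writes $j=mu+v$ so that periodicity in $j$ pulls out $\delta(k,v)$, sums the polynomial over each arithmetic progression (the floor producing the periodic-in-$k$ corrections), notes that the unique degree-$n$ coefficient is $\sum_{v=1}^m\delta(k,v)$, which has average zero by hypothesis, and disposes of the lower-degree pieces $P(k)\epsilon(k)$ with $\deg P\le n-1$ by subtracting the mean of $\epsilon$ and absorbing the constant part into $O(k^{n-1})$; the boundary range $j_0<j\le ck$ has at most $m$ terms and is treated the same way. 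Both arguments isolate the same leading object, namely $\sum_v\delta(k,v)$ multiplying a degree-$n$ polynomial; yours obtains the error term from exponential-sum cancellation and is conceptually tidy, while the paper's is more elementary (no summation by parts) and makes explicit where the floor-function and periodic corrections enter, in the same style as the other $\tilde o$ bookkeeping in the paper.
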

\begin{proof}
  By linearity it is sufficient to consider the case where $r(k,j)=j^{n-1}$.    Set $j_0=\Lfloor \frac{ck}{m}\Rfloor m$ and split the sum into two pieces depending on whether $j\le j_0$ or $j\ge j_0+1$.    In the first case, writing $j=mu+v$,
\[\sum_{j=0}^{j_0} j^{n-1}\delta(k,j) = \sum_{v=1}^m \delta(k,v) \sum_{u} (um+v)^{n-1}.\]
This splits into pieces of the form $P(k)\epsilon(k)$ where $\epsilon(k)$ is periodic and $P$ is a polynomial of degree at most $n$, with the degree being equal to $n$ only when $\epsilon(k) = \sum_{v=1}^m \delta(v,k)$ in which case $\epsilon$ has average zero.  Then note that if $P$ has degree $n-1$ there is a constant $a$ so that $\epsilon-a$ has average zero, and $P\epsilon = P(\epsilon-a) + aP = P(\epsilon-a) + O(k^{n-1})$.  Thus, after some rearrangement, this part of the sum is of the form claimed.  The sum for $j\ge j_0+1$ immediately splits into terms of the form $P(k)\epsilon(k)$ where $P$ has degree at most $n-1$ and $\epsilon$ is periodic, so by the same argument these terms are also of the required form.
\end{proof}

We can calculate the slope of (sufficiently nice) suborbifolds much as in the manifold
case. For instance let $\hat{X}\stackrel{\pi}{\to} X$ be the orbifold blowup along a smooth $Z$ of codimension $r\ge 2$, with orbifold exceptional divisor $E$. Then
$K_{\orb,\hat{X}} = \pi^* K_{\orb,X} + (r-1)E$, and
\begin{eqnarray}\label{eq:airroch}
  a_0(x) &=& -\frac{\int_{\hat{X}}c_1(L(-xE))^n}{n!}\,,\nonumber\\
  a_1(x) &=& -\frac{\int_{\hat{X}} c_1(K_{\orb,\hat{X}})c_1(L(-xE))^{n-1}}{2(n-1)!}\,.
\end{eqnarray}
So the formulae only differ from those in \cite{ross_thomas:06:obstr_to_exist_const_scalar} in replacing $K_X$ by $K_{\orb}$. For example if $Z$ is as small as possible -- the invariant
subvariety defined by a reduced fixed point upstairs in an orbifold chart -- then these
quickly imply
\begin{equation}
\mu_c(\O_Z) = \frac{n(n+1)}{2c}\,.\label{eq:slopeofapoint}
\end{equation}
Notice the order of the stabiliser group at this point does not feature; however it enters
into the Seshadri constant of $Z$ and so does affect slope stability.

Similarly if $Z$ is an orbifold divisor in an orbifold surface then
$$
\mu_c(\O_Z) = \frac{3(2L.Z - c(K_{\orb}.Z + Z^2)}{2c(3L.Z-cZ^2)}\,.
$$

\chapter{Applications and further examples}\label{chap:examples}

\section{Orbifold Riemann surfaces}\label{sec:orbifoldriemannsurfaces}

By an orbifold Riemann surface we mean an orbifold of complex dimension one. This is equivalent to the data of a Riemann surface of genus $g\ge 0$ and $r$ points
$p_1,\dots,p_r\in X$ marked by orders of stabiliser groups $m_1,\ldots,m_r\ge 2$. We
assume $r\ge1$.

\begin{thm}\label{thm:slope_orbifold_riemann_surface}
A polarised Riemann surface $(X,L)$ is slope semistable if and only if
  \begin{equation}
 2g + \sum_{i=1}^r \left(\frac{m_i-1}{m_i}\right)\ge  2\max_{i=1,\ldots,r}\left\{\frac{m_i-1}{m_i}\right\}\label{eq:slope_orbifold_riemann_surfacesemi}
\end{equation}
\end{thm}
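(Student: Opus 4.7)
My plan is to test slope semistability against the simplest possible sub-orbischemes: reduced invariant points in orbifold charts. Since $\dim X = 1$, every proper sub-orbischeme of $X$ is zero-dimensional, and the problem should reduce to a uniform inequality running over the possible stabiliser orders at a single point.

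Orbifold Riemann--Roch \eqref{orbRR} in dimension one gives
$$h^0(L^k) \;=\; \deg(L)\,k \;-\; \tfrac12\deg K_{\orb} \;+\; \tilde o(1),$$
so, using Example~\ref{Kample},
$$\mu(X,L) \;=\; -\,\frac{\deg K_{\orb}}{2\deg L}\,, \qquad \deg K_{\orb} \;=\; 2g-2+\sum_{i=1}^r\!\left(1-\tfrac{1}{m_i}\right).$$
Now fix a point $p\in X$ with stabiliser of order $m_p$ (so $m_p=m_i$ if $p=p_i$ and $m_p=1$ otherwise) and let $Z$ be the reduced invariant point at $p$ in an orbifold chart. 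By \eqref{eq:slopeofapoint} with $n=1$ the quotient slope is $\mu_c(\O_Z)=1/c$, and by the identity $\epsilon_{\orb}(D/m)=m\,\epsilon(D)$ of Section~\ref{sec:slope} applied to the orbi-divisor $\tfrac{1}{m_p}p$, the Seshadri constant is $\epsilon_{\orb}(Z)=m_p\deg L$. Since $1/c$ is strictly decreasing, the condition $\mu(X,L)\le\mu_c(\O_Z)$ for every $c\in(0,\epsilon_{\orb}(Z))$ is equivalent to its limiting case $\mu(X,L)\le 1/\epsilon_{\orb}(Z)$, which simplifies to
$$\deg K_{\orb}\ \ge\ -\frac{2}{m_p}\,.$$

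Taking the sharpest inequality over all $p\in X$ (which occurs at any orbi-point $p_i$ maximising $m_i$) yields $\deg K_{\orb}\ge -2/\max_i m_i$. Substituting the expression for $\deg K_{\orb}$ and rearranging produces exactly \eqref{eq:slope_orbifold_riemann_surfacesemi}, and for non-orbifold points the weaker bound $\deg K_{\orb}\ge -2$ is automatically implied. To close both directions of the equivalence one still has to verify that no thickened or disconnected sub-orbischeme gives a strictly stronger obstruction. This is the step I expect to be the main (if modest) difficulty: on a curve every such $Z$ is a finite configuration of points with multiplicities and orbi-structure, and one has to check via the formulae \eqref{eq:defofslopeIz}--\eqref{eq:defofslopez} that the contributions from distinct components decouple in the weight polynomial of the normal-cone degeneration, and that multiplicity at a single point scales $\epsilon_{\orb}(Z)$ and $\mu_c(\O_Z)$ compatibly, so that the resulting slope inequality is implied by the reduced single-point test above. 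This is routine bookkeeping with $a_0(x),a_1(x)$ and uses nothing beyond orbifold Riemann--Roch, but it is the one place where care is needed.
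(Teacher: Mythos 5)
Your forward direction (slope semistability implies the inequality) is correct and is exactly the paper's argument: you test against the reduced orbi-point of maximal stabiliser order, using $\mu_c(\O_Z)=1/c$ from \eqref{eq:slopeofapoint} with $n=1$ and $\epsilon_{\orb}=m_1\deg L$, and the arithmetic leading to \eqref{eq:slope_orbifold_riemann_surfacesemi} checks out. The gap is the converse: since the theorem is an equivalence, the statement that the inequality forces semistability with respect to \emph{every} sub-orbischeme is half of the content, and you defer it as ``routine bookkeeping'' without carrying it out. Moreover the shape you predict for that bookkeeping -- a decoupling of contributions from distinct components, plus a compatibility between how multiplicity scales $\epsilon_{\orb}(Z)$ and $\mu_c(\O_Z)$ -- is not quite the right mechanism, so as written the plan does not yet close the argument.

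The actual point, which is how the paper finishes, is simpler and more uniform than a component-by-component analysis. On an orbifold curve every proper nonempty sub-orbischeme $Z$ is an effective orbi-divisor of some degree $q\in\Q_+$, and necessarily $q\ge 1/m_1$ where $m_1=\max_i m_i$. Orbifold Riemann--Roch \eqref{orbRR} applied to $L^k(-jZ)$ gives $a_0(x)=\deg L-qx$ and $a_1(x)=-\tfrac12\deg K_{\orb}$, hence $\tilde a_0(x)=qx$, $\tilde a_1(x)=0$, and by \eqref{eq:defofslopez}
\[
\mu_c(\O_Z)\;=\;\frac{\int_0^c \tfrac{q}{2}\,dx}{\int_0^c qx\,dx}\;=\;\frac1c\,,
\]
\emph{independently} of $q$, of any multiplicities, and of the orbifold structure of $Z$; so no decoupling argument is needed because the quotient slope simply does not see $Z$ at all in dimension one. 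The only quantity that varies is the Seshadri constant, $\epsilon_{\orb}(Z)=\deg L/q\le m_1\deg L$, so thickened or disconnected $Z$ only shrink the test interval and give a \emph{weaker} condition. Thus semistability with respect to all $Z$ reduces to the single inequality $1/(m_1\deg L)\ge\mu(X,L)$, which is precisely \eqref{eq:slope_orbifold_riemann_surfacesemi}. With this one computation inserted, your argument is complete and coincides with the paper's proof.
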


\begin{proof}
   The orbifold canonical bundle of $X$ is
$K_{\orb} = K_X + \sum_{i=1}^r \left(1-\frac{1}{m_i}\right)p_i$
so the slope of $X$ is
\begin{equation} \label{orbslopeX}
\mu(X,L) = -  \frac{\deg K_{\orb}}{2 \deg L} =  \frac{1 -g- \frac{1}{2}\sum_{i=1}^r \left(1-\frac1{m_i}\right)}{\deg L}\,.
\end{equation}
Without loss of generality assume $m_1\ge m_2\ge \dots \ge m_r$.   Let $Z=\{p_1/m\}$
be the orbifold point of order $m_1$ with a reduced lift upstairs.  Then
$$\mu_c(\O_Z) = c^{-1} \quad\text{and}\quad \epsilon_{\orb}(Z,X,L) = m_1 \deg L.$$
Now if $(X,L)$ is semistable then  $\mu_{\epsilon}(\O_Z)\ge \mu(X),$  so
$$ \frac{1}{m_1}\ge 1-g - \frac{1}{2}  \sum_{i=1}^r \frac{m_i-1}{m_i}$$
which rearranges to give the  inequality \eqref{eq:slope_orbifold_riemann_surfacesemi}.

For the converse suppose that \eqref{eq:slope_orbifold_riemann_surfacesemi} holds and consider an orbifold subspace $Z\subset X$.  This $Z$ is an orbifold divisor whose degree is a rational number $q\ge \frac{1}{m_1}$.  Thus $\epsilon:=\epsilon_{\orb}(Z,L)=\frac1q
\deg L \le m_1 \deg L$.   As $\mu_c(\O_Z)=c^{-1}$ is decreasing with respect to $c$, we get $ \mu_c (\O_Z) \ge \epsilon^{-1}\ge(m_1\deg L)^{-1}$. But \eqref{eq:slope_orbifold_riemann_surfacesemi} implies that this is greater than or equal to $\mu(X,L)$ \eqref{orbslopeX}, so $X$ is slope semistable.
\end{proof}

\begin{rmk}\label{rmk:unstablep1}
  \begin{enumerate}
  \item It is hard to either violate or achieve equality in the inequality \eqref{eq:slope_orbifold_riemann_surfacesemi}.
  Either would imply that $2g+0\le2\max(1-\frac1{m_i})<2$
  and so $g=0$. Then since each integer $m_i\ge 2$, we find there are only three cases
  in which an orbifold Riemann surface is not strictly slope stable:
\begin{enumerate}
\item $g=0,\ r=1$ (this gives $\PP(1,m)$),
\item $g=0,\ r=2,\ m_1\neq m_2$ (giving $\PP(m_1,m_2)$ if $\hcf(m_1,m_2)=1$),
and
\item $g=0,\ r=2,\ m_1=m_2$.
\end{enumerate}
In the first two cases $(X,L)$ is not slope semistable and so not cscK.  In the third case $(X,L)$ is actually slope polystable, as we now describe. The only way in which $\mu_c(\O_Z)=\mu(X)$ can occur in the proof of \eqref{thm:slope_orbifold_riemann_surface} is if  $Z=\{p_1/m_1\}$
or $Z=\{p_2/m_2\}$ and $c=\epsilon(Z)=2$. In this case deformation to the normal cone $\mathcal X,\mathcal L_c)$ has $\mathcal L_c$ only semi-ample, pulled back from the contraction
of the proper transform of the central fibre $X\times\{0\}$. This contraction is in fact
a product configuration $X\times\C$ (with a nontrivial $\C^*$-action).
\item The stability condition \eqref{eq:slope_orbifold_riemann_surfacesemi} is actually a special case of \eqref{ex:orbiprojectivespace} and thus an manifestation of the index obstruction which we discuss below.
  \end{enumerate}
\end{rmk}

Slope stability of orbifold Riemann surfaces fits perfectly into the known theory of orbifold cscK metrics which has been studied by several authors including  Picard \cite{picard:05:de_egrat_de_equat_deltau}, McOwen \cite{mcowen:88:point_singul_confor_metric_rieman_surfac} and Troyanov \cite{troyanov:89:metric_const_curvat_a_spher,troyanov:91:presc_curvat_compac_surfac_with_conic_singul}.   In the terminology of this paper Troyanov's results can be paraphrased as follows:

\begin{cor}[Troyanov]
Let $(X,L)$ be a polarised orbifold Riemann surface.  Then $c_1(L)$ admits an orbifold cscK metric if and only if it is slope polystable.
\end{cor}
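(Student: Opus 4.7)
The plan is to combine the two directions of the orbifold Yau--Tian--Donaldson correspondence already proved in the paper with the explicit classification of polarised orbifold Riemann surfaces arising from Theorem~\ref{thm:slope_orbifold_riemann_surface} and Remark~\ref{rmk:unstablep1}, and to supply the existence of metrics from Troyanov's analytic theorem. The algebraic content of the corollary has been almost entirely extracted already; what remains is a careful case-by-case matching of two short lists.

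For the forward direction, suppose $c_1(L)$ admits an orbifold cscK metric. Corollary~\ref{cor} gives K-semistability of $(X,L)$, and Theorem~\ref{slopeK} upgrades this to slope semistability, i.e.\ the inequality \eqref{eq:slope_orbifold_riemann_surfacesemi}. To promote slope semistability to slope polystability I would appeal to Remark~\ref{rmk:unstablep1}: the only cases in which \eqref{eq:slope_orbifold_riemann_surfacesemi} fails or holds with equality are the three spherical configurations $(g,r)=(0,1)$, $(g,r)=(0,2)$ with $m_1\ne m_2$, and $(g,r)=(0,2)$ with $m_1=m_2$. The first two are excluded by slope semistability. In the third (the symmetric football $\PP(m,m)$) the only equality $\mu_c(\O_Z)=\mu(X,L)$ occurs at $Z=\{p_i/m_i\}$ and $c=\epsilon_{\orb}(Z)$, and Remark~\ref{rmk:unstablep1} already identifies the corresponding degeneration as a product test configuration, so polystability holds.

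For the converse, suppose $(X,L)$ is slope polystable. Then \eqref{eq:slope_orbifold_riemann_surfacesemi} holds and we fall into one of the three geometric regimes: (i) $g\ge 1$, (ii) $g=0$ with $r\ge 3$, or (iii) the symmetric football $(g,r)=(0,2)$ with $m_1=m_2=m$. In case (iii) the underlying orbifold is $\PP^1/(\ZZ/m\ZZ)$ under $[x,y]\mapsto[\zeta x,\zeta^{-1}y]$, and the $S^1$-invariant round Fubini--Study metric on $\PP^1$ descends to give an orbifold cscK metric representing $c_1(L)$. In cases (i) and (ii) I would invoke Troyanov's theorem, which constructs a metric of constant Gauss curvature with prescribed cone angles $2\pi/m_i$ whenever \eqref{eq:slope_orbifold_riemann_surfacesemi} is satisfied, the sign of the constant being governed by the orbifold Euler characteristic $2-2g-\sum_i(1-1/m_i)$.

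The main obstacle is precisely Troyanov's existence theorem, which is the serious analytic input: uniformisation in the hyperbolic case $\chi_{\orb}<0$, a centre-of-mass argument in the flat case $\chi_{\orb}=0$, and a delicate variational/Moser--Trudinger argument in the spherical case $\chi_{\orb}>0$. Everything on the algebraic side amounts to bookkeeping, matching the teardrop $\PP(1,m)$ and asymmetric football $\PP(m_1,m_2)$ exceptions of Troyanov term-by-term with the slope-unstable orbifold Riemann surfaces of Remark~\ref{rmk:unstablep1}, and recognising $\PP(m,m)$ as a polystable orbit carrying a cscK representative preserved by a nontrivial $\C^*$-action.
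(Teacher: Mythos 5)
Your proposal is correct and follows essentially the same route as the paper: cscK $\Rightarrow$ K-semistable (Corollary \ref{cor}) $\Rightarrow$ slope semistable (Theorem \ref{slopeK}), combined with the classification in Theorem \ref{thm:slope_orbifold_riemann_surface} and Remark \ref{rmk:unstablep1}, Troyanov's existence theorems in the strictly stable cases, and the Fubini--Study metric descending to $\PP^1/(\ZZ/m\ZZ)$ in the polystable-but-not-stable football case. The only (immaterial) difference is that you argue the forward direction directly via the semistable-implies-polystable case analysis, whereas the paper phrases it contrapositively.
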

\begin{proof}
The main theorems in \cite{troyanov:91:presc_curvat_compac_surfac_with_conic_singul} imply that $X$ admits a cscK metric when strict inequality holds in
\eqref{eq:slope_orbifold_riemann_surfacesemi} i.e. as long as $(X,L)$ is slope stable. (To compare our notation with Troyanov's set $\theta_i =\frac{2\pi}{m_i}$ and $\chi_{\orb}
=-\deg K_{\orb}$. Then this is Theorem A in \cite{troyanov:91:presc_curvat_compac_surfac_with_conic_singul} when $\chi_{\orb}<0$, Proposition 2 when $\chi_{\orb}=0$ and Theorem C when $\chi_{\orb}>0$.) The only way that $(X,L)$ can be slope polystable and not slope stable is case c): if $g=0,r=2$ and $m_1=m_2$. In this case $X$ is the global quotient $\PP^1/(\ZZ/m\ZZ)$ with
orbifold cscK metric descended from the Fubini-Study metric on $\PP^1$.

For the converse,  if $(X,L)$ is not slope polystable then $g=0$ and either a) $r=1$ or b) $r=2$ and $m_1\neq m_2$.  In these two cases $(X,L)$ is not slope semistable, which
by Corollary \ref{cor} and Theorem \ref{slopeK} implies that $X$ does not admit an orbifold cscK metric.
\end{proof}

\begin{rmk}
The statement that if $g=0$ and a) $r=1$ or b) $r=2$ and $m_1\neq m_2$ then $X$ does
not admit a cscK metric has also been proved by Troyanov \cite[Theorem I]{troyanov:89:metric_const_curvat_a_spher}.
Troyanov's work applies much more generally to cone angles not necessarily of the form $2\pi/m$, and
this is also studied further in \cite{chen:98:extrem_hermit_metric_rieman_surfac,chen_li:95:what_kinds_singul_surfac_can,luo92:_liouv_equat_and_spher_convex_polyt}.  We hope to return to cone angles in $2\pi\Q_+$ using the method described in the Introduction.
\end{rmk}

\section{Index obstruction to stability}\label{sec:index}

Recall that the index $\ind(X)$ of a Fano manifold $X$ is defined to be the largest integer $r$ such that $K^{-1}_{X}$ is linearly equivalent to $r D$ for some Cartier divisor $D\subset X$, and it is well known that if $X$ is smooth then $\ind(X) \le n+1$ with equality if and only if $X=\PP^n$.   By contrast for an Fano orbifold $(X,\Delta)$ it is possible that $K^{-1}_{\orb}\cong\O(rD)$ where $D$ is an orbi divisor and $r\in\mathbb N$ is larger than $n+1$. We will show that this prevents $(X,\Delta)$ from being K-stable. In fact
the same is true under the weaker condition that $K_{\orb}^k\cong\O(krD)$ for some $k\in\mathbb N$ and $n+1\le r\in\mathbb Q$.

\begin{thm}(Index Obstruction)
Let $(X,K^{-1}_{\orb})\supset D$ be a Fano orbifold and an orbi divisor. Suppose that $K^{-k}_{\orb}\cong\O(kr D)$ for some $k>0$ and $r\in\Q_+$. If $r >n+1$  then $(X,K^{-1}_{\orb})$ is slope unstable, and thus does not admit an orbifold K\"ahler-Einstein metric.
\end{thm}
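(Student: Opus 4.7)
The plan is to take $D$ itself as a destabilising suborbischeme, exploiting the $\Q$-linear equivalence $L \equiv rD$ where $L := K_{\orb}^{-1}$.  Two preliminary computations: the slope formula combined with $c_1(K_{\orb}) = -c_1(L)$ gives $\mu(X, L) = n/2$, and the hypothesis $c_1(L) = r\,c_1(D)$ shows that $c_1(L-xD) = (1-x/r)c_1(L)$ is positive iff $x < r$, so $\epsilon_{\orb}(D) = r$.

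Since $D$ has codimension one, the orbifold blow-up along $D$ is trivial, so $\hat X = X$ with $E = D$ and $K_{\orb,\hat X} = K_{\orb,X}$.  Orbifold Riemann-Roch applied to $L^k(-xkE)$, together with $L\equiv rD$, yields
\[ a_0(x) = (1-x/r)^n\,a_0, \qquad a_1(x) = (1-x/r)^{n-1}\,a_1,\]
where $a_1/a_0 = n/2$.  Plugging into \eqref{eq:defofslopeIz} and integrating produces the closed form
\[ \mu_c(\I_D) = \frac{(n+1)(r-1)}{2r} \cdot \frac{1-(1-c/r)^n}{1-(1-c/r)^{n+1}}\,,\]
which as $c \nearrow r = \epsilon_{\orb}(D)$ tends to $(n+1)(r-1)/(2r)$.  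An elementary rearrangement shows this exceeds $n/2 = \mu(X,L)$ precisely when $r > n+1$.

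Therefore, when $r > n+1$ one has $\mu_c(\I_D) > \mu(X,L)$ for $c$ sufficiently close to $\epsilon_{\orb}(D)$, so $(X,L)$ is slope unstable.  Theorem \ref{slopeK} then implies it is not K-semistable, and Corollary \ref{cor} rules out an orbifold cscK metric in $c_1(K_{\orb}^{-1})$, which on this Fano polarisation is the same as a K\"ahler-Einstein metric.  The only mild obstacle I foresee is bookkeeping: a priori $D$ may be only a $\Q$-orbi-divisor, but $krD$ is a genuine orbi-line bundle, and replacing $D$ by any positive integer multiple (with a corresponding rescaling of $c$) leaves the slope inequality invariant, so the codimension-one slope formalism of Chapter \ref{sec:slope} applies directly.
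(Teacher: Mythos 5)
Your proposal is correct and follows essentially the same route as the paper: destabilise with $D$ itself, compute $\mu(X,L)=n/2$ and $\epsilon_{\orb}(D)=r$, and compare slopes, the only cosmetic difference being that you evaluate $\mu_c(\I_D)$ for $c$ near $r$ while the paper computes the equivalent quotient slope $\mu_r(\O_D)=\frac{(n+1)((n-1)r+1)}{2nr}$ at $c=r$. Your closed form for $\mu_c(\I_D)$ and its limit $\frac{(n+1)(r-1)}{2r}$ check out, so no gap.
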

\begin{proof}
Set $L=K^{-1}_{\orb}$.  Using \eqref{eq:airroch} to calculate the slope,
\begin{eqnarray*}
a_0(x) &=& \frac{1}{n!}\int_X (c_1(L)-xc_1(D))^n = \frac{1}{n!}(r-x)^n \int_X c_1(D)^n,\\
a_1(x) + \frac{ a_0'(x)}{2} &=&  -\frac{1}{2(n-1)!} (r-1)(r-x)^{n-1} \int_X c_1(D)^n.
\end{eqnarray*}
Now $a_0 = \frac{r^n}{n!} \int_X c_1(D)^n$ and $a_1 =
\frac{r^n}{2(n-1)!} \int_X c_1(D)^n$, so $\mu(X,L) = \frac{n}{2}$.
The Seshadri constant of $D$ is $r$.  Using the definition of the slope
\eqref{eq:defofslopez},
\[\mu_r(\O_D) = \frac{(n+1)((n-1)r+1)}{2nr}\,,\]
which is less that $\mu(X)=n/2$ if and only if $r>n+1$.
\end{proof}

\begin{rmk}
At the level of K\"ahler-Einstein metrics the analogous result has already
been proved by Gauntlett-Martelli-Sparks-Yau using the ``Lichnerowicz obstruction" to the existence of Sasaki-Einstein metrics with non-regular Reeb vector fields
\cite[Section 2.2]{jerome:obstr_to_exist_sasak_einst_metric}.  In fact it was their work
that
originally motivated this paper.  In the same paper the authors discuss the ``Bishop
obstruction" which we have been unable to interpret in terms of stability.
\end{rmk}

\begin{example}(Weighted projective space) Consider weighted projective space
$\WPP=\PP(\lambda_0,\ldots,\lambda_n)$, with $\lambda_0\le \lambda_1\le \ldots\le \lambda_n$
\emph{not all equal}. Then $\{x_0=0\}$ defines an effective divisor in $\O(\lambda_0)$, while $K^{-1}_{\orb}\cong\O(\sum_i \lambda_i)$. Since $\sum_i \lambda_i>(n+1)\lambda_0$ the index obstruction shows that
$\WPP$ is unstable, recovering the well known fact that it does not admit an orbifold
cscK metric.
\end{example}

\begin{example}(Orbifold projective space)\label{ex:orbiprojectivespace}\ 
Let $X=\PP^n$ and take $n+2$ hyperplanes $H_1,\ldots,H_{n+2}$ in general
position, and integers $m_i\ge 2$. Setting
$$\Delta = \sum_{i=1}^{n+2} \left(1-\frac{1}{m_i}\right) H_i,$$
we consider the orbifold $(\PP^n,\Delta)$.
Then $K^{-1}_{\orb}=K^{-1}_{\PP^n}(-\Delta)$ becomes equivalent after passing to powers
to
\begin{equation} \label{bigbrackets}
\O\bigg(\!n+1  - \sum_{i=0}^{n+2} \Big(1- \frac{1}{m_i}\Big)\bigg) =\O\bigg(\!\!-1 +
\sum_{i=1}^{n+2} \frac{1}{m_i}\bigg).
\end{equation}
Thus $(\PP^n,\Delta)$ is a Fano orbifold as long as $\sum_{i=1}^{n+2} \frac{1}{m_i}>1$.

The right hand side of \eqref{bigbrackets} can be written
$$
\O\bigg(m_j\bigg(\!\!-1 +
\sum_{i=1}^{n+2} \frac{1}{m_i}\bigg)D_j\bigg),
$$
where $D_j=\frac{1}{m_j}H_j$ is an orbi divisor.
Thus by the index obstruction, if $(X,\Delta)$ is a semistable Fano orbifold then
\begin{equation}
 \sum_{i=1}^{n+2} \frac{1}{m_i} \le  1 + (n+1) \min_{1\le i\le n+2}\left(\frac{1}{m_i}\right).
 \label{eq:stabilityofpnhyperplane}
\end{equation}
\end{example}

\begin{rmk}
  The previous example is considered by Ghigi-Koll\'ar \cite[Example 43]{ghigi_kollar:kaehl_einst_metric_orbif_einst_metric_spher}.  They show that as long as 
\[ 1< \sum_{i=1}^{n+2} \frac{1}{m_i} <  1 + (n+1) \min_{1\le i\le n+2}\left(\frac{1}{m_i}\right)\]
then $(X,\Delta)$ admits a K\"ahler-Einstein metric.   Thus the previous example suggests this condition is strict (our slightly weaker inequality comes from only having a proof that a cscK metric implies semistability rather than polystability).   We remark that Ghigi-Koll\'ar also prove a much more general condition under which a K\"ahler-Einstein Fano manifold with boundary divisor $\Delta$ yields a K\"ahler-Einstein orbifold $(X,\Delta)$ \cite[Theorem 41]{ghigi_kollar:kaehl_einst_metric_orbif_einst_metric_spher}.  It is not the case that this condition is simply the index obstruction, and we have not been able to determine if this condition is related to slope stability or if it is also strict.
\end{rmk}

\section{Orbifold ruled surfaces}

Let $(\Sigma,L)$ be a polarised orbifold Riemann surface and 
$\pi\colon E\to \Sigma$ be an orbifold vector bundle of rank $r$.   Then $\PP(E)$ is itself naturally an orbifold: on a chart $U\to U/G$ of $\Sigma$, the $G$ action on $E|_U$ induces an action on $\PP(E|_U)$ (which is effective as the action on $U$ is) and these give orbifold charts on $\PP(E)$. Suppose that the $G$-action on the fibres $E$ over points of $\Sigma$ with stabiliser group $G$ has distinct eigenvalues, so that $\PP(E)$ has codimension two orbifold
locus and all fibres are finite quotients of $\PP^{r-1}$. The hyperplane bundle
$\O_{\PP(E)}(1)$ is both locally ample and relatively ample, so $L_m:=\O_{\PP(E)}(1)\otimes \pi^*L^m$ is ample for $m$ sufficiently large.

We claim that stability of $\PP(E)$ is connected to stability of the underlying bundle $E$.  Here stability of a bundle is to be taken in the sense of Mumford, so define
\[\mu_E:= \frac{\deg E}{\rk E}\] where the degree is taken in the orbifold sense. Then
$E$ is defined to be stable if for all orbifold bundles $F$ with a proper injection
$F\subset E$ we have $\mu_F<\mu_E$.

Now if $F\subset E$ then $\PP(F)$ is a suborbifold of $\PP(E)$.
Using $\pi_* \O_{\PP(E)}(k) = S^k E^*$ one can use orbifold Riemann-Roch to compute the slope of each in exactly the same way as in the manifold case \cite[Section 5.4]{ross_thomas:06:obstr_to_exist_const_scalar}.
The upshot is that the Seshadri constant of $\PP(F)$ is $\epsilon_{\orb}(\PP(F))=1$ and
\[\mu_1(\O_{\PP(F)})- \mu(\PP(E)) = C(\mu_E-\mu_F)\Big(rm+(r-1)\mu(\Sigma)-r\mu_E\Big)\] for some $C>0$, where $\mu(\Sigma)=-\deg K_\orb/2\deg L$ is the orbifold slope of $(\Sigma,L)$.
The term inside the last set of brackets is positive for any $m$ sufficiently positive
that $L_m$ is ample (it is essentially the volume of $(\PP(E),L_m)$).
Therefore if $E$ is unstable as an orbifold vector bundle then $(\PP(E),L_m)$
is slope unstable as an orbifold. This result also generalises to higher dimensional base as long as one works near the adiabatic limit of sufficiently large $m$, just as in the manifold case.

If $E$ is polystable then $\PP(E)$ carries an orbifold cscK metric; see for example 
\cite{rollin_singer:05:non_minim_scalar_flat_kaehl}. We therefore get a (partial) converse
-- for strictly unstable bundles, $(\PP(E),L_m)$ does \emph{not} carry an orbifold cscK metric for any $m$. (The discrepancy lies in strictly semistable, but not polystable, bundles.)

In fact Rollin and Singer phrase their results in terms of parabolic bundles, but there is a complete correspondence between orbifold bundles $E$ on $\Sigma$
and parabolic vector bundles $E'$ on the underlying space of $\Sigma$. In the notation
of Section \ref{Qdiv}, the bundle $E'$ is the pushdown of $E$ from the orbifold to its underlying space; this is therefore the vector bundle analogue of rounding down of $\Q$-divisors in the line bundle
case. The information lost is then encoded via the parabolic structure on $E'$ at each of the orbifold points $x$, with rational weights of the form $p_j/\!\ord(x)$ for $p_j<\ord(x)$ corresponding to the weights of the action on $E_x$. See for example \cite[Section 5]{furuta_steer:92:seifer_fibred_homol_3_spher}.
Moreover this correspondence preserves subobjects and their degrees, where the  parabolic degree of $E'$ is defined as \[\operatorname{pardeg} E' = \deg E' + \sum_{x,j} m_{x,j}\frac
{p_j}{\ord(x)}\,.\]
Here the sum is over all orbifold points $x$,  and if the parabolic structure over $x$ is given by the flag $F_0\subset F_1\subset \ldots \subset E'_x$ then $m_{x,j}= \dim F_j/F_{j+1}$.  Thus orbifold stability of $E$ corresponds precisely to the parabolic stability of $E'$.

Rollin and Singer \cite{rollin_singer:05:non_minim_scalar_flat_kaehl} use such orbifold
cscK metrics as a starting point to produce ordinary cscK metrics (with zero scalar curvature,
in fact) on small blow ups of the orbifolds $\PP(E)$, using a gluing method. Our results
suggest that if $E$ is unstable, destabilised by $F$, then one should be able to slope destabilise such blow ups using the pullback (or proper transform) of $\PP(F)$.

\section{Slope stability of canonically polarised orbifolds}

By the orbifold version of the Aubin-Yau theorems, orbifolds which have positive or trivial
canonical bundle admit orbifold K\"ahler-Einstein metrics. Therefore by Corollary \ref{cor}
they are K-semistable, and so by Theorem \ref{slopeK} are also slope semistable. In fact
this can be proved directly. That is, suppose
that $(X,L)$ is a polarised orbifold and either
\begin{enumerate}
\item $K_{\orb}$ is numerically trivial and $L$ is arbitrary \emph{or }
\item $L=K_{\orb}$.
\end{enumerate}
Then $(X,L)$ is slope stable.  The proof is the same as the manifold case (see \cite[Theorem
8.4]{ross_thomas:07:study_hilber_mumfor_criter_for} or
\cite[Theorem 5.4]{ross_thomas:06:obstr_to_exist_const_scalar}),
with $K_X$ replaced by $K_{\orb}$, so we do not repeat it here.


\bibliography{biblio2}
\end{document}